\documentclass[a4paper,11pt]{article}
\usepackage{mathrsfs}
\usepackage{latexsym}
\usepackage{amsmath,amssymb}
\usepackage{amsthm}
\usepackage{amsfonts}
\usepackage[usenames]{color}
\usepackage{amssymb}
\usepackage{graphicx}
\usepackage{amsmath}
\usepackage{amsfonts}
\usepackage{amsthm}
\usepackage{mathrsfs}
\usepackage{dsfont}
\usepackage{indentfirst}
\usepackage{enumerate}
\usepackage{enumitem}
\usepackage{esint}
\usepackage{bbm}
\usepackage[square, comma, sort&compress, numbers]{natbib}

\newtheoremstyle{mythm}{1.5ex plus 1ex minus .2ex}{1.5ex plus 1ex
minus .2ex}{\kai}{\parindent}{\song\bfseries}{}{1em}{}
\numberwithin{equation}{section}

\newtheorem{thm}{Theorem}[section]
\newtheorem{lemma}{Lemma}[section]
\newtheorem{proposition}{Proposition}[section]
\newtheorem{remark}{Remark}[section]
\newtheorem{corollary}{Corollary}[section]
\allowdisplaybreaks[4]
\newcommand\leqs{\leqslant}
\newcommand\geqs{\geqslant}

\makeatletter
\newcommand\Rmnum[1]{\expandafter\@slowromancap\romannumeral #1@}
\makeatother

\title{Gradient Estimates for Hessian Equations with Measure Data}
\author{Rongxun He}

\begin{document}
\date{}
\maketitle

\begin{abstract}
In this article, we establish the interior gradient estimates for $k$-Hessian equations with measure data. We use the ideas from the potential theories for the $p$-Laplacian type equations and pointwise estimates of $k$-subharmonic functions by Labutin. The interior gradient regularity is optimal for $k\geqs3$, while for $k=2$, we prove the interior $W^{2,1+\varepsilon}$ and $C^{1,\alpha}$ estimates for $2$-subharmonic solutions.
\end{abstract}

\section{Introduction}
The notion of $k$-Hessian measure and $k$-subharmonic function in $\mathbb{R}^n$ was introduced by Trudinger and Wang in \cite{TW97measureI}, \cite{TW97measureII} and \cite{TW97measureIII}. Their results provided a framework for the theory of weak solutions of $k$-Hessian equations, extending the classical theory of Riesz measure when $k=1$ and Monge–Ampère measure when $k=n$. Later in \cite{Labutin02potential}, Labutin studied the potential theory for $k$-subharmonic functions analogous to the classical potential theory for subharmonic functions. In this article, we concern the further properties for such functions. More precisely, we establish the interior gradient estimates by means of the natural Wolff potential of certain $k$-Hessian measure. The resulting estimates provide a comprehensive picture concerning the pure interior regularity results for $k$-Hessian equations. Our work is inspired by the gradient potential theory of quasilinear equations (see \cite{DM10potential, DM11potential, DZ24potential, KM13potential, Mingione11potential, NP20potential, NP23potential}); however, some of our results are totally different, since the key estimates in the quasilinear case are not valid for the $k$-Hessian operator.

Let $\Omega\subset\mathbb{R}^n$ and $u\in C^2(\Omega)$. The $k$-Hessian operator $S_k$ is defined by
\begin{align*}
S_k[u]=\sigma_k(\lambda(D^2u))=\sum_{i_1<\cdots<i_k}\lambda_{i_1}\cdots\lambda_{i_k},
\end{align*}
where $\lambda(D^2u)=(\lambda_1,\ldots,\lambda_n)$ are the eigenvalues of Hessian matrix $D^2u$, and $\sigma_k(\lambda)$ denotes the $k$-th elementary symmetric polynomial of $\lambda$. In particular, this is the Laplace operator and the Monge-Ampère operator as $k=1$ and $k=n$, respectively. To work in the realm of elliptic operator, we consider the equation on the standard G\aa rding cone $\Gamma_k$,
\begin{align*}
\Gamma_k=\{\lambda\in\mathbb{R}^n:\sigma_j(\lambda)>0,j=1,\ldots,k\}.
\end{align*}
Clearly, $\Gamma_n\subset\cdots\subset\Gamma_1$. To formulate admissibility in a weak sense, we adopt the definition based on viscosity solutions, see \cite{TW97measureII} and \cite{Labutin02potential}. An upper semi-continuous function $u:\Omega\to[-\infty,+\infty)$, not identically $-\infty$ on any component of $\Omega$, is called $k$-\textit{subharmonic} in $\Omega$ if $S_k[q]\geqs0$ for any quadratic polynomial $q$ for which $u-q$ has a finite local maximum in $\Omega$. We denote by $\Phi^k(\Omega)$ the set of all $k$-subharmonic functions in $\Omega$, and for $u\in\Phi^k(\Omega)$ we denote by $\mu_k(u)$ the $k$-Hessian measure of $u$ (see \cite{TW97measureI}, \cite{TW97measureII} and \cite{TW97measureIII}). It is a Borel measure which coincides with the usual density for smooth functions,
\begin{align*}
\mu_k(u)=S_k[u]dx\quad\text{for }u\in C^2(\Omega).
\end{align*}
It was proved in \cite{TW97measureII} that $k$-Hessian measure is weakly continuous with respect to the local $L^1$-convergence. Hence, it is admissible to approximate $k$-subharmonic functions by a decreasing sequence of smooth functions in $\Phi^k(\Omega)$, and the limiting measure is independent of the particular approximating sequence.

For general $k$-subharmonic function $u\leqs0$, the local gradient estimate holds, see \cite{TW97measureII},
\begin{align*}
\Vert Du\Vert_{L^q(\Omega')}\leqs C\int_{\Omega}|u|,
\end{align*}
where $\Omega'\Subset\Omega$ and $q<\frac{nk}{n-k}$. This implies that $\Phi^k(\Omega)\subset W_{loc}^{1,q}(\Omega)$ for $q<\frac{nk}{n-k}$. Moreover, we have the inclusion $\Phi^k(\Omega)\subset C_{loc}^{0,\alpha}(\Omega)$ for $\alpha=2-\frac{n}{k}$ and $k>\frac{n}{2}$, see \cite{TW97measureI}.

When turning to the case $1\leqs k\leqs\frac{n}{2}$, Trudinger\,\cite{Trudinger97weak} first derived the local H\"older estimate for weak admissible solution to $S_k[u]=f$ with $f\in L^p(\Omega)$ for $p>\frac{n}{2k}$. For general $k$-subharmonic functions, Labutin\,\cite{Labutin02potential} proved a pointwise estimate in terms of the Wolff type potential
\begin{align*}
\mathbf{W}^\mu_{\frac{2k}{k+1},k+1}(x,r)=\int_0^r\Big(\frac{\mu(B(x,t))}{t^{n-2k}}\Big)^{1/k}\frac{dt}{t}.
\end{align*}
More precisely, if $u\in\Phi^k(\Omega)$ satisfies $\mu_k(u)=\mu$ and $u\leqs0$, then
\begin{align}
C_1\mathbf{W}^\mu_{\frac{2k}{k+1},k+1}(x,R/8)\leqs|u(x)|\leqs C_2\mathbf{W}^\mu_{\frac{2k}{k+1},k+1}(x,2R)-C_3\sup_{B_R}u,
\label{formu11}
\end{align}
whenever $B(x,3R)\subset\Omega$. In particular, if $\mu(B(x,r))\leqs Mr^{n-2k+\varepsilon}$ holds for all $B(x,8r)\subset\Omega$ with some $M,\varepsilon>0$, then $u$ is locally $\alpha$-H\"older continuous in $\Omega$ with $\alpha=\alpha(n,k,\varepsilon)>0$.

In view of the above results, a natural question is whether we can establish the gradient estimates for $k$-subharmonic functions $u$, by making use of the Hessian measure $\mu_k(u)$. This question was first proposed by Trudinger\,\cite{Trudinger97weak} for $\mu_k(u)=fdx$ with $f\in L^p(\Omega)$, $p>\frac{n}{k}$. In this article, we settle the problem and obtain the optimal interior regularity results for $k$-subharmonic functions. For simplicity, we use the notation $\mathbf{W}^\mu_k(x,r)$ for the Wolff type potential with correct scaling,
\begin{align}
\mathbf{W}^\mu_k(x,r):=\mathbf{W}^\mu_{\frac{k}{k+1},k+1}(x,r)=\int_0^r\Big(\frac{\mu(B(x,t))}{t^{n-k}}\Big)^{1/k}\frac{dt}{t}.
\label{formu12}
\end{align}
\begin{thm}\label{thm11}
Let $2\leqs k\leqs n$, $u\in\Phi^k(\Omega)$ and $\mu_k(u)=\mu$. Then there exists a constant $C=C(n,k)>0$ such that
\begin{align}
|\nabla u(x_0)|\leqs C\fint_{B(x_0,R)}|\nabla u|+C\mathbf{W}_k^\mu(x_0,2R)
\label{formu13}
\end{align}
holds whenever $B(x_0,3R)\subset\Omega$. Moreover, if the Wolff potential $\mathbf{W}_k^\mu(x,r)$ is finite, then $u$ is locally Lipschitz in $\Omega$.
\end{thm}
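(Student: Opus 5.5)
Since $u$ is only $k$-subharmonic, I first reduce to the smooth case. By the weak continuity of $k$-Hessian measures I will take a decreasing sequence $u_j\in\Phi^k\cap C^\infty$ with $u_j\downarrow u$ and $\mu_k(u_j)\to\mu$ weakly. The approximants are built by mollification together with solving Dirichlet problems on small balls, so their measures satisfy $\mu_k(u_j)(B(x,t))\leqs \mu(B(x,t+\delta_j))+o(1)$. Then $\mathbf{W}_k^{\mu_j}(x_0,2R)$ is controlled by $\mathbf{W}_k^{\mu}(x_0,2R+)$, and the gradients converge in $L^1_{loc}$ by the $W^{1,q}$ estimate of \cite{TW97measureII}. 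Thus it suffices to prove \eqref{formu13} for smooth $u$ with $\mu=S_k[u]\,dx$, with a constant independent of smoothness. The Lipschitz statement then follows: if $\mathbf{W}_k^\mu$ is finite (uniformly on compacts), \eqref{formu13} bounds $|\nabla u|$ at every Lebesgue point, using $u\in W^{1,1}_{loc}$.

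The core is a Campanato-type excess decay at dyadic scales $r_i=2^{-i}R$ around $x_0$, mimicking the gradient potential estimates of Duzaar--Mingione. On each ball $B_i=B(x_0,r_i)$ I would compare $u$ with the $k$-subharmonic solution $w$ of $S_k[w]=0$ in $B_i$, $w=u$ on $\partial B_i$. Here I expect to use two ingredients.
\begin{enumerate}[label=(\roman*)]
\item A comparison estimate. The difference $v=w-u\geqs0$ satisfies the pointwise bound \eqref{formu11} applied on $B_i$ (Labutin), which gives a sup bound. I would rather derive an $L^1$ bound of the gradient difference of the form
\begin{align*}
\fint_{B_i}|\nabla u-\nabla w|\leqs C\Big(\frac{\mu(B_i)}{r_i^{n-k}}\Big)^{1/k}.
\end{align*}
Scaling matches, since $u$ has one derivative. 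I would get this from an $L^\infty$ bound $\sup|u-w|\leqs C r_i(\mu(B_i)/r_i^{n-k})^{1/k}$ via the Wolff bound and the comparison principle, followed by an interior gradient estimate for the difference. The difference is not a solution, but $u-w$ is controlled from both sides by $k$-subharmonic functions, namely $u$ and $w$ themselves.
\item Regularity for $k$-harmonic $w$ (homogeneous $k$-Hessian equation). Interior Lipschitz bounds give $\sup_{B_i/2}|\nabla w|\leqs C\fint_{B_i}|\nabla w|$, plus a decay of gradient oscillation $\fint_{B(\tau r)}|\nabla w-(\nabla w)|\leqs C\tau^{\beta}\fint_{B(r)}|\nabla w-(\nabla w)_r|$. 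For $k\geqs2$, $C^{1,\beta}$ of $k$-harmonic functions is not generally available, so I would instead aim only for a gradient sup bound. That is, I would run the iteration on $A_i=\fint_{B_i}|\nabla u|$ using the Lipschitz estimate for $w$ together with the semiconvexity-type one-sided control of $D^2 w$ that $k$-subharmonicity gives ($\Gamma_k\subset\Gamma_1$, so $w$ is subharmonic).
\end{enumerate}

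With these, the iteration reads $A_{i+1}\leqs C\,\sup_{B_{i+1}}|\nabla w|+C\fint|\nabla u-\nabla w|$. Combining it with a small-$\tau$ oscillation version to absorb the leading constant, and summing over $i$, turns $\sum_i(\mu(B_i)/r_i^{n-k})^{1/k}$ into $C\mathbf{W}_k^\mu(x_0,2R)$. Letting $i\to\infty$ then gives $|\nabla u(x_0)|$ at Lebesgue points.

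The main obstacle is the comparison step (i) in gradient form. A sup bound on $u-w$ does not give a gradient bound for a nonlinear difference. My first attempt would use the one-sided Hessian bound: both $u$ and $w$ are subharmonic. For a function $v$ with $|v|\leqs M$ on $B_r$ and $\Delta v$ a signed measure of total mass at most $C\mu_1$, I would bound $\fint|\nabla v|$ by $M/r$ plus a mass term. Here $\Delta u$ is controlled by Hessian measures through the Maclaurin-type inequality $\mu_1(B_r)\lesssim r^{n-2}(\ldots)$ relative to $\mu_k$, using an estimate of \cite{TW97measureII} type.
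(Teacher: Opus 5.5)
Your reduction to smooth $u$ and the final passage to Lebesgue points are fine; they are essentially Step 2 of the paper. The core iteration, however, has a genuine gap. Combining your two ingredients gives
\[
A_{i+1}\leqs C\sup_{B_{i+1}}|\nabla w|+C\fint_{B_i}|\nabla u-\nabla w|\leqs C A_i+C\Big(\frac{\mu(B_i)}{r_i^{n-k}}\Big)^{1/k}
\]
with some $C>1$. After $m$ steps this only yields a bound of size $C^mA_0$, which is useless as $m\to\infty$. In the Duzaar--Mingione scheme this constant is absorbed by excess decay of $\nabla w$, i.e.\ by $C^{1,\beta}$ estimates for the reference problem. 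Here no such decay exists: $w=|x_1|$ is $k$-harmonic for every $k\geqs2$ and is not even $C^1$. So the ``small-$\tau$ oscillation version'' you invoke is not available, and subharmonicity ($\Gamma_k\subset\Gamma_1$) gives no semiconvexity to replace it. The missing idea is a gradient comparison with coefficient exactly one, at the level of Lipschitz constants. The paper proves this in Lemma~\ref{lem32}, via an Ishii--Lions two-point maximum principle and the algebraic Lemma~\ref{lem31}: if $S_k[h]=c$ in $B_r$, then for every Lipschitz $v$,
\[
\operatorname{Lip}_{B_{r/2}}h\leqs\operatorname{Lip}_{B_r}v+\frac{C}{r}\Vert h-v\Vert_{L^\infty(B_r)}.
\]
Applied to $k$-harmonic replacements $h_{l+1},h_l$ at consecutive scales $r_l=R4^{-l}$, this gives the additive recursion $\operatorname{Lip}h_{l+1}\leqs\operatorname{Lip}h_l+C(\mu(\overline{B}_{r_l})/r_l^{n-k})^{1/k}$, which sums to the Wolff potential. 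Averages of $|\nabla u|$ enter only at the top scale, through the monotonicity $\int|\nabla h_0|\leqs\int|\nabla u|$ of Lemma~\ref{lem21}, and at the bottom scale, through Corollary~\ref{cor21}.

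Your comparison step (i) also fails as stated. For $k\leqs n/2$ the bound $\sup_{B_i}|u-w|\leqs Cr_i(\mu(B_i)/r_i^{n-k})^{1/k}$ is false. Labutin's estimate controls $(w-u)(x)$ by the full pointwise potential $\mathbf{W}^\mu_{\frac{2k}{k+1},k+1}(x,\cdot)$, which is infinite at an atom of $\mu$: the solution of $S_k[z]=\delta_0$ behaves like $-|x|^{2-n/k}$, or like $\log|x|$ when $k=n/2$. A sup bound of that size holds only where the measure vanishes. This is why the paper first replaces $u$ by its Perron lifting $U$ with $S_k[U]=0$ on disjoint shells $E_l$. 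The differences are then $k$-harmonic near the spheres $P_{l+1}$, so their sup there is controlled by an $L^1$ average, and hence, via \eqref{formu310}, by $(\mu(B_{r_l})/r_l^{n-2k})^{1/k}$.

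Your fallback through the Laplacian does not work either. Maclaurin's inequality goes the other way, so $\mu_1$ is not controlled by $\mu_k$: $|x_1|$ has $\mu_k=0$ but $\mu_1\neq0$. The available bounds for $\mu_1(B_r)$ are in terms of the oscillation of $u$, which reintroduces the gradient at leading order. Note that the paper never needs an $L^1$ bound on $\nabla u-\nabla w$; sup bounds on differences of replacements, combined with the unit-coefficient Lipschitz stability, suffice.
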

It is known that $u\in\Phi^k(\Omega)\subset W_{loc}^{1,q}(\Omega)$ for $q<\frac{nk}{n-k}$. Accordingly, the gradient estimate in \eqref{formu13} is understood to hold almost everywhere in $\Omega$. In particular, the corresponding Lipschitz estimate holds at all points. The following corollary is a direct consequence by applying the formula \eqref{formu12} to Theorem \ref{thm11}.

\begin{corollary}\label{cor11}
Let $2\leqs k\leqs n$, $u\in\Phi^k(\Omega)$ and $\mu_k(u)=\mu$. If there exist constants $M>0$, $\varepsilon>0$ such that
\begin{align*}
\mu(B(x,r))\leqs Mr^{n-k+\varepsilon}
\end{align*}
for all $x,r$ such that $B(x,3r)\subset\Omega$, then $u$ is locally Lipschitz in $\Omega$.

In particular, if $\mu_k(u)=f dx$ with $f\in L^p_{loc}(\Omega)$, $p>\frac{n}{k}$, then $u$ is locally Lipschitz in $\Omega$.
\end{corollary}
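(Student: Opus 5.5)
Corollary \ref{cor11} follows by inserting the growth condition into the Wolff potential \eqref{formu12} and then invoking Theorem \ref{thm11}. No new analysis is needed beyond this.

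Fix $\Omega''\Subset\Omega'\Subset\Omega$. Choose $R>0$ so small that $B(x,6R)\subset\Omega$ for every $x\in\Omega'$. Then for $t\leqs 2R$ we have $B(x,3t)\subset\Omega$, so the hypothesis gives
\begin{align*}
\mathbf{W}^\mu_k(x,2R)\leqs\int_0^{2R}\Big(\frac{Mt^{n-k+\varepsilon}}{t^{n-k}}\Big)^{1/k}\frac{dt}{t}=M^{1/k}\int_0^{2R}t^{\varepsilon/k-1}\,dt=\frac{k}{\varepsilon}M^{1/k}(2R)^{\varepsilon/k}.
\end{align*}
This bound is uniform in $x\in\Omega'$. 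The other term in \eqref{formu13} is also controlled: since $u\in W^{1,1}_{loc}(\Omega)$, we have $\fint_{B(x,R)}|\nabla u|\leqs CR^{-n}\Vert\nabla u\Vert_{L^1(\Omega'_R)}$, where $\Omega'_R$ is the $R$-neighbourhood of $\Omega'$, which lies compactly in $\Omega$. Theorem \ref{thm11} then gives $|\nabla u|\leqs C_0$ almost everywhere in $\Omega'$.

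An a.e. gradient bound for a $W^{1,1}_{loc}$ function yields Lipschitz continuity on balls, hence a local Lipschitz bound on $\Omega''$. It is cleaner, though, to cite the final sentence of Theorem \ref{thm11} directly, since we have just shown the potential is finite (indeed uniformly bounded). One subtlety is that $u$ is only upper semicontinuous. Two routes handle this: use $\Phi^k\subset C^{0,\alpha}_{loc}$ from Labutin's estimate \eqref{formu11}, since $n-k+\varepsilon>n-2k+\varepsilon$, or simply rely on the theorem's own Lipschitz statement.

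For the $L^p$ case, Hölder's inequality gives
\begin{align*}
\mu(B(x,r))=\int_{B(x,r)}f\leqs\Vert f\Vert_{L^p(K)}|B(x,r)|^{1-1/p}=C\Vert f\Vert_{L^p(K)}r^{n-n/p}
\end{align*}
for $x$ in a compact set and $r$ small, where $K$ is a slightly larger compact set containing $B(x,r)$. The exponent satisfies $n-n/p=n-k+\varepsilon$ with $\varepsilon=k-n/p>0$ exactly when $p>n/k$. So the growth hypothesis holds locally, with $M=C\Vert f\Vert_{L^p(K)}$, and the first part applies.

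The argument has no real obstacle. The only care needed is to make the growth hypothesis hold uniformly on compact subsets: the condition is only required for balls with $B(x,3r)\subset\Omega$, so we localize with the radius $R$ chosen as above.
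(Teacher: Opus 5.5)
Your proposal is correct and follows the paper's proof. You bound $\mathbf{W}_k^\mu$ by $\frac{k}{\varepsilon}M^{1/k}(2R)^{\varepsilon/k}$ via the growth condition, invoke Theorem~\ref{thm11}, and reduce the $L^p$ case to the growth condition by H\"older's inequality with $\varepsilon=k-n/p$. Your localization is slightly more careful than the paper's, which simply asserts that the potential is finite: you choose $B(x,6R)\subset\Omega$ so the hypothesis applies for every $t\leqs 2R$, and you bound $\fint_{B(x,R)}|\nabla u|$ uniformly on compact sets.
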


The interior gradient regularity is in a sense optimal, as there exists a merely Lipschitz $k$-subharmonic function $u(x)=|x_1|$, which solves
\begin{align*}
\mu_k(u)=0\quad\text{for}\quad 2\leqs k\leqs n.
\end{align*}
For the nondegenerate case, we further have a counterexample for $3\leqs k\leqs n$:
\begin{align*}
u(x)=|x'|+(1+x_k^2)|x'|^{k/2},\quad\text{where}\quad x'=(x_1,\ldots,x_{k-1}).
\end{align*}
Solution of this type was founded by Caffarelli for $k=n$, see also \cite{DF14Lipschitz}. Then $u$ is a convex function and it is not $C^1$-differentiable at $x'=0$. Moreover, the corresponding Hessian measure has finite positive upper and lower bounds near the origin. Indeed, by direct calculations, one has
\begin{align*}
\mu_k(u)=k\Big[\big(\frac{k}{2}-1\big)-\big(\frac{k}{2}+1\big)x_k^2\Big]\Big[1+\frac{k}{2}|x'|^{\frac{k}{2}-1}(1+x_k^2)\Big]^{k-2}
\end{align*}
and $0<c_k\leqs \mu_k(u)\leqs C_k<\infty$ in $B_{1/4}$. Additionally, Caffarelli and Yuan\,\cite{CY22singular} constructed a merely Lipschitz convex solution $U_n$ to $\det D^2u=1$ for $n\geqs3$. Consequently, by setting $u(x)=U_k(x_1,\ldots,x_k)$, $x\in\mathbb{R}^n$ for $3\leqs k\leqs n$, we obtain a merely Lipschitz solution to $\mu_k(u)=1$. Both examples demonstrate the optimality of the interior (gradient) regularity results for $k$-Hessian equations when $k\geqs3$.
\\[-0.5em]

In the following, we focus on the case $k=2$. There have been extensive results on the interior regularity for the following quadratic Hessian equation,
\begin{align*}
S_2[u]=f\quad\text{in }\Omega.
\end{align*}
When $n=2$, it falls into the Monge-Ampère equation and the interior Hessian estimate for smooth convex solutions was established by Heinz\,\cite{Heinz59two}. For $f$ only with positive upper and lower bounds, Caffarelli\,\cite{Caffarelli91C1alpha} obtained the interior $C^{1,\alpha}$ regularity for Alexandrov solutions. When $n=3$, Warren and Yuan\,\cite{WY09sigma2} proved the interior $C^2$ estimates for constant $f=1$, and Qiu\,\cite{Qiu24sigma2} derived the corresponding results for positive $f\in C^{1,1}$. Shankar and Yuan\,\cite{SY25sigma2} resolved the problem of interior Hessian estimates for constant equation in dimension four, and later Fan\,\cite{Fan26sigma2} extended their results to positive $C^{1,1}$ densities. In arbitrary dimensions, many results have also been established for convex, almost convex or semiconvex solutions (see, e.g., \cite{GQ19scalar,MSY19sigma2,SY20sigma2,Mooney21sigma2,CJTZ26sigma2,ZZ26sigma2}). Recently, Li and Wu\,\cite{LW26sigma2} removed the various convexity and dimension restrictions and proved the interior Hessian estimates for constant $2$-Hessian equation; their method applies to positive $C^{1,1}$ right-hand sides as well. Chen, Zhou and Zhu\,\cite{CZZ26sigma2} later obtained the interior $C^{2,\alpha}$ regularity results for equation with positive $C^\alpha$ variable right sides.

Our result is as follows. In fact, we consider the quadratic Hessian equation with positive right-hand sides bounded above and below, and prove the interior $C^{1,\alpha}$ estimates for $2$-subharmonic solutions in arbitrary dimensions.

\begin{thm}\label{thm12}
Let $u\in\Phi^2(\Omega)$ and $\mu_2(u)=fdx$ with $f\in L^\infty(\Omega)$. If $f$ satisfies
\begin{align*}
0<\lambda\leqs f\leqs\Lambda<\infty\quad\text{in }\Omega,
\end{align*}
then there exists a constant $\alpha=\alpha(n,\lambda,\Lambda)\in(0,1)$ such that $u\in C^{1,\alpha}_{loc}(\Omega)$. In particular, if $B(x_0,4R)\subset\Omega$, then it holds
\begin{align*}
[Du]_{C^\alpha(B(x_0,R))}\leqs C(n,\lambda,\Lambda,\alpha ,R,\Vert u\Vert_{L^\infty(B(x_0,4R))}).
\end{align*}
\end{thm}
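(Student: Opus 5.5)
Theorem \ref{thm12} follows by combining Theorem \ref{thm11} with the structure of $2$-subharmonic functions, namely that their Hessians satisfy a one-sided integrability. By Corollary \ref{cor11} (with $f\in L^\infty$, so $\mu(B(x,r))\leqs \Lambda |B_1| r^n$ and $n>n-2$), $u$ is locally Lipschitz. More precisely, Theorem \ref{thm11} combined with the $W^{1,1}$ bound of Trudinger--Wang gives $\Vert Du\Vert_{L^\infty(B(x_0,3R))}\leqs C(n,\Lambda,R,\Vert u\Vert_{L^\infty(B(x_0,4R))})$. Since $C^{1,\alpha}$ is a pointwise, scale-invariant property, it suffices to show that $Du$ has a Campanato-type decay: for every ball $B_r\subset B(x_0,2R)$ there is a vector $p_r$ with $\fint_{B_{\theta r}}|Du-p_{\theta r}|\leqs \theta^{\alpha}\fint_{B_r}|Du-p_r|+Cr^{\alpha}$ for some fixed $\theta\in(0,1)$. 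To reduce to smooth functions, I will approximate $f$ by smooth $f_j\in[\lambda,\Lambda]$ and solve Dirichlet problems on balls with smoothed boundary data. The weak continuity of Hessian measures together with uniqueness then lets the estimates pass to the limit, so I may assume $u$ is smooth and $D^2u\in\Gamma_2$.

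The key step is an improvement-of-flatness argument for the gradient. I would rescale so that $u$ is defined on $B_1$ with $|Du|\leqs 1$ and $\lambda\leqs S_2[u]\leqs\Lambda$ (the equation is invariant under $u\mapsto r^{-2}u(rx)$ up to these normalizations). I then argue by compactness: suppose that on $B_1$, $Du$ is close in $L^1$ to a constant vector $p$. Set $v=u-p\cdot x$. Then $v$ is small in $C^0$ by the Lipschitz bound and interpolation, and $S_2[v]=f$. To rule out degeneration I would use the lower bound $\lambda>0$ in an essential way. A $2$-subharmonic function with $S_2\geqs\lambda$ cannot be flat, because comparison with the quadratic $\frac{c}{2}|x|^2$ (with $S_2=\binom n2 c^2$), together with the upper pointwise estimate \eqref{formu11}, forces a quantitative strict convexity in the averaged sense. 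That in turn gives a $W^{2,1+\varepsilon}$ bound, as the abstract announces. Concretely, I would aim for $\Vert D^2u\Vert_{L^{1+\varepsilon}(B_{1/2})}\leqs C$ via the inequality $|D^2u|\leqs C\,\Delta u$, valid on $\Gamma_2$ up to the negative eigenvalue, which is controlled by $\Delta u$. Then $\Delta u$ becomes a nonnegative function satisfying a reverse-Hölder / Gehring-type inequality: the concavity of $\sigma_2^{1/2}$ and $\sigma_2\geqs\lambda$ give $\Delta u\geqs c$, and the integration by parts $\int\Delta u\,\phi\leqs C\Vert Du\Vert_\infty$ gives $L^1$ control on every ball, uniformly under rescaling.

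Next I would upgrade this to $C^{1,\alpha}$. Using the $W^{2,1+\varepsilon}$ estimate at every scale (scale-invariant after subtracting affine functions), together with the Lipschitz/gradient potential bound of Theorem \ref{thm11} applied to $u-\ell$ for affine $\ell$, I would derive an oscillation decay $\operatorname{osc}_{B_{\theta r}}Du\leqs(1-\delta)\operatorname{osc}_{B_r}Du+Cr$. The approach is to show that in each coordinate direction $e$, the function $u_e$ is a supersolution or subsolution of the linearized uniformly elliptic operator $\sigma_2^{ij}\partial_{ij}$ in an appropriate weak sense, by concavity of $\sigma_2^{1/2}$. The ellipticity of $\sigma_2^{ij}$ degenerates only where $\Delta u$ is large, and by the $L^{1+\varepsilon}$ bound this happens on a set of small measure. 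A De Giorgi / Krylov--Safonov measure-theoretic lemma with this small exceptional set would then give the one-sided decay in each direction, and hence the Hölder decay of $Du$.

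The main obstacle is the last step. The linearized operator is not uniformly elliptic, and $u_e$ is only a one-sided solution, so a genuine Harnack inequality is unavailable and the exceptional set must be handled with care. If this fails, I would try the alternative route: a Caffarelli-style compactness argument, in which limits of rescalings with $Du$ nearly constant are $2$-subharmonic with $S_2$ pinched. Those limits satisfy the $W^{2,1+\varepsilon}$ bound and are therefore close to $C^{1,\alpha_0}$ profiles, which closes the iteration. Either way, the dependence of $\alpha$ only on $(n,\lambda,\Lambda)$ comes from the universal constants in the $W^{2,1+\varepsilon}$ estimate.
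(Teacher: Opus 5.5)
Your reduction steps match the paper: the Lipschitz bound from Theorem \ref{thm11}, approximation by smooth solutions, and $|D^2u|\leqs\Delta u$ on $\Gamma_2$. Both substantive steps, however, have gaps.

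You give no actual mechanism for the $W^{2,1+\varepsilon}$ bound. The two facts you list are $\Delta u\geqs c(n)\sqrt{\lambda}$ and $\int_{B_r}\Delta u\leqs Cr^{n-1}$. The second is all that integrating against a cutoff with a Lipschitz bound gives, and it is not uniform under rescaling. Together they imply no reverse H\"older inequality: $g(x)=1+\psi(x_1)$ with $\psi(t)=1/(|t|\log^2|t|)$ near $t=0$ satisfies both and lies in no $L^{1+\varepsilon}_{loc}$. ``Strict convexity'' is also not the relevant notion for $2$-subharmonic functions. In the paper this step is a genuine theorem. It uses the Li--Wu separation \eqref{formu42}, which is where $\lambda>0$ enters, and the comparison solution of Theorem \ref{thm41}, whose equation switches to $S_2[v]=\lambda$ where $(W-v-\delta)^4\Delta v$ is large. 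It then needs a Pogorelov bound there and the identity \eqref{formu48} combined with \eqref{formu49}, which yield the level-set inequality \eqref{formu416}.

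The decisive gap, which you flag yourself, is the passage from $W^{2,1+\varepsilon}$ to $C^{1,\alpha}$; neither of your routes closes it.

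\emph{Scaling.} Nothing is scale invariant. $u\mapsto r^{-2}u(rx)$ destroys $|Du|\leqs 1$, while $u\mapsto r^{-1}u(rx)$ turns $\lambda$ into $r^2\lambda$. So the $W^{2,1+\varepsilon}$ estimate, whose constant depends on $\lambda$ and $\Vert u\Vert_{L^\infty}$, is not available uniformly at small scales, and your Campanato scheme has nothing to iterate.

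\emph{Linearization.} The operator $\Delta u\,I-D^2u$ has eigenvalues between roughly $f/\Delta u$ and $2\Delta u$. Knowing $\Delta u\in L^{1+\varepsilon}$ is far from enough for a Harnack or measure-decay lemma. A small-measure bad set does not help, because the ABP step of Krylov--Safonov needs ellipticity exactly on the contact set.

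\emph{Compactness.} The compactness route is circular: $W^{2,1+\varepsilon}\not\subset C^{1,\alpha}$ when $1+\varepsilon<n$. Moreover, the $C^{1,\alpha}$ blow-ups have $S_2=r^{2-2\alpha}f\to0$, and their $2$-subharmonic limits (e.g.\ $|x_1|$) need not be $C^1$.

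The missing idea is the argument of Section 5:
\begin{itemize}
\item Select points where the H\"older quotient $M$ of $Du$ is nearly maximal on $B_{M^{-1/(n-1)}}$ (Lemma \ref{lem51}).
\item By compactness, obtain $Q^e_hu\geqs cMh^{1+\alpha}$ (Lemma \ref{lem52}).
\item Show that the exact linearization $\mathcal{S}((H_\mu+K)/2)$, normalized by $\operatorname{tr}K+\Theta$, is uniformly elliptic on $e^\perp$ \emph{only at upper contact points}, where $K\geqs H_\mu-\tau\Theta I$ (Lemma \ref{lem53}).
\item Run ABP on transverse slices to get a set of measure $\gtrsim\rho^{n-1}h$ on which $\mathcal{F}_h*\Delta u\geqs c_*\Theta$ (Lemma \ref{lem54}).
\item Contradict the single-scale bound $\int_{B_2}(\Delta u)^p\leqs L_0$.
\end{itemize}
The contradiction works precisely for $\alpha\leqs(p-1)/(n+p-1)$, which is how $\alpha$ inherits its dependence on $\varepsilon$.
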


The proof of Theorem \ref{thm12} relies on the interior $W^{2,1+\varepsilon}$ regularity for viscosity solutions. We first briefly review the $W^{2,p}$ theory for several types of fully nonlinear equations. For the Monge-Ampère equation, the interior $W^{2,p}$ estimate was established by Caffarelli\,\cite{Caffarelli90W2p} for strictly convex solutions with continuous positive density. For densities with positive upper and lower bounds, De Philippis and Figalli\,\cite{DPF13W21} proved the $W_{loc}^{2,1}$ regularity for strictly convex solutions, and this result was improved to $W_{loc}^{2,1+\varepsilon}$ by De Philippis, Figalli and Savin\,\cite{DPFS13W21epsilon}, and independently, by Schmidt\,\cite{Schmidt13W21epsilon}. In dimension two, positive density assumption suffices to guarantee strict convexity, and the above estimates hold for general convex solutions. Later, Mooney\,\cite{Mooney15singular,Mooney15W21} removed the strict convexity condition for $W_{loc}^{2,1}$ regularity results in arbitrary dimensions. For the $k$-Hessian equation, Hu\,\cite{Hu26hessianW21} proved the $W_{loc}^{2,1}$ regularity for convex viscosity solutions with constant density. For the $2$-Hessian equation, Lyu\,\cite{Lyu26W2p} derived the interior $W^{2,1}$ (resp., $W^{2,p}$) estimates for $2$-subharmonic solutions with positive density bounded above and below (resp., with continuous positive density). In the next theorem, we establish the interior $W^{2,1+\varepsilon}$ regularity result for $2$-subharmonic functions.
\begin{thm}\label{thm13}
Suppose the same assumptions and notations as in Theorem \ref{thm12}. Then there exists a constant $\varepsilon=\varepsilon(n,\lambda,\Lambda)\in(0,1)$ such that $u\in W^{2,1+\varepsilon}_{loc}(\Omega)$.
In particular, if $B(x_0,4R)\subset\Omega$, then it holds
\begin{align*}
\int_{B(x_0,R)}|D^2u|^{1+\varepsilon}\leqs C(n,\lambda, \Lambda,\varepsilon,R,\Vert u\Vert_{L^\infty(B(x_0,4R))}).
\end{align*}
\end{thm}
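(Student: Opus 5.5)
The plan is to derive the $W^{2,1+\varepsilon}$ bound from a power-decay estimate for the sets where $u$ fails to be touched by paraboloids of controlled opening. This is the Caffarelli/Escauriaza approach to $W^{2,\varepsilon}$ for uniformly elliptic viscosity solutions, combined with the Lyu-type $W^{2,1}$ input. First we record the available a priori information. By Theorem \ref{thm11} (with $\mu=f\,dx$, $f\le\Lambda$, so $\mathbf W_2^\mu(x,r)\le C\Lambda^{1/2}r^{1/2}\cdot$ bounded), $u$ is locally Lipschitz, with $\|Du\|_{L^\infty(B(x_0,3R))}$ controlled by $n,\Lambda,R,\|u\|_{L^\infty(B(x_0,4R))}$. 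Next, $S_2[u]\ge\lambda>0$ with $\lambda(D^2u)\in\Gamma_2$ gives the semiconvexity-type inequality $\Delta u\ge c(n)\lambda^{1/2}>0$ in the viscosity sense. It also gives the key algebraic fact on $\Gamma_2$: $|D^2u|^2=(\Delta u)^2-2S_2[u]\le(\Delta u)^2$. Hence it suffices to prove $\Delta u\in L^{1+\varepsilon}_{loc}$. The $L^1$ bound on $\Delta u$ is free, since $\Delta u\ge0$ is a measure with mass controlled by $\|Du\|_{L^\infty}$ via integration by parts against a cutoff.

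The core step is a decay estimate of the form
\[
|\{x\in B_{R}:\ \Theta(x)>t\}|\le C t^{-1-\delta}
\]
for some $\delta=\delta(n,\lambda,\Lambda)>0$. Here $\Theta(x)$ is the smallest opening $M$ of paraboloids touching $u$ from above and below at $x$ in $B_R$, in the sense of Caffarelli--Cabr\'e $G_M,\underline G_M$ sets. Once we have it, Lemma-type arguments give $D^2u\in L^{1+\delta'}$ for every $\delta'<\delta$. The lower-touching sets are handled by semiconvexity-free ABP-type reasoning: $u$ is $2$-subharmonic, so, as for the Monge--Amp\`ere theory in \cite{DPFS13W21epsilon}, the density of points touched from below is linked to the measure of the image of the subgradient. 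For the upper touching, we use the linearized structure. The operator $F(D^2u)=S_2^{1/2}$ is concave on $\Gamma_2$, and a touching paraboloid $P$ from above with $u\le P$ satisfies $S_2[P]\ge\lambda$. If the opening is large in one direction, the $\Gamma_2$ constraint forces $\Delta P$ large. The decay will then come from an iterative Calder\'on--Zygmund covering. At each dyadic scale, the basic measure estimate shows that a fixed fraction $\theta(n,\lambda,\Lambda)$ of any cube is touched by paraboloids with opening at most $C$ times the previous one, after renormalizing $u$ on the cube by subtracting its affine approximation and rescaling. Crucially, the renormalized function solves the same equation, since $S_2$ is invariant under adding affine functions and under $u\mapsto r^{-2}u(rx)$.

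The main obstacle is the scale-invariant basic measure estimate without uniform ellipticity: on a cube $Q$ where the rescaled $u$ is touched at one point by a paraboloid of opening $1$, we must show a proportion $\theta>0$ of $Q$ is touched with opening $\le M_0$. I would first try to obtain it via the $W^{2,1}$ estimate of Lyu \cite{Lyu26W2p} applied to the renormalized function. That estimate yields $\int_{Q}\Delta u\le C$ uniformly in the normalized setting, and Chebyshev gives $\Delta u\le M_0$ off a set of proportion $C/M_0$. The difficulty is converting ``$\Delta u$ bounded at a point'' into two-sided touching. To do this I would use $|D^2u|\le\Delta u$ in $\Gamma_2$ together with the Lipschitz bound, applied to a maximal-function version: points where the maximal function of $\Delta u$ is $\le M_0$ are points of quadratic two-sided control (Alexandrov-type Taylor estimate). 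Feeding this into the covering gives a geometric improvement $|\{\Theta>M_0^j\}|\le(1-\theta)^j$, which is exactly power decay with exponent $\log(1/(1-\theta))/\log M_0$. Since Lyu's constant in fact gives the $L^1$ bound, the exponent must exceed $1$. That is obtained in the De Philippis--Figalli--Savin manner, by showing that the large-Hessian set has mass strictly less than what $L^1$ alone permits, using the lower bound $f\ge\lambda$ to prevent concentration. Finally, covering $B(x_0,R)$ by balls of radius comparable to $R$ inside $B(x_0,4R)$ yields the stated bound.
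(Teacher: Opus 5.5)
There is a genuine gap, and it is in the step that carries the theorem: getting the exponent above $1$. Your preliminary reductions match the paper's: the Lipschitz bound from Theorem~\ref{thm11}, $|D^2u|\le\Delta u$ on $\Gamma_2$, and the free bound on $\int\Delta u$ from a cutoff.

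The first problem is the exponent itself. The Caffarelli--Escauriaza covering gives decay with exponent $\log(1/(1-\theta))/\log M_0$, which nothing forces above $1$. An $L^1$ bound on $\Delta u$ only gives $|\{\Delta u>t\}|\le C/t$, so ``Lyu's constant gives the $L^1$ bound, hence the exponent must exceed $1$'' is a non sequitur.

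The second problem is that the iteration cannot be run with uniform constants. The scaling $u\mapsto r^{-2}u(rx)$ does not change openings. So normalizing a paraboloid of opening $N=M_0^j$ to opening $1$ forces you to divide by $N$, and then $S_2[\tilde u]=f/N^2\in[\lambda N^{-2},\Lambda N^{-2}]$. The renormalized function does \emph{not} solve the same equation, and the positive lower bound that Lyu's $W^{2,1}$ constant depends on degenerates.

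In fact a basic measure estimate with $\theta,M_0$ uniform along your iteration would hold for solutions of $S_2=\epsilon^2$ with arbitrarily small $\epsilon$, for instance the Dirichlet solutions in $B_1$ with boundary values $|x_1|$. Touching sets are stable under uniform limits, so it would then hold for their limit $|x_1|$. That function has $\mu_2=0$, two-sided opening $\approx1/|x_1|$, and $|\{\Theta>t\}|\approx t^{-1}$. So no scheme of this type can produce an exponent above $1$.

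The De Philippis--Figalli--Savin improvement does not transfer either. It rests on the affine invariance of Monge--Amp\`ere: sections are normalized by determinant-one affine maps, which renormalize large Hessians without changing the equation. But $\sigma_2(A^THA)\neq\sigma_2(H)$ for general $A$ with $\det A=1$. Your phrase ``using $f\ge\lambda$ to prevent concentration'' states the conclusion you need, not a mechanism for it.

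The paper instead proves a Gehring-type level-set inequality by PDE comparison, with no touching paraboloids. The steps, on smooth approximations, are:
\begin{enumerate}
\item Take $W$ with $S_2[W]=\lambda/4$ and $W=u$ on $\partial B_2$. The Li--Wu separation argument then gives $\bar\rho=W-u-\delta\ge\delta$ in $B_1$ and $\bar\rho<0$ near $\partial B_2$.
\item Build the CZZ/Lyu comparison solution $v$ of $G(D^2v)+(g-m)\eta(\rho_+^4\Delta v/P)=g$, with $\rho=W-v-\delta$. Wherever $\rho_+^4\Delta v\ge\tau P$ it satisfies $G(D^2v)=m$, so the Chou--Wang Pogorelov estimate caps $V=\rho_+^4\Delta v$ by $\max(C_0,\tau_*P)$.
\item Combine three ingredients: the identity $\int d_2(D^2u,D^2v)^2\le\int(G(D^2u)-G(D^2v))^2\le(M-m)^2|\{\eta>0\}|$; the trace-ratio bound $d_2^2\ge m^2(\operatorname{tr}A/\operatorname{tr}B+\operatorname{tr}B/\operatorname{tr}A)-2M^2$; and a test of the linearized equation against $(|x|^2-4)/2$, which controls the region where $v-u$ is comparable to $\bar\rho$.
\end{enumerate}
Together these give, for $U=\bar\rho_+^4\Delta u$ and $P>A_0$,
\[
\int_{\{U>L_*P\}}U\le(L_*+1)P\,|\{U>c_*P\}|.
\]
Integrating this against $P^{\varepsilon-1}$ and absorbing yields $\int U^{1+\varepsilon}\le C\int U$. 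This inequality, which bounds the mass above level $L_*P$ by $P$ times the measure above level $c_*P$, is the missing ingredient. It is where $f\ge\lambda$, the strict supersolution $W$ and the Pogorelov cap actually enter, and nothing in your proposal plays its role.
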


\begin{remark}
After completing this work, we became aware of the very recent preprint \cite{Lyu26C1alpha}, in which the interior regularity results of Theorem \ref{thm12} and Theorem \ref{thm13} were established for smooth functions. The present work was completed independently of \cite{Lyu26C1alpha}. Our proofs make use of the interior gradient estimate established in Theorem \ref{thm11}.
\end{remark}

The rest of the paper is organized as follows. In Section 2, we establish the gradient integral estimates on $(k-1)$-convex domains (see \eqref{formu21} for definition). In particular, we prove that every continuous $k$-subharmonic functions with finite $k$-Hessian measure on $\Omega$ belongs to $W^{1,1}(\Omega)$. Section 3 is devoted to the proof of Theorem \ref{thm11}, namely the interior gradient estimates for $k$-Hessian equations. Section 4 and 5 are concerned with $2$-Hessian equations and complete the proofs of Theorem \ref{thm13} and Theorem \ref{thm12}, respectively.

\section{Gradient Integral Estimates}
In this section, we establish the gradient integral estimates for $k$-subharmonic functions on a bounded domain. Following \cite{CNS85Dirichlet}, we call a bounded $C^2$ domain $\Omega\subset\mathbb{R}^n$ to be $(k-1)$-convex, if $\partial\Omega$ is connected and
\begin{align}
\sigma_{k-1}(\kappa)\geqs c_0>0\quad\text{on }\partial\Omega
\label{formu21}
\end{align}
for some positive constant $c_0$, where $\kappa=(\kappa_1,\ldots,\kappa_{n-1})$ denote the principal curvatures of $\partial\Omega$ with respect to its inner normal. Let $\Omega$ be $(k-1)$-convex in this section.

\begin{lemma}[Comparison principle]\label{lem21}
Let $u,v\in\Phi^k(\Omega)\cap C^2(\overline{\Omega})$ satisfy $u\leqs v$ in $\Omega$ and $u=v$ on $\partial\Omega$. Let $w\in C^1(\overline{\Omega})$ satisfy $w\leqs u-v$ in $\Omega$ and $w=0$ on $\partial\Omega$. Then we have
\begin{align}
0\leqs\int_\Omega|\nabla u|-\int_\Omega|\nabla v|\leqs\int_\Omega|\nabla w|.
\label{formu22}
\end{align}
\end{lemma}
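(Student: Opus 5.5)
The plan is a calibration argument with the unit normal field of the level sets. Once the divergence has a sign, both inequalities in \eqref{formu22} follow from one integration by parts each. The sign should come from $k\geqs2$ alone. I do not expect to need the $(k-1)$-convexity of $\Omega$, beyond $\partial\Omega$ being regular enough to integrate by parts with zero boundary values.

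\textbf{Step 1: a regularized normal field with nonnegative divergence.} For $\phi\in\{u,v\}$ and $\delta>0$, set $N^\delta_\phi=\nabla\phi/\sqrt{|\nabla\phi|^2+\delta^2}$, which is $C^1(\overline\Omega)$. A direct computation gives
\begin{align*}
\mathrm{div}\,N^\delta_\phi=\frac{|\nabla\phi|^2\Delta\phi-D^2\phi(\nabla\phi,\nabla\phi)+\delta^2\Delta\phi}{(|\nabla\phi|^2+\delta^2)^{3/2}}.
\end{align*}
Since $\phi\in\Phi^k\cap C^2$ with $k\geqs2$, we have $\lambda(D^2\phi)\in\overline{\Gamma}_2$. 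This gives $\Delta\phi\geqs0$. It also gives $\sigma_1(\lambda)-\lambda_i=\sigma_1(\lambda|i)\geqs0$ for every $i$, so $D^2\phi(e,e)\leqs\Delta\phi$ for every unit vector $e$. Hence $\mathrm{div}\,N^\delta_\phi\geqs0$ in $\Omega$. Moreover $|N^\delta_\phi|\leqs1$, and $\nabla\phi\cdot N^\delta_\phi\to|\nabla\phi|$ boundedly as $\delta\to0$.

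\textbf{Step 2: lower bound.} Take $N=N^\delta_v$. Using $|N|\leqs1$, then integration by parts with $u-v=0$ on $\partial\Omega$, then $u-v\leqs0$ and $\mathrm{div}\,N\geqs0$:
\begin{align*}
\int_\Omega|\nabla u|\geqs\int_\Omega\nabla u\cdot N=\int_\Omega\nabla v\cdot N-\int_\Omega(u-v)\,\mathrm{div}\,N\geqs\int_\Omega\nabla v\cdot N.
\end{align*}
Letting $\delta\to0$ by dominated convergence gives $\int_\Omega|\nabla u|\geqs\int_\Omega|\nabla v|$.

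\textbf{Step 3: upper bound.} Take $N=N^\delta_u$. Integrating by parts as before,
\begin{align*}
\int_\Omega\nabla u\cdot N=\int_\Omega\nabla v\cdot N-\int_\Omega(u-v)\,\mathrm{div}\,N.
\end{align*}
Since $w\leqs u-v$ and $\mathrm{div}\,N\geqs0$, we have $-\int_\Omega(u-v)\,\mathrm{div}\,N\leqs-\int_\Omega w\,\mathrm{div}\,N$. Because $w\in C^1(\overline\Omega)$ vanishes on $\partial\Omega$, this last term equals $\int_\Omega\nabla w\cdot N\leqs\int_\Omega|\nabla w|$. Also $\int_\Omega\nabla v\cdot N\leqs\int_\Omega|\nabla v|$. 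Combining these and letting $\delta\to0$ on the left gives $\int_\Omega|\nabla u|-\int_\Omega|\nabla v|\leqs\int_\Omega|\nabla w|$.

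\textbf{Main obstacle.} The only delicate point is the sign of the divergence in Step 1. Geometrically this is the statement that level sets of $2$-admissible functions are mean convex. It also requires care at critical points, where $\nabla\phi/|\nabla\phi|$ is undefined; the $\delta$-regularization handles both. The key algebraic fact is that $\lambda\in\overline\Gamma_2$ forces $\sigma_1(\lambda|i)\geqs0$ for each $i$. I would verify this via the identity $\sigma_1(\lambda|i)^2=\sum_{j\neq i}\lambda_j^2+2\sigma_2(\lambda|i)$, together with $\sigma_2(\lambda)=\lambda_i\sigma_1(\lambda|i)+\sigma_2(\lambda|i)$. Alternatively I would cite the standard Gårding cone inclusion for $\Gamma_k$.
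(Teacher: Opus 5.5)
Your argument is correct, and it takes a slightly different route from the paper's.

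The paper interpolates $U_t=v+t(u-v)$ and uses convexity of $\Gamma_k$ to keep every $U_t$ in $\Phi^k\subset\Phi^2$. It then shows $0\le J_\varepsilon'(t)\le\int_\Omega|\nabla w|$ for each $t\in[0,1]$, where $J_\varepsilon(t)=\int_\Omega\sqrt{\varepsilon^2+|\nabla U_t|^2}$, and integrates in $t$.

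You calibrate only at the endpoints: $\nabla v/\sqrt{\delta^2+|\nabla v|^2}$ for the lower bound and $\nabla u/\sqrt{\delta^2+|\nabla u|^2}$ for the upper bound. Since $J_\varepsilon$ is convex in $t$, this is in effect the pair of supporting-line inequalities $J_\varepsilon'(0)\le J_\varepsilon(1)-J_\varepsilon(0)\le J_\varepsilon'(1)$. So you only need the sign information for $J'$ at the two ends, not along the whole segment.

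What your version buys is simplicity: there is no homotopy, no differentiation under the integral, and no appeal to admissibility of the intermediate $U_t$. The lower bound uses only that $v$ is $2$-admissible, and the upper bound only that $u$ is. What the paper's version buys is the slightly stronger statement that $t\mapsto\int_\Omega|\nabla U_t|$ is nondecreasing and Lipschitz with constant $\int_\Omega|\nabla w|$ along the whole segment. That extra information is not used later.

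The key ingredient is the same in both proofs: $\operatorname{div}\big(\nabla\phi/\sqrt{\delta^2+|\nabla\phi|^2}\big)\ge0$, because $\Delta\phi\ge0$ and $(\Delta\phi)I-D^2\phi\ge0$ when $\lambda(D^2\phi)\in\overline{\Gamma}_2$. The paper simply cites this fact. Your elementary check that $\sigma_1(\lambda|i)\ge0$ on $\overline{\Gamma}_2$, via the two identities you list, is sound.
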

\begin{proof}
Denote $h=u-v\leqs0$, then $w\leqs h$. For $0\leqs t\leqs1$, define
\begin{align*}
U_t=(1-t)v+tu=v+th.
\end{align*}
By the convexity of $\Gamma_k$, $U_t\in\Phi^k(\Omega)\subset\Phi^2(\Omega)$. For $\varepsilon>0$, set
\begin{align*}
G_\varepsilon(p)=\sqrt{\varepsilon^2+|p|^2}\quad\text{and}\quad J_\varepsilon(t)=\int_\Omega G_\varepsilon(\nabla U_t).
\end{align*}
Denote $X_{t,\varepsilon}=DG_\varepsilon(\nabla U_t)$, then $|X_{t,\varepsilon}|\leqs1$ and
\begin{align*}
\textrm{div}X_{t,\varepsilon}=\textrm{div}\Big(\frac{\nabla U_t}{\sqrt{\varepsilon^2+|\nabla U_t|^2}}\Big)=\frac{\varepsilon^2\Delta U_t+\langle A\nabla U_t,\nabla U_t\rangle}{\big(\sqrt{\varepsilon^2+|\nabla U_t|^2}\big)^3},
\end{align*}
where $A=\Delta U_tI_n-D^2U_t$. Since $U_t\in\Phi^2(\Omega)$, we have (see \cite{Wangnote})
\begin{align*}
\Delta U_t\geqs0\quad\text{and}\quad DS_2[U_t]=\Delta U_tI_n-D^2U_t\geqs0.
\end{align*}
Therefore, we obtain $\text{div}X_\varepsilon\geqs0$. By integration by parts and $h=0$ on $\partial\Omega$,
\begin{align*}
J_\varepsilon'(t)=\int_\Omega\langle X_{t,\varepsilon},\nabla h\rangle=\int_\Omega(-h)\textrm{div}X_{t,\varepsilon}.
\end{align*}
Since $w\leqs h\leqs0$ and $\text{div}X_\varepsilon\geqs0$ in $\Omega$, by integration by parts again and $w=0$ on $\partial\Omega$,
\begin{align*}
0\leqs J_\varepsilon'(t)\leqs\int_\Omega(-w)\textrm{div}X_{t,\varepsilon}=\int_\Omega\langle X_{t,\varepsilon},\nabla w\rangle\leqs\int_\Omega|\nabla w|,
\end{align*}
where the last inequality follows from $|X_{t,\varepsilon}|\leqs1$. Integrating $J_\varepsilon'(t)$ from $0$ to $1$, we get
\begin{align*}
0\leqs\int_\Omega G_\varepsilon(\nabla u)-\int_\Omega G_\varepsilon(\nabla v)\leqs\int_\Omega|\nabla w|.
\end{align*}
We obtain the desired estimate by taking $\varepsilon\to0$.
\end{proof}
\begin{remark}\label{rmk21}
If $S_k[u]\geqs S_k[v]$ in $\Omega$ and $u=v$ on $\partial\Omega$, then by the comparison principle, $u\leqs v$ in $\Omega$ and thus \eqref{formu22} holds under this assumption.
\end{remark}

The next lemma concerns $k$-subharmonic functions vanishing on the boundary $\partial\Omega$, where the geometric condition on $\Omega$ comes into play.
\begin{lemma}\label{lem22}
Let $u\in\Phi^k(\Omega)\cap C^2(\overline{\Omega})$ satisfy $u=0$ on $\partial\Omega$. Then
\begin{align}
\int_\Omega|\nabla u|\leqs C\Big(\int_\Omega S_k[u]\Big)^{1/k},
\label{formu2new1}
\end{align}
where the constant $C$ depends on $n,k$ and $\Omega$.

In particular, if $\Omega$ is a ball of radius $R$ in $\mathbb{R}^n$, then $C=C(n,k)R^{\frac{n(k-1)+k}{k}}$.
\end{lemma}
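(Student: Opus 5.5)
The plan is to use the coarea formula on the sublevel sets $E_t=\{u<t\}$, $t\in(m,0)$ with $m=\min_{\overline\Omega}u$, and to bound the perimeter of each level set in terms of the Hessian mass it encloses. Since $u\in\Phi^k(\Omega)$ and $u=0$ on $\partial\Omega$, the maximum principle for subharmonic functions gives $u\leqs0$. The target is
\begin{align*}
\int_\Omega|\nabla u|=\int_{m}^{0}\mathcal{H}^{n-1}(\partial E_t)\,dt .
\end{align*}
By Sard's theorem almost every $t$ is a regular value. For such $t$, $\partial E_t$ is a smooth closed hypersurface with inner normal $\nu=\nabla u/|\nabla u|$. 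Because $u$ is $k$-admissible, the principal curvatures $\kappa$ of $\partial E_t$ satisfy $\sigma_{k-1}(\kappa)\geqs0$, i.e. $E_t$ is $(k-1)$-convex in the weak sense.

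\emph{Step 1 (flux identity).} Write $S_k[u]=\frac1k\,\mathrm{div}(T_{k-1}(D^2u)\nabla u)$, where $T_{k-1}$ is the Newton tensor. On a regular level set the standard identity $\langle T_{k-1}(D^2u)\nabla u,\nabla u\rangle=\sigma_{k-1}(\kappa)|\nabla u|^{k+1}$ holds. The divergence theorem on $E_t$ then gives
\begin{align*}
\mu(t):=\int_{E_t}S_k[u]=\frac1k\int_{\partial E_t}\sigma_{k-1}(\kappa)|\nabla u|^{k}\,d\mathcal{H}^{n-1}\leqs\int_\Omega S_k[u].
\end{align*}
We also use $V(t)=|E_t|$, which satisfies $V'(t)=\int_{\partial E_t}|\nabla u|^{-1}$.

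\emph{Step 2 (H\"older on each level set).} H\"older's inequality with exponents $k+1$ and $\frac{k+1}{k}$ gives
\begin{align*}
\mathcal{H}^{n-1}(\partial E_t)\leqs\Big(\int_{\partial E_t}\sigma_{k-1}(\kappa)|\nabla u|^k\Big)^{\frac1{k+1}}\Big(\int_{\partial E_t}\sigma_{k-1}(\kappa)^{-\frac1k}|\nabla u|^{-1}\Big)^{\frac{k}{k+1}}.
\end{align*}
Integrating in $t$ and applying H\"older once more in $t$, the second factor is controlled by $\int_\Omega\sigma_{k-1}(\kappa)^{-1/k}$ via the coarea formula. The first factor is bounded by $(k\int_\Omega S_k[u])^{1/(k+1)}$. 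Combining these should give $\int_\Omega|\nabla u|\leqs C(\int_\Omega S_k[u])^{1/k}$ after a final interpolation; if the exponents do not close directly, I would add an iteration in $t$ in the spirit of Talenti's rearrangement argument, comparing $V(t)$ with the perimeter.

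\emph{Main obstacle.} The difficulty is that $\sigma_{k-1}(\kappa)$ on interior level sets may degenerate, so $\sigma_{k-1}^{-1/k}$ is not a priori integrable. This is exactly where the hypothesis \eqref{formu21} on $\partial\Omega$ and Lemma \ref{lem21} should enter. My first attempt would avoid the degenerate weight altogether and use Lemma \ref{lem21} with $v=0$ (admissible, since $S_k[0]=0$). This yields $\int_\Omega|\nabla u|\leqs\int_\Omega|\nabla w|$ for every $w\in C^1(\overline\Omega)$ with $w\leqs u$ and $w=0$ on $\partial\Omega$, which reduces the lemma to building a good lower barrier $w$. By Caffarelli--Nirenberg--Spruck \cite{CNS85Dirichlet}, the $(k-1)$-convexity of $\Omega$ guarantees solvability of the Dirichlet problem for $S_k$ with zero boundary data and positive right-hand side. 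So I would take $w=\Lambda^{1/k}w_0$, where $w_0$ solves $S_k[w_0]=1$ in $\Omega$ with $w_0=0$ on $\partial\Omega$. The comparison $w\leqs u$ then needs $S_k[u]\leqs\Lambda$ pointwise. Since only the integral of $S_k[u]$ is controlled, I would first split $S_k[u]$ into a bounded part and a part of small mass, and treat the latter with the level-set estimate of Steps 1--2 on a region where the level sets are close to $\partial\Omega$ and hence inherit $\sigma_{k-1}(\kappa)\geqs c_0/2$. I expect this splitting step to be the genuinely delicate one.

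\emph{Scaling.} For $\Omega=B_R$, apply the estimate on $B_1$ to $u_R(x)=u(Rx)$. Since $\int_{B_R}|\nabla u|=R^{n-1}\int_{B_1}|\nabla u_R|$ and $\int_{B_R}S_k[u]=R^{n-2k}\int_{B_1}S_k[u_R]$, we get
\begin{align*}
\int_{B_R}|\nabla u|\leqs C\,R^{\,n-1-\frac{n-2k}{k}}\Big(\int_{B_R}S_k[u]\Big)^{1/k}.
\end{align*}
The exponent is $n+1-\frac nk=\frac{n(k-1)+k}{k}$, which is the stated form $C=C(n,k)R^{\frac{n(k-1)+k}{k}}$.
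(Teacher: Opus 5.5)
Your proposal does not yet contain a working argument, and the gap sits exactly in the step you flag as delicate.

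In Step 2 the factor $\int_\Omega\sigma_{k-1}(\kappa)^{-1/k}\,dx$ (curvatures of the level sets of $u$) can be infinite. Admissibility only gives $\sigma_{k-1}(\kappa)\geqs0$ on interior level sets, and in any case this factor depends on $u$ rather than only on $n,k,\Omega$. Moreover, H\"older in $t$ leaves a factor $|\min_\Omega u|^{1/(k+1)}$. For $k\leqs n/2$ the quantity $|\min_\Omega u|$ is not controlled by $(\int_\Omega S_k[u])^{1/k}$; smoothed fundamental solutions of $S_k$ on $B_1$ are counterexamples.

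The barrier route via Lemma \ref{lem21} with $v=0$ is legitimate. However, it only gives $\int_\Omega|\nabla u|\leqs(\sup_\Omega S_k[u])^{1/k}\int_\Omega|\nabla w_0|$, which has the wrong dependence. The splitting does not repair this. Write $S_k[u]=f_1+f_2$ and use $w=w_1+w_2$ with $S_k[w_i]=f_i$, $w_i=0$ on $\partial\Omega$. Then bounding $\int_\Omega|\nabla w_2|$ by $(\int_\Omega f_2)^{1/k}$ is the lemma itself. Separately, the assertion that level sets near $\partial\Omega$ inherit $\sigma_{k-1}(\kappa)\geqs c_0/2$ would need a lower bound on $u_\nu$ along $\partial\Omega$, which you do not have. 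Any such collar would also depend on $u$.

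The missing idea is to use your Step 1 identity only at the boundary, $t=0$, which is precisely where \eqref{formu21} gives a uniform bound. You should pass through $\Delta u$ instead of level-set perimeters. Since $u=0$ on $\partial\Omega$, one has $\nabla u=u_\nu\nu$ there, and the tangential block of $D^2u$ equals $u_\nu$ times the second fundamental form. The divergence theorem then gives $k\int_\Omega S_k[u]=\int_{\partial\Omega}\sigma_{k-1}(\kappa)u_\nu^k\geqs c_0\int_{\partial\Omega}u_\nu^k$, with no regular-value issue. Hence, by H\"older,
\[
\int_\Omega\Delta u=\int_{\partial\Omega}u_\nu\leqs|\partial\Omega|^{1-\frac1k}\Big(\frac{k}{c_0}\Big)^{\frac1k}\Big(\int_\Omega S_k[u]\Big)^{1/k}.
\]
What remains is the \emph{linear} bound $\int_\Omega|\nabla u|\leqs C(\Omega)\int_\Omega\Delta u$ for a subharmonic function vanishing on $\partial\Omega$. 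The paper gets it from Talenti's comparison with the radial problem on the symmetrized ball, with $C=R_\Omega$. A Green function estimate $\sup_y\int_\Omega|\nabla_xG(x,y)|\,dx<\infty$ would also work.

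Your scaling computation for $\Omega=B_R$ is correct. It yields the same exponent $\frac{n(k-1)+k}{k}$ that the paper obtains by inserting $R_\Omega=R$, $|\partial B_R|=n\omega_nR^{n-1}$ and $c_0=\binom{n-1}{k-1}R^{1-k}$.
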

\begin{proof}
Denote by $\omega_n$ the volume of the unit ball in $\mathbb{R}^n$.
By Talenti's gradient rearrangement inequality \cite{Talenti76},
\begin{align}
\Vert\nabla u\Vert_{L^q(\Omega)}\leqs\Vert\nabla v\Vert_{L^q(\Omega^*)}\quad\text{for } q\in(0,2],
\label{formu23}
\end{align}
where $\Omega^*=B_{R_\Omega}$ with $R_\Omega=(\frac{|\Omega|}{\omega_n})^{1/n}$, and $v$ is the radial solution of
\begin{equation*}
\left\{
\begin{array}{cc}
-\Delta v=f^\# &\text{in }\Omega^*,\\
v=0 &\text{on }\partial\Omega^*.
\end{array}
\right.
\end{equation*}
Here, $f^\#$ denotes the spherically symmetric decreasing rearrangement of $|\Delta u|=\Delta u$. Since $v$ is radial, we have
\begin{align*}
n\omega_nr^{n-1}(-v'(r))=\int_{B_r}f^\#.
\end{align*}
Hence,
\begin{align*}
\int_{\Omega^*}|\nabla v|=\int_0^{R_\Omega}dr\int_{B_r}f^\#=\int_{\Omega^*}(R_\Omega-|x|)f^\#\leqs R_\Omega\int_{\Omega}\Delta u.
\end{align*}
Applying this to \eqref{formu23} with $q=1$, we obtain
\begin{align}
\int_\Omega|\nabla u|\leqs\int_{\Omega^*}|\nabla v|\leqs R_\Omega\int_\Omega\Delta u.
\label{formu24}
\end{align}

By the variational structure of $\sigma_k$ (see \cite{Wangnote}) and $u=0$ on $\partial\Omega$,
\begin{align*}
k\int_\Omega\sigma_k(D^2u)=\int_\Omega\partial_i\big(\sigma_k^{ij}(D^2u)u_j\big)=\int_{\partial\Omega}\sigma_k^{ij}(D^2u)\nu_iu_j,
\end{align*}
where $\nu$ denotes the outer normal of $\partial\Omega$. For any $x_0\in\partial\Omega$, by a rotation of the coordinates, we assume $\nu=-e_n$ and locally $\partial\Omega$ is given by $x_n=\rho(x')$ where $x'=(x_1,\ldots,x_{n-1})$. Then $D\rho(x_0')=0$ and differentiating the boundary condition $u=0$ twice yields
\begin{align*}
u_{ij}(x_0)+u_n(x_0)\rho_{ij}(x_0)=0.
\end{align*}
By the geometric assumption \eqref{formu21}, we obtain
\begin{align*}
\sigma_k^{nn}(D^2u)=\sigma_{k-1}([D^2u]')=u_\nu^{k-1}\sigma_{k-1}(\kappa)\geqs c_0u_\nu^{k-1},
\end{align*}
where $[D^2u]'=(u_{ij})_{1\leqs i,j\leqs n-1}$. Therefore, by H\"older's inequality,
\begin{align}
k\int_\Omega\sigma_k(D^2u)\geqs c_0\int_{\partial\Omega} u_\nu^k\geqs c_0|\partial\Omega|^{1-k}\Big(\int_{\partial\Omega} u_\nu\Big)^k.
\label{formu25}
\end{align}
Combining \eqref{formu24}$\sim$\eqref{formu25} and utilizing the divergence theorem, we have
\begin{align}
\int_\Omega|\nabla u|\leqs R_\Omega\big(\frac{k}{c_0} \big)^{\frac{1}{k}}|\partial\Omega|^{1-\frac{1}{k}} \Big(\int_\Omega S_k[u]\Big)^{1/k},
\label{formu2new2}
\end{align}
and this proves \eqref{formu2new1}.

If $\Omega=B_R$, then $R_\Omega=R$ and $|\partial B_R|=n\omega_nR^{n-1}$. The second fundamental form of $\partial B_R$ is given by $R^{-1}I$. Consequently, we can take $c_0=\sigma_{k-1}(\kappa)=\binom{n-1}{k-1}R^{1-k}$. Substituting these expressions into \eqref{formu2new2} yields
\begin{align*}
\int_{B_R}|\nabla u|\leqs\bigg(\frac{k(n\omega_n)^{k-1}}{\binom{n-1}{k-1}}\bigg)^{\frac{1}{k}}R^{\frac{n(k-1)+k}{k}}\Big(\int_{B_R} S_k[u]\Big)^{1/k}.
\end{align*}
This completes the proof of Lemma \ref{lem22}.
\end{proof}

The main result in this section is as follows.
\begin{proposition}\label{pro21}
Let $u\in\Phi^k(\Omega)\cap C^0(\overline{\Omega})$ satisfy $\mu_k(u)=\mu$ in $\Omega$ and $\mu$ be a finite Borel measure on $\Omega$. Then $u\in W^{1,1}(\Omega)$ and it satisfies
\begin{align*}
\Vert\nabla u\Vert_{L^1(\Omega)}\leqs C\mu(\Omega)^{1/k}+ C_1\operatorname*{osc}_{\partial\Omega}u,
\end{align*}
where $C$ is the same constant as in \eqref{formu2new1}, and $C_1=|\partial\Omega|$.
\end{proposition}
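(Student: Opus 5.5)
The plan is to reduce to smooth functions, combine Lemma \ref{lem21} and Lemma \ref{lem22} to get an $L^1$ gradient bound that is uniform along an approximating sequence, and then pass to the limit using lower semicontinuity of total variation.

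\textbf{Step 1: smooth approximation.} Approximate $u$ by smooth $k$-subharmonic functions with boundary data close to $u$. The boundary data $\varphi=u|_{\partial\Omega}$ is continuous. For $f_j$ I take mollifications of $\mu$ (plus a small $\delta_j>0$ to keep nondegeneracy), so that $f_j\,dx\rightharpoonup\mu$ and $\int_\Omega f_j\to\mu(\Omega)$; one should use mollifications of $\mu$ restricted to slightly smaller subdomains, so that no mass is gained. Approximate $\varphi$ uniformly by smooth $\varphi_j$. On the $(k-1)$-convex domain $\Omega$, the Caffarelli--Nirenberg--Spruck theory \cite{CNS85Dirichlet} gives $u_j\in\Phi^k(\Omega)\cap C^\infty(\overline\Omega)$ solving $S_k[u_j]=f_j$ in $\Omega$ with $u_j=\varphi_j$ on $\partial\Omega$. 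By the stability and uniqueness of weak solutions with continuous boundary data (Trudinger--Wang weak continuity together with the comparison principle), $u_j\to u$ locally uniformly, or at least in $L^1_{loc}$.

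\textbf{Step 2: gradient bound for each $u_j$.} Split $u_j$ into two comparison functions, both smooth up to the boundary and obtained again from \cite{CNS85Dirichlet}:
\begin{itemize}
\item $w_j$, the $k$-subharmonic solution of $S_k[w_j]=f_j$ with $w_j=0$ on $\partial\Omega$;
\item $v_j$, the $k$-subharmonic solution of $S_k[v_j]=\delta_j$ (essentially $k$-harmonic) with $v_j=\varphi_j$ on $\partial\Omega$.
\end{itemize}
Then $u_j\leqs v_j$ by comparison, since $S_k[u_j]\geqs S_k[v_j]$ with equal boundary values. Also $w_j+v_j$ is a $k$-subharmonic subsolution with $S_k[w_j+v_j]\geqs S_k[w_j]=f_j$, by concavity and monotonicity of $\sigma_k^{1/k}$ on $\Gamma_k$, and it equals $\varphi_j$ on the boundary. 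Hence $w_j+v_j\leqs u_j$, that is, $w_j\leqs u_j-v_j$. Lemma \ref{lem21} applied to the pair $(u_j,v_j)$ with $w=w_j$ then gives
\[
\int_\Omega|\nabla u_j|\leqs\int_\Omega|\nabla v_j|+\int_\Omega|\nabla w_j|.
\]
Lemma \ref{lem22} bounds the last term by $C(\int_\Omega f_j)^{1/k}$.

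\textbf{Step 3: the $k$-harmonic part.} To bound $\int_\Omega|\nabla v_j|$ by $|\partial\Omega|\operatorname*{osc}\varphi_j$ (plus $o(1)$ as $\delta_j\to0$), I would mimic the proof of Lemma \ref{lem22}. First use $\Delta v_j\geqs0$ and the maximum principle, so that $m\leqs v_j\leqs M$ with $m=\min\varphi_j$, $M=\max\varphi_j$. Then apply the coarea formula:
\[
\int_\Omega|\nabla v_j|=\int_m^M\mathcal H^{n-1}(\{v_j=t\})\,dt.
\]
Since $v_j$ is subharmonic, each sublevel set $\{v_j<t\}$ is a subset of $\Omega$. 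The main obstacle is to bound the perimeter of these level sets by $|\partial\Omega|$. I would try the divergence theorem with the unit normal field $\nabla v_j/|\nabla v_j|$, whose divergence is the mean curvature of the level sets; this is nonnegative by $k$-subharmonicity ($k\geqs2$, as in the computation in Lemma \ref{lem21}). Integrating $\mathrm{div}(\nabla v_j/|\nabla v_j|)\geqs0$ over $\{v_j>t\}$ gives $\mathcal H^{n-1}(\{v_j=t\})\leqs|\partial\Omega\cap\{\varphi_j>t\}|\leqs|\partial\Omega|$, using an $\varepsilon$-regularization $\nabla v_j/\sqrt{\varepsilon^2+|\nabla v_j|^2}$ exactly as in Lemma \ref{lem21}. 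This yields $\int|\nabla v_j|\leqs|\partial\Omega|(M-m)$, which matches $C_1=|\partial\Omega|$.

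\textbf{Step 4: passage to the limit.} The bound $\|\nabla u_j\|_{L^1}\leqs C(\int f_j)^{1/k}+|\partial\Omega|\operatorname*{osc}\varphi_j$ is uniform in $j$. Together with $u_j\to u$ in $L^1$, lower semicontinuity of total variation gives $u\in BV$ with the stated bound. Since $u\in W^{1,1}_{loc}$ by the Trudinger--Wang gradient estimate, the distributional gradient has no singular part in the interior. The gradient of $u$ therefore belongs to $L^1(\Omega)$, and $u\in W^{1,1}(\Omega)$ with the desired estimate.
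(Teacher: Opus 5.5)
Your argument is essentially the paper's. You use the same splitting $w_j+v_j\leqs u_j\leqs v_j$ combined with Lemma \ref{lem21} and Lemma \ref{lem22}, the same field $\nabla v_j/\sqrt{\varepsilon^2+|\nabla v_j|^2}$ with nonnegative divergence for the $k$-harmonic part, and lower semicontinuity to pass to the limit. Your coarea computation in Step 3 is the slicewise form of the paper's single identity for $\int_\Omega\operatorname{div}[X_{j,\varepsilon}(v_j-m_j)]$. Integrating your bound $\mathcal H^{n-1}(\{v_j=t\})\leqs|\partial\Omega\cap\{\varphi_j>t\}|$ in $t$ gives exactly \eqref{formu28}, namely $\int_\Omega|\nabla v_j|\leqs\int_{\partial\Omega}(\varphi_j-\inf_\Omega v_j)$. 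The paper's version avoids Sard's theorem and the divergence theorem on the sets $\{v_j>t\}$, whose boundaries have corners where they meet $\partial\Omega$.

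One claim in Step 3 is false, though it is easy to repair. Subharmonicity and the maximum principle give $v_j\leqs\max\varphi_j$, but not $v_j\geqs\min\varphi_j$. A function with $S_k[v_j]=\delta_j>0$ dips below its boundary minimum: on $\Omega=B_1$ with $\varphi_j\equiv0$, the solution is $v_j=c_{n,k}\delta_j^{1/k}(|x|^2-1)<0$. So the range of $t$ in your coarea integral is $[\inf_\Omega v_j,\max\varphi_j]$, not $[\min\varphi_j,\max\varphi_j]$. You then need the lower bound
\[
\inf_\Omega v_j\geqs\min_{\partial\Omega}\varphi_j-C\delta_j^{1/k}.
\]
This is the paper's \eqref{formu27}; it also follows from comparing $v_j$ with the quadratic $\min\varphi_j+c_{n,k}\delta_j^{1/k}(|x-x_0|^2-\operatorname{diam}(\Omega)^2)$. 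This bound is exactly what produces the $o(1)$ term you anticipated, and with it the rest of Step 3 goes through.

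Your one-shot approximation of $\mu$ differs from the paper, which first treats compactly supported $\mu$ and then truncates with $\chi_i\mu$. Your version still requires identifying the $L^1_{loc}$ limit of $u_j$ with $u$. You assert this from uniqueness, at roughly the same level of detail as the paper.
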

\begin{proof}
Denote $g=u\big|_{\partial\Omega}$. Let $g_j\in C^\infty(\partial\Omega)$ be a sequence converging to $g$ in $C(\partial\Omega)$. We first assume that $\mu$ is compactly supported in $\Omega'\Subset\Omega$. Let $f_j$ be a sequence of nonnegative functions in $C^\infty_c(\Omega')$ converging weakly as measures to $\mu$. From \cite{CNS85Dirichlet}, there exists a smooth solution $u_j$ of
\begin{equation*}
\left\{
\begin{array}{cc}
S_k[u_j]=f_j+2\delta_j &\text{in }\Omega,\\
u_j=g_j &\text{on }\partial\Omega,
\end{array}
\right.
\end{equation*}
where $\delta_j>0$ tends to zero as $j\to\infty$. Let $v_j$, $w_j$ be the solution of
\begin{equation*}
\left\{
\begin{array}{cc}
S_k[v_j]=\delta_j  & \text{in }\Omega \\
v_j=g_j  & \text{on }\partial\Omega
\end{array}
\right.\quad\text{and}\quad
\left\{
\begin{array}{cc}
S_k[w_j]=f_j+\delta_j  & \text{in }\Omega \\
w_j=0  & \text{on }\partial\Omega
\end{array}
\right..
\end{equation*}
By the concavity of $\sigma_k^{1/k}$, $S_k[v_j+w_j]\geqs S_k[v_j]+S_k[w_j]=S_k[u_j]$. Since $u_j=v_j=v_j+w_j$ on $\partial\Omega$, by the comparison principle, we have $v_j+w_j\leqs u_j\leqs v_j$ in $\Omega$. By Lemma \ref{lem21},
\begin{align}
0\leqs\int_\Omega|\nabla u_j|-\int_\Omega|\nabla v_j|\leqs\int_\Omega|\nabla w_j|.
\label{formu26}
\end{align}

Let $m_j=\inf_\Omega v_j$. By the $L^\infty$-estimate for solutions, see \cite[Theorem 5.5]{Wangnote},
\begin{align}
|m_j|\leqs|\inf_{\partial\Omega}g_j|+C\delta_j^{1/k}.
\label{formu27}
\end{align}
For $\varepsilon>0$, denote $X_{j,\varepsilon}=\frac{\nabla v_j}{\sqrt{\varepsilon^2+|\nabla v_j|^2}}$. As in the proof of Lemma \ref{lem21}, we have $|X_{j,\varepsilon}|\leqs1$ and $\mathrm{div}X_{j,\varepsilon}\geqs0$. Therefore,
\begin{align*}
\int_\Omega\mathrm{div}[X_{j,\varepsilon}(v_j-m_j)]= \int_\Omega\frac{|\nabla v_j|^2}{\sqrt{\varepsilon^2+ |\nabla v_j|^2}}+\int_\Omega\mathrm{div}X_{j,\varepsilon}(v_j-m_j)\geqs\int_\Omega\frac{|\nabla v_j|^2}{\sqrt{\varepsilon^2+ |\nabla v_j|^2}}.
\end{align*}
Applying the divergence theorem, we obtain
\begin{align*}
\int_\Omega\frac{|\nabla v_j|^2}{\sqrt{\varepsilon^2+ |\nabla v_j|^2}}\leqs\int_{\partial\Omega}\langle X_{j,\varepsilon},\nu\rangle(v_j-m_j)\leqs \int_{\partial\Omega}g_j-m_j.
\end{align*}
By taking $\varepsilon\to0$, we get
\begin{align}
\int_\Omega|\nabla v_j|\leqs\int_{\partial\Omega}g_j-m_j.
\label{formu28}
\end{align}
On the other side, by Lemma \ref{lem22},
\begin{align}
\int_\Omega|\nabla w_j|\leqs C\Big(\int_\Omega S_k[w_j] \Big)^{1/k}=C\Big(\int_\Omega f_j+\delta_j\Big)^{1/k}.
\label{formu29}
\end{align}
Combining \eqref{formu26}$\sim$\eqref{formu29}, we obtain
\begin{align}
\int_\Omega|\nabla u_j|\leqs C\Big(\int_\Omega f_j \Big)^{1/k}+\int_{\partial\Omega}(g_j-\inf_{\partial\Omega} g_j)+\widetilde C\delta_j^{1/k}.
\label{formu210}
\end{align}

By the construction of $u_j$, we have $\{u_j\}$ converges to $u$ in $L^1_{loc}(\Omega)\cap C^0(\Omega-\overline{\Omega'})$. We refer the reader to \cite[Section 6]{TW97measureII} for details. Thus, $u_j$ converges to $u$ almost everywhere in $\Omega$. Since $\Phi^k(\Omega)\subset W^{1,q}_{loc}(\Omega)$ for $q<\frac{nk}{n-k}$, $Du_j$ converges weakly in $L^q_{loc}(\Omega)$ to $Du$. By the weak lower semi-continuity, passing to the limit in \eqref{formu210} yields, for any $K\Subset\Omega$,
\begin{align*}
\int_K|\nabla u|\leqs \liminf_{j\to\infty}\int_K|\nabla u_j|\leqs C\mu(\Omega)^{1/k}+\int_{\partial\Omega}(g-\inf_{\partial\Omega} g).
\end{align*}
Hence,
\begin{align*}
\int_\Omega|\nabla u|\leqs C\mu(\Omega)^{1/k}+ |\partial\Omega|\operatorname*{osc}_{\partial\Omega}g.
\end{align*}

Now we drop the assumption that $\mu$ is compactly supported in $\Omega$. Let $0\leqs\chi_i\leqs1$ be such that $\chi_i(x)=1$ for $\mathrm{dist}(x,\partial\Omega)\geqs\frac{1}{i}$ and $\chi_i(x)=0$ for $\mathrm{dist}(x,\partial\Omega)\leqs\frac{1}{2i}$. Then $u$ is an admissible viscosity subsolution to the following Dirichlet problem
\begin{equation}
\left\{
\begin{array}{cc}
\mu_k[u]=\chi_i\mu &\text{in }\Omega,\\
u=g &\text{on }\partial\Omega.
\end{array}
\right.
\label{formu211}
\end{equation}
Thus, there exists a viscosity solution $u_i\in\Phi^k(\Omega)$ to \eqref{formu211} and $u_i\geqs u$ in $\Omega$. By a similar argument as above, $Du_i$ converges weakly in $L^q_{loc}(\Omega)$ to $Du$ and for any $K\Subset\Omega$,
\begin{align*}
\int_K|\nabla u|\leqs \liminf_{i\to\infty}\int_K|\nabla u_i|\leqs C\mu(\Omega)^{1/k}+ |\partial\Omega|\operatorname*{osc}_{\partial\Omega}g.
\end{align*}
By the arbitrariness of $K$, we finally conclude the proof.
\end{proof}

\begin{remark}
Proposition \ref{pro21} shows that any continuous k-subharmonic function which has finite $k$-Hessian measure belongs to $W^{1,1}(\Omega)$. It was recently proven in \cite{Gehring26nonsmooth} that the following Dirichlet problem is solvable in $\Phi^k(\Omega)\cap C^0(\overline{\Omega})$ for $k>n/2$,
\begin{equation*}
\left\{
\begin{array}{cc}
\mu_k[u]=\mu &\text{in }\Omega,\\
u=g &\text{on }\partial\Omega,
\end{array}
\right.
\end{equation*}
provided that $\mu(\Omega)<\infty$ and $g\in C(\partial\Omega)$. Under this case, the solution belongs to $W^{1,1}(\Omega)$.
\end{remark}

\begin{corollary}\label{cor21}
Under the hypotheses of Proposition \ref{pro21}, let $\Omega=B_R$ and $u\in C^{0,1}(\partial B_R)$. Then $u$ satisfies
\begin{align}
\frac{1}{R^n}\int_{B_R}|\nabla u|\leqs C(n,k)\Big(\frac{\mu(B_R)}{R^{n-k}}\Big)^{1/k}+C_n\max_{\partial B_R}|\nabla_\tau u|,
\label{formu214}
\end{align}
where $\tau$ ranges over all unit tangent vectors to $\partial B_R$.
\end{corollary}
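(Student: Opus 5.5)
Apply Proposition \ref{pro21} with $\Omega=B_R$ and then renormalise. A ball is $(k-1)$-convex, with connected boundary and $\sigma_{k-1}(\kappa)=\binom{n-1}{k-1}R^{1-k}$. So Proposition \ref{pro21} gives
\begin{align*}
\int_{B_R}|\nabla u|\leqs C\mu(B_R)^{1/k}+|\partial B_R|\operatorname*{osc}_{\partial B_R}u,
\end{align*}
where, by the ball case of Lemma \ref{lem22}, $C=C(n,k)R^{\frac{n(k-1)+k}{k}}$.

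For the first term, divide by $R^n$ and collect the powers of $R$:
\begin{align*}
R^{-n}\,R^{\frac{n(k-1)+k}{k}}\,\mu(B_R)^{1/k}=R^{\frac{k-n}{k}}\mu(B_R)^{1/k}=\Big(\frac{\mu(B_R)}{R^{n-k}}\Big)^{1/k}.
\end{align*}
This is exactly the first term of \eqref{formu214}.

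For the second term, $|\partial B_R|=n\omega_nR^{n-1}$, so after dividing by $R^n$ it becomes $n\omega_n R^{-1}\operatorname*{osc}_{\partial B_R}u$. It therefore suffices to show
\begin{align*}
\operatorname*{osc}_{\partial B_R}u\leqs \pi R\max_{\partial B_R}|\nabla_\tau u|.
\end{align*}
Take any two points $y,z\in\partial B_R$ and join them by a great-circle arc on the sphere. This arc has length at most $\pi R$. Since $u$ is Lipschitz on $\partial B_R$, it is absolutely continuous along the arc, and its derivative along the arc is a tangential derivative. Integrating along the arc gives $|u(y)-u(z)|\leqs \pi R\,\|\nabla_\tau u\|_{L^\infty(\partial B_R)}$. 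This yields \eqref{formu214} with $C_n=\pi n\omega_n$.

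The main subtlety is interpreting $\max|\nabla_\tau u|$ when $u$ is only Lipschitz on the sphere. I would read it as the essential supremum of the tangential gradient, which exists almost everywhere by Rademacher's theorem on the manifold $\partial B_R$. Equivalently, it is the Lipschitz constant of $u$ with respect to the geodesic distance on $\partial B_R$. The arc argument applies to almost every nearby arc, which suffices by continuity of $u$. Everything else is direct substitution into Proposition \ref{pro21}, whose hypotheses ($u\in\Phi^k\cap C^0(\overline{B_R})$ and $\mu(B_R)<\infty$) are exactly those assumed here.
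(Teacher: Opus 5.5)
Your proposal is correct and follows essentially the paper's own proof: apply Proposition~\ref{pro21} on $B_R$ with the ball constant $C(n,k)R^{\frac{n(k-1)+k}{k}}$ from Lemma~\ref{lem22}, divide by $R^n$, and bound $\operatorname*{osc}_{\partial B_R}u$ by integrating the tangential derivative along a curve on the sphere. Your great-circle choice gives the explicit constant $C_n=\pi n\omega_n$, and reading $\max|\nabla_\tau u|$ as an essential supremum for Lipschitz boundary data is a sensible clarification that the paper leaves implicit.
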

\begin{proof}
By Lemma \ref{lem22} and Proposition \ref{pro21},
\begin{align}
\int_{B_R}|\nabla u|\leqs C(n,k)R^{\frac{n(k-1)+k}{k}}\mu(B_R)^{1/k}+n\omega_nR^{n-1}\operatorname*{osc}_{\partial B_R}u.
\label{formu215}
\end{align}
Let $M=\max_{\partial B_R}|\nabla_\tau u|$. Then for any $x,y\in\partial B_R$,
\begin{align*}
|u(x)-u(y)|\leqs\int_\gamma|D_\tau u|ds\leqs c_nRM,
\end{align*}
where $\gamma\subset\partial B_R$ is a $C^1$ curve connecting $x$ and $y$. Taking the supremum over $x,y\in\partial B_R$, we have $\operatorname*{osc}_{\partial B_R}u\leqs c_nRM$. Then, dividing \eqref{formu215} by $R^n$ yields
\begin{align*}
\frac{1}{R^n}\int_{B_R}|\nabla u|\leqs C(n,k)\Big(\frac{\mu(B_R)}{R^{n-k}}\Big)^{1/k}+C_nM.
\end{align*}
\end{proof}
\begin{remark}\label{rmk23}
It is obvious that $|\nabla_\tau u|\leqs|\nabla u|$ on $\partial B_R$. In fact, estimate \eqref{formu214} shows that the average of $|\nabla u|$ over a ball is controlled by the boundary values of $|\nabla u|$ and a potential term involving the $k$-Hessian measure. This estimate will be used in the next section.
\end{remark}

\section{Interior Gradient Estimates}
We now establish the interior gradient estimates for $k$-subharmonic function by means of the Wolff potential of its $k$-Hessian measure. The proof is based on the method developed by Labutin\,\cite{Labutin02potential}.

\subsection{Lipschitz Stability Lemma}
We start with an algebraic lemma for $\sigma_k$. For matrices $A=(a_{ij})$, we write $A\in\Gamma_k$ if $\lambda(A)\in\Gamma_k$ and denote $\sigma_k(A)=\sigma_k(\lambda(A))$, $T(A)=D_A\sigma_k(A)$.
\begin{lemma}\label{lem31}
Let $A\in\Gamma_k$. If $e\in\mathbb{S}^{n-1}$ satisfy $A[e,e]\leqs0$, then
\begin{align}
T(A)[e,e]\geqs\frac{\operatorname{tr}T(A)}{n-k+1}.
\label{formu31}
\end{align}
\end{lemma}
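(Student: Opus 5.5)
The plan is to reduce \eqref{formu31} to a comparison between $\sigma_{k-1}$ of $A$ and $\sigma_{k-1}$ of a principal $(n-1)\times(n-1)$ submatrix of $A$.

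\textbf{Step 1: rewriting both sides.} We choose an orthonormal basis whose first vector is $e$. The remaining basis vectors span $e^\perp$, and we rotate them so that the lower-right $(n-1)\times(n-1)$ block $A'$ of $A$ is diagonal, $A'=\operatorname{diag}(\mu_2,\ldots,\mu_n)$. The first row of $A$ then reads $(a_{11},a_{12},\ldots,a_{1n})$, with $a_{11}=A[e,e]\leqs0$. Since $\sigma_k(A)$ is a sum of $k\times k$ principal minors, its derivative in $a_{11}$ is
\begin{align*}
T(A)[e,e]=\frac{\partial\sigma_k(A)}{\partial a_{11}}=\sigma_{k-1}(A')=\sigma_{k-1}(\mu).
\end{align*}
We also use the standard identity $\operatorname{tr}T(A)=\sum_i\sigma_{k-1}(\lambda|i)=(n-k+1)\sigma_{k-1}(A)$. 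Hence \eqref{formu31} is equivalent to
\begin{align*}
\sigma_{k-1}(\mu)\geqs\sigma_{k-1}(A).
\end{align*}
For $k=1$ there is nothing to prove, since $T=I$, so we assume $k\geqs2$.

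\textbf{Step 2: expanding $\sigma_{k-1}(A)$.} We sort the $(k-1)\times(k-1)$ principal minors of the arrowhead-type matrix $A$ according to whether they contain index $1$. Minors without index $1$ sum to $\sigma_{k-1}(\mu)$. Minors containing index $1$ are bordered diagonal determinants, and each equals $a_{11}\prod\mu_j-\sum_i a_{1i}^2\prod_{j\neq i}\mu_j$. Summing gives
\begin{align*}
\sigma_{k-1}(A)=a_{11}\sigma_{k-2}(\mu)+\sigma_{k-1}(\mu)-\sum_{i=2}^n a_{1i}^2\,\sigma_{k-3}(\mu|i),
\end{align*}
with the convention $\sigma_{-1}=0$ and $\sigma_0=1$.

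\textbf{Step 3: signs.} This is the main point. We need $\sigma_{k-2}(\mu)\geqs0$ and $\sigma_{k-3}(\mu|i)\geqs0$. We first invoke the known fact that if $A\in\Gamma_k$, then the eigenvalues of any $(n-1)\times(n-1)$ principal submatrix lie in $\Gamma_{k-1}$. This can be proved by interlacing together with the characterization $\lambda\in\Gamma_k$ iff $\sigma_k(\lambda+t\mathbf 1)>0$ for all $t\geqs0$, or taken from \cite{Wangnote}. So $\mu\in\Gamma_{k-1}$. It follows that $\sigma_{k-2}(\mu)>0$ and $\sigma_{k-3}(\mu|i)>0$, because $\mu|i\in\Gamma_{k-2}$ (for $k=2$ the last sum vanishes). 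Combined with $a_{11}\leqs0$, both correction terms in the expansion are nonpositive, and therefore $\sigma_{k-1}(A)\leqs\sigma_{k-1}(\mu)$. This proves \eqref{formu31}.

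The only nontrivial ingredient is the submatrix fact $\mu\in\Gamma_{k-1}$. If a self-contained proof is wanted, I would argue as follows. By Cauchy interlacing, $\lambda(A)+t\mathbf1$ and $\mu+t\mathbf1$ interlace, and the derivative-type identity $\sum_i\sigma_{k-1}(\lambda|i)$ together with the Gårding-cone monotonicity then gives $\sigma_{j}(\mu+t\mathbf1)>0$ for $j\leqs k-1$ and $t\geqs0$.
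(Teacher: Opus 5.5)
Your proof is correct and is essentially the paper's argument. Both choose an orthonormal basis with $e_1=e$, use $T(A)[e,e]=\sigma_{k-1}(B)$ and $\operatorname{tr}T(A)=(n-k+1)\sigma_{k-1}(A)$, and apply $B\in\Gamma_{k-1}$ to the expansion $\sigma_{k-1}(A)=\sigma_{k-1}(B)+a\,\sigma_{k-2}(B)-b^TD\sigma_{k-2}(B)\,b$. Diagonalizing $B$ simply writes the last term as $\sum_i a_{1i}^2\sigma_{k-3}(\mu|i)$, and you also state explicitly the sign $\sigma_{k-2}(\mu)>0$ that the paper leaves implicit.
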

\begin{proof}
Choose an orthonormal basis $\{e_i\}_{i=1}^n$ such that $e_1=e$. $A$ can be written as
\begin{equation*}
A=
\begin{pmatrix}
a & b^T \\
b & B
\end{pmatrix},\qquad
a=A[e,e]\leqs0.
\end{equation*}
Then $B\in\Gamma_{k-1}$ and $T(A)[e,e]=\sigma_{k-1}(B)>0$. By direct calculation,
\begin{align*}
\sigma_{k-1}(A)=\sigma_{k-1}(B)+a\sigma_{k-2}(B)-b^T D\sigma_{k-2}(B)b\leqs\sigma_{k-1}(B),
\end{align*}
since $D\sigma_{k-2}(B)$ is positive semidefinite. Applying
\begin{align*}
\sigma_{k-1}(A)=\frac{\operatorname{tr}T(A)}{(n-k+1)} \quad\text{and}\quad\sigma_{k-1}(B)=T(A)[e,e],
\end{align*}
we obtain the desired inequality.
\end{proof}

Next, we prove the Lipschitz stability lemma for latter applications. Our proof adapts the two-point maximum principle  argument by Ishii and Lions\,\cite{IL90viscosity}. The key tool is the Hessian comparison at the maximum value, which is obtained by testing the second variation along the reflected pairs $(\xi,R\xi)$, see \eqref{formu34}.
\begin{lemma}\label{lem32}
Let $u\in\Phi^k(B_r)$ satisfy $S_k[u]=c\geqs0$. Then for any Lipschitz function $v$,
\begin{align}
\operatorname{Lip}_{B_{r/2}}u\leqs\operatorname{Lip}_{B_r}v+\frac{C(n,k)}{r}\Vert u-v\Vert_{L^\infty(B_r)}.
\label{formu32}
\end{align}
\end{lemma}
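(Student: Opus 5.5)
We would follow the Ishii--Lions doubling-of-variables argument indicated just before the statement. Normalize $r=1$ by scaling; both sides of \eqref{formu32} scale correctly. Set $L=\operatorname{Lip}_{B_1}v$ and $\theta=\Vert u-v\Vert_{L^\infty(B_1)}$. Fix $x_0\in B_{1/2}$ and consider
\begin{align*}
\Psi(x,y)=u(x)-u(y)-L'|x-y|-K\theta|x-x_0|^2-K\theta|y-x_0|^2,\qquad x,y\in\overline{B_1},
\end{align*}
where $L'=L+C\theta$ and $K$, $C$ are dimensional constants to be chosen. The claim is that $\Psi\leqs0$. Once this holds, taking $y=x_0$ gives $u(x)-u(x_0)\leqs L'|x-x_0|+K\theta|x-x_0|^2$. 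Letting $x\to x_0$ and varying $x_0$ then yields the local Lipschitz bound $L+C\theta$ on $B_{1/2}$.

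Suppose instead that $\max\Psi>0$, attained at $(\bar x,\bar y)$ with $\bar x\neq\bar y$. We first rule out boundary maxima. Writing $u(\bar x)-u(\bar y)\leqs v(\bar x)-v(\bar y)+2\theta\leqs L|\bar x-\bar y|+2\theta$, positivity of $\Psi$ forces $K\theta(|\bar x-x_0|^2+|\bar y-x_0|^2)<2\theta$. With $K=32$ this keeps both points inside $B_{3/4}$. The same inequality also shows $C\theta|\bar x-\bar y|<2\theta$, so $|\bar x-\bar y|$ is small relative to $1/C$. This smallness is what lets the penalization dominate.

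Next we apply the theorem of sums (Crandall--Ishii--Lions) to the viscosity sub/supersolution $u$ at $\bar x$ and $\bar y$. This gives symmetric matrices $X,Y$ with $(p+2K\theta(\bar x-x_0),X)$ in the superjet of $u$ at $\bar x$ and $(p-2K\theta(\bar y-x_0),Y)$ in the subjet at $\bar y$, where $p=L'\frac{\bar x-\bar y}{|\bar x-\bar y|}$. The matrix inequality takes the form
\begin{align*}
\begin{pmatrix}X&0\\0&-Y\end{pmatrix}\leqs\frac{L'}{|\bar x-\bar y|}\begin{pmatrix}P&-P\\-P&P\end{pmatrix}+4K\theta I+\text{small},
\end{align*}
with $P=I-e\otimes e$ and $e=(\bar x-\bar y)/|\bar x-\bar y|$. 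The key step is to test this inequality along the reflected pairs $(\xi,R\xi)$, where $R=I-2e\otimes e$. For $\xi=e$ the pair $(e,-e)$ gives $X[e,e]-Y[e,e]\leqs -4L'/|\bar x-\bar y|+O(K\theta)$, which is very negative. For $\xi\perp e$ the pair $(\xi,\xi)$ gives $X-Y\leqs 4K\theta$ in the tangential directions.

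The equation provides $S_k[X]\geqs c$ from the subsolution side and $S_k[Y]\leqs c$ with $Y\in\overline{\Gamma}_k$ from the supersolution side; for admissible viscosity solutions we may perturb so that $Y\in\Gamma_k$. Set $M=X-Y$. Concavity of $\sigma_k^{1/k}$ on $\Gamma_k$ gives $0\leqs S_k[X]-S_k[Y]\leqs \operatorname{tr}(T(Y)M)$, up to the usual normalization of the concavity inequality. Decomposing along $e$ and $e^\perp$, we get $\operatorname{tr}(T(Y)M)\leqs T(Y)[e,e]\,M[e,e]+4K\theta\operatorname{tr}T(Y)$. The off-diagonal terms also need control, which should follow from the block structure of the sum inequality.

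Lemma \ref{lem31} supplies the needed lower bound $T(Y)[e,e]\geqs\operatorname{tr}T(Y)/(n-k+1)$, provided $Y[e,e]\leqs0$. We arrange this by rotating the roles or by taking the direction where $Y$ is nonpositive: since $X[e,e]-Y[e,e]$ is hugely negative and $X$ is bounded above in terms of $Y$, we can choose the appropriate matrix. Combining the pieces gives
\begin{align*}
0\leqs\operatorname{tr}T(Y)\Big(-\frac{4L'}{(n-k+1)|\bar x-\bar y|}+CK\theta\Big)<0,
\end{align*}
because $L'/|\bar x-\bar y|\geqs C\theta/|\bar x-\bar y|$ and $|\bar x-\bar y|$ is small. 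This contradiction shows $\Psi\leqs0$.

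We expect the main obstacle to be making the application of Lemma \ref{lem31} legitimate. This requires securing $Y[e,e]\leqs0$, or the analogous condition for $X$. We would try first to exploit that $S_k[X]\geqs S_k[Y]$ with $M[e,e]\ll0$: if $Y[e,e]>0$ large, we instead use concavity at $X$ with $X[e,e]\leqs0$ after shifting. Failing that, we would add a term $-\delta|x-\bar x|^2$-type correction so that the $e$-direction eigenvalue becomes nonpositive. A second technical point is handling off-diagonal terms and degenerate cases $c=0$; this should be manageable by approximating by $c+\delta$.
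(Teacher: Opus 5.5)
\textbf{There is a genuine gap at the central step, and a second one in how you apply Lemma~\ref{lem31}.}

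\textbf{The linear penalty gives no negative direction.} The term $L'|x-y|$ is linear in $s=|x-y|$, so it produces no negative second-order term. The Hessian of $(x,y)\mapsto L'|x-y|$ is $\frac{L'}{s}\begin{pmatrix}P&-P\\-P&P\end{pmatrix}$. Because $Pe=0$, testing with $(e,-e)$ gives only $X[e,e]-Y[e,e]\le O(K\theta)$. The term $-4L'/|\bar x-\bar y|$ you wrote does not appear: for a penalty $\phi(s)$ that slot equals $4\phi''(s)$, which vanishes here. For $\xi\perp e$, the $P$-block contributes $0$ on $(\xi,\xi)$ and $+4L'|\xi|^2/s$ on $(\xi,-\xi)$. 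So the sum inequality yields only $X\le R^TYR+O(K\theta)I$. This is compatible with $\sigma_k(X)=\sigma_k(Y)=c$, so no contradiction can be extracted. The extra $C\theta$ in $L'$ only localizes $|\bar x-\bar y|$.

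What is needed is a strictly concave modulus. The paper uses $\theta\phi(|x-y|)$ with $\phi(t)=\frac{8}{\tau}(t-\frac{t^2}{2\tau})$ on $[0,\tau]$. Then $\phi''=-8/\tau^2$ dominates the localization constant once $\tau$ is small, $\phi(\tau)=4$ excludes $s=\tau$, and $\phi'(0)=8/\tau$ is the source of the $C\theta$ in the final bound. The paper localizes at the midpoint $(x+y)/2$, which costs nothing in the $(e,-e)$ direction, but your localization also works once $|\phi''|\gg K$.

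\textbf{Lemma~\ref{lem31} cannot be applied at $Y$.} The reason is structural. Once $\phi''<0$ is present, testing the sum inequality with $(0,e)$ (varying $y$ alone) gives $Y[e,e]\ge\theta(|\phi''|-O(K))>0$. So the hypothesis $Y[e,e]\le0$ fails, and no rotation or quadratic correction repairs it. What is available is $X[e,e]\le\theta(\phi''+O(K))\le0$, from testing with $(e,0)$. However, linearizing the concave function $\sigma_k^{1/k}$ at $X$ only bounds $\sigma_k^{1/k}(X)-\sigma_k^{1/k}(R^TYR)$ from below, which is the wrong direction.

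The missing idea is the paper's monotone deformation $X_\gamma=X+\gamma\theta(\beta e\otimes e-I)$, with $\beta=n-k+1$ and $0\le\gamma\le\delta$, $\delta\sim|\phi''|/\beta$.
\begin{itemize}
\item Along this path $X_\gamma[e,e]$ stays $\le0$, so Lemma~\ref{lem31} gives $\frac{d}{d\gamma}\sigma_k(X_\gamma)=\theta\big(\beta T(X_\gamma)[e,e]-\operatorname{tr}T(X_\gamma)\big)\ge0$.
\item The reflected-pair inequality $X-R^TYR\le\theta\big(4\phi''e\otimes e+O(K)I\big)$ gives $X_\delta\le R^TYR-c\theta I$.
\item Hence $\sigma_k(X)\le\sigma_k(X_\delta)<\sigma_k(R^TYR)=\sigma_k(Y)$, a contradiction.
\end{itemize}
Comparing with $R^TYR$ instead of $Y$ also disposes of the off-diagonal terms you were worried about, since the reflected pairs control $X-R^TYR$ as a full matrix. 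Your viscosity framework via the theorem of sums is a sound setting, and more careful than treating $D^2u$ pointwise. Without these two ingredients, however, the argument does not close.
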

\begin{proof}
We first assume that $r=1$. Let $\tau>0$ to be determined later. Set
\begin{align*}
\phi(t)=\frac{8}{\tau}(t-\frac{t^2}{2\tau})\quad\text{for }t\in[0,\tau].
\end{align*}
Then $\phi(\tau)=4$, $0\leqs\phi'\leqs\frac{8}{\tau}$ and $\phi''=-\frac{8}{\tau^2}$.

Fix any $x_0\in B_{1/2}$. Denote
\begin{align*}
M=\operatorname{Lip}_{B_1}v,\quad L=64,\quad s=|x-y|,\quad z=\frac{x+y}{2}\quad\text{and}\quad\varepsilon=\Vert u-v\Vert_{L^\infty(B_1)}.
\end{align*}
Consider the auxiliary two-point function $Z(x,y)$
\begin{align*}
Z(x,y)=u(x)-u(y)-Ms-\varepsilon(\phi(s)+L|z-x_0|^2),
\end{align*}
defined on $z\in\overline{B_{1/4}(x_0)}$ and $s\in[0,\tau]$. Let $\tau>0$ be small enough so that $x,y\in B_{7/8}$.

Since $M=\operatorname{Lip}_{B_1}v$, we have
\begin{align*}
u(x)-u(y)-Ms\leqs2\Vert u-v\Vert_{L^\infty(B_1)}+v(x)-v(y)-M|x-y|\leqs2\varepsilon.
\end{align*}
Moreover, $\phi(\tau)=4$ and $L|z-x_0|^2=4$ on $\partial B_{1/4}(x_0)$. Therefore, $Z(x,y)\leqs-2\varepsilon$ on $s=\tau$ or $z\in\partial B_{1/4}(x_0)$. When $s=0$, $Z(x,y)=-\varepsilon L|z-x_0|^2\leqs0$. It follows that any positive maximum value of $Z(x,y)$, if it exists, must be attained in the interior of the domain. Suppose that $Z(x_0,y_0)=\max Z(x,y)>0$ with $x_0\ne y_0$. Denote
\begin{align*}
s_0=|x_0-y_0|,\quad z_0=\frac{x_0+y_0}{2},\quad X=D^2u(x_0)\quad\text{and}\quad Y=D^2u(y_0).
\end{align*}

We next establish a relation between $X$ and $Y$. Set
\begin{align*}
e=\frac{x_0-y_0}{s_0},\quad P=I-e\otimes e\quad\text{and} \quad R=I-2e\otimes e.
\end{align*}
Then $P$ is the orthogonal projection onto $e^{\perp}$ and $R$ is the reflection across $e^{\perp}$.
\begin{itemize}[leftmargin=11pt]
\item Along $\lambda\mapsto(x_0+\lambda e,y_0)$,
\begin{align*}
s'=1,\quad s''=0,\quad z'=\frac{e}{2},\quad z''=0.
\end{align*}
At the maximum point $(x_0,y_0)$,
\begin{align*}
0\geqs\frac{d^2}{d\lambda^2}\bigg|_{\lambda=0}Z(x,y)=X[e,e]-\varepsilon\Big(\phi''+\frac{L}{2}\Big).
\end{align*}
Hence,
\begin{align}
X[e,e]\leqs\varepsilon\Big(-\frac{8}{\tau^2}+\frac{L}{2}\Big).
\label{formu33}
\end{align}
\item Along $\lambda\mapsto(x_0+\lambda\xi,y_0+\lambda R\xi)$, where $\xi=\alpha e+\eta$ and $\eta\perp e$,
\begin{align*}
s'=2\alpha,\quad s''=0,\quad z'=\eta,\quad z''=0.
\end{align*}
Similarly, at the maximum point $(x_0,y_0)$,
\begin{align*}
0\geqs\frac{d^2}{d\lambda^2}\bigg|_{\lambda=0}Z(x,y)= X[\xi,\xi]-Y[R\xi,R\xi]-\varepsilon\Big(\phi''(2\alpha)^2+2L|\eta|^2\Big).
\end{align*}
Since $\langle P\xi,\xi\rangle=\langle\eta,\xi\rangle=|\eta|^2$, we obtain
\begin{align}
X-R^TYR\leqs\varepsilon\Big(-\frac{32}{\tau^2}e\otimes e+2LP\Big).
\label{formu34}
\end{align}
\end{itemize}

Set $\beta=n-k+1$ and $\delta=\frac{4}{\beta\tau^2}$. Consider the one-parameter family of matrices
\begin{align*}
X_\gamma=X+\gamma\varepsilon(\beta e\otimes e-I), \quad\gamma\in[0,\delta].
\end{align*}
By \eqref{formu33},
\begin{align*}
X_\gamma[e,e]=X[e,e]+\gamma\varepsilon(\beta-1)\leqs\varepsilon\Big(-\frac{4}{\tau^2}+\frac{L}{2}\Big)\leqs0,
\end{align*}
when $\tau^2\leqs\frac{8}{L}$. According to Lemma \ref{lem31}, we have
\begin{align*}
\frac{d}{d\gamma}\sigma_k(X_\gamma)=\Big\langle T(X_\gamma),\frac{dX_\gamma}{d\gamma}\Big\rangle =\varepsilon\Big(\beta T(X_\gamma)[e,e]-\operatorname{tr}T(X_\gamma)\Big)\geqs0.
\end{align*}
Since $X\in\Gamma_k$, it follows that $X_\gamma\in \Gamma_k$ for all $\gamma\in[0,\delta]$ and $\sigma_k(X)\leqs\sigma_k(X_\delta)$. On the other hand, by virtue of \eqref{formu34},
\begin{align*}
X_\delta=X+\varepsilon\Big(\frac{4}{\tau^2}e\otimes e-\frac{4}{\beta\tau^2}I\Big)\leqs X+\varepsilon\Big(\frac{32}{\tau^2}e\otimes e-4LI\Big)\leqs R^TYR-2\varepsilon LI,
\end{align*}
provided that $\tau^2\leqs\frac{1}{\beta L}$. Thus,
\begin{align*}
\sigma_k(X)\leqs\sigma_k(X_\delta)<\sigma_k(R^TYR) =\sigma_k(Y),
\end{align*}
which contradicts $\sigma_k(D^2u)=c$. Therefore, $Z(x,y)\leqs0$ holds.

For any $e\in\mathbb{S}^{n-1}$, 
take $x=x_0+he$ and $y=x_0-he$. Then, $z=x_0$ and
\begin{align*}
u(x_0+he)-u(x_0-he)\leqs 2Mh+\varepsilon\phi(2h).
\end{align*}
Dividing both sides by $2h$ and letting $h\to0$, we obtain
\begin{align*}
Du(x_0)\cdot e\leqs M+\frac{8}{\tau}\varepsilon.
\end{align*}
By the arbitrariness of $x_0\in B_{1/2}$ and $e\in\mathbb{S}^{n-1}$, we get
\begin{align*}
\operatorname{Lip}_{B_{1/2}}u\leqs\operatorname{Lip}_{B_{1}}v+C(n,k)\Vert u-v\Vert_{L^\infty(B_1)}.
\end{align*}
The desired estimate \eqref{formu32} then follows by a rescaling.
\end{proof}

\subsection{Interior Gradient Bounds}
We shall employ several estimates in \cite{TW97measureII} and \cite{Labutin02potential}. Following \cite{GT83elliptic}, we define the weighted norms on $C^{0,1}(\Omega)$, for $\sigma\geqs0$,
\begin{align*}
|u|_{0,1;\Omega}^{(\sigma)}=\sup_{x\in\Omega}d_x^\sigma|u(x)|+\sup_{x,y\in\Omega,x\neq y}d_{x,y}^{\sigma+1}\frac{|u(x)-u(y)|}{|x-y|},
\end{align*}
where $d_x=\operatorname{dist}(x,\partial\Omega)$ and $d_{x,y}=\min(d_x,d_y)$.

Let $u\in\Phi^k(B_R)$ satisfy $S_k[u]\equiv c_0\geqs0$ in $B_R$. Then, by \cite[Corollary 3.4]{TW97measureII},
\begin{align*}
|u|_{0,1;B_R}^{(n)}\leqs C(n)\int_{B_R}|u|.
\end{align*}
Accordingly, for $0<r<R$,
\begin{align}
|u(0)|\leqs \frac{C}{r^n}\int_{B_r}|u|
\label{formu35}
\end{align}
and
\begin{align}
|Du(0)|\leqs\frac{C}{r^{n+1}}\int_{B_r}|u|.
\label{formu36}
\end{align}
Applying \eqref{formu36} to $u-u_r$ with $u_r=\fint_{B_r} u$, we obtain
\begin{align}
|Du(0)|\leqs\frac{C}{r^{n+1}}\int_{B_r}|u-u_r| \leqs\frac{C}{r^n}\int_{B_r}|\nabla u|,
\label{formu37}
\end{align}
where the last inequality follows by the $L^1$ Poincar\'e inequality.

Our estimate involves the classical Wolff potential defined by
\begin{align*}
\mathbf{W}_k^\mu(x,r)=\int_0^r\Big(\frac{\mu(B(x,t))}{t^{n-k}}\Big)^{1/k}\frac{dt}{t}.
\end{align*}
For a nonnegative Radon measure $\mu$, a dyadic decomposition of the interval yields the two-sided estimate,
\begin{align}
\frac{1}{C}\sum_{j=1}^\infty\Big(\frac{\mu(B(x,R2^{-j}))}{(R2^{-j})^{n-k}}\Big)^{1/k}\leqs\mathbf{W}_k^\mu(x,R)\leqs C\sum_{j=0}^\infty\Big(\frac{\mu(B(x,R2^{-j}))}{(R2^{-j})^{n-k}}\Big)^{1/k}.
\label{formu38}
\end{align}

The following estimate establishes a relation between the $L^1$-norm of a $k$-subharmonic function and its Hessian measure. Indeed, if $v\in C^\infty(\overline{B}_r)$ satisfies $v=0$ on $\partial B_r$, then
\begin{align}
\frac{1}{r^n}\Big|\int_{B_r}v\Big|\leqs C(n,k)\Big(\frac{1}{r^{n-2k}}\int_{B_r}S_k[v]\Big)^{1/k}.
\label{formu39}
\end{align}
The proof relies on the variational structure of $S_k$ and the Hessian Sobolev inequalities\,\cite{Wang94class}; see \cite[(2.22)]{Labutin02potential} for details. This estimate remains valid for the weak solutions. Indeed, let $v\in\Phi^k(B_r)\cap C^0(B_r-\overline{B_\rho})$ solve $\mu_k(v)=\mu$ in $B_r$ and $v=0$ on $\partial B_r$, where $0<\rho<r$ and $\mu$ is a finite Radon measure on $B_r$. We first assume that $\mu$ is compactly supported in $B_r$. As discussed in Proposition \ref{pro21}, there exists a smooth solution $v_j$ to the Dirichlet problem
\begin{equation*}
\left\{
\begin{array}{cc}
S_k[v_j]=f_j+\delta_j  & \text{in }B_r, \\
v_j=0  & \text{on }\partial B_r,
\end{array}
\right.
\end{equation*}
where $f_j\in C_c^\infty(B_r)$ converges weakly in measure to $\mu$, and $\delta_j>0$ tends to zero as $j\to\infty$. Similarly, $v_j$ converges to $v$ in $L^1_{loc}(B_r)\cap C^0(B_r-\overline{B_\rho})$. For any $K\Subset B_r$, we have
\begin{align*}
\frac{1}{r^n}\Big|\int_{K}v\Big|&=\lim_{j\to\infty}\frac{1}{r^n}\Big|\int_{K}v_j\Big|\\
&\leqs\lim_{j\to\infty}C(n,k)\Big(\frac{1}{r^{n-2k}}\int_{B_r}S_k[v_j]\Big)^{1/k}\\
&=C(n,k)\Big(\frac{\mu(B_r)}{r^{n-2k}}\Big)^{1/k},
\end{align*}
where the inequality follows from \eqref{formu39}. This implies
\begin{align}
\frac{1}{r^n}\Big|\int_{B_r}v\Big|\leqs C(n,k)\Big(\frac{\mu(B_r)}{r^{n-2k}}\Big)^{1/k}.
\label{formu310}
\end{align}
By a truncation argument, we can derive \eqref{formu310} for general finite measure $\mu$.
\\[-0.5em]

Now, we are ready to prove Theorem \ref{thm11}.
~\\[0.3em]
\textbf{Proof of Theorem \ref{thm11}.}

We divide the proof into two steps. The first step is to establish the estimate for the average gradient of smooth functions, involving only a finite number of potential terms in $\eqref{formu38}$. The second step is to extend the estimate to general $k$-subharmonic functions by standard mollification.
\\[0.3em]
\textbf{Step 1.} By a translation of coordinates, assume that $x_0=0$. Let $u\in\Phi^k(B_{3R})\cap C^\infty(B_{3R})$. In this step, we prove the following estimate for $0<r<R$,
\begin{align}
\fint_{B_r}|\nabla u|\leqs C\fint_{B_R}|\nabla u|+C\sum_{i=0}^{N(r)}\Big(\frac{S_k[u](\overline{B}_{R2^{-i}})}{(R2^{-i})^{n-k}}\Big)^{1/k},
\label{formu311}
\end{align}
where $N(r)\in\mathbb{N}$ is the minimal integer such that $R2^{-N(r)}<r$, and $C=C(n,k)$. Without loss of generality, assume that $N(r)$ is even, $N(r)=2p$ for some $p\geqs3$.

Denote $r_l=R2^{-2l}$ for $l=0,\ldots,p-1$. Then, $r_{l-1}=4r_l$. Define $E_l=B_{r_l}-B_{r_l/2}$ and let $E=\cup_{l=0}^{p-1}E_l$. In the following, we introduce a specific modification $U$ of $u$. Let
\begin{align*}
U=u\quad\text{in }B_{2R}-E,
\end{align*}
and in the shell $E_l$, let $U$ be the $k$-convex solution to the Dirichlet problem
\begin{equation}
\left\{
\begin{array}{cc}
S_k[U]=0  & \text{in }E_l, \\
U=u  & \text{on }\partial E_l.
\end{array}
\right.
\label{formu312}
\end{equation}
Since $u\in\Phi^k(E_l)\cap C^\infty(\overline{E_l})$ is by itself a subsolution, the viscosity solution of \eqref{formu312} exists and belongs to $\Phi^k(E_l)\cap C^0(\overline{E_l})$, which follows from the standard Perron process (see \cite{Trudinger90Perron} and \cite{TW97measureII}). By the comparison principle, $U\geqs u$ in $B_{2R}$. Moreover, $U\in\Phi^k(B_{2R})\cap C^0(\overline{B}_{2R})$ by the arguments from viscosity theory (or by the definition of $k$-subharmonic functions).

Set $\Omega_l=B_{3r_l/4}$ and $P_l=\partial\Omega_l$ for $l=0,\ldots,p-1$. Obviously,
\begin{align}
P_l\subset E_l\quad\text{and}\quad\operatorname{dist}(P_l,\partial E_l)=\frac{r_l}{4}.
\label{formu313}
\end{align}
Let $h_l\in\Phi^k(\Omega_l)\cap C^0(\overline{\Omega_l})$ be the solution to the homogeneous Dirichlet problem
\begin{equation}
\left\{
\begin{array}{cc}
S_k[h_l]=0  & \text{in }\Omega_l, \\
h_l=U  & \text{on }\partial\Omega_l.
\end{array}
\right.
\label{formu314}
\end{equation}
\\
\textbf{(1). Estimate on $h_0$}

Let $H_0=h_0$ in $B_{3R/4}$ and $H_0=U$ in $B_{2R}-B_{3R/4}$. Then, $H_0\in\Phi^k(B_{2R})\cap C^0(\overline{B}_{2R})$. By the comparison principle, $H_0\geqs U\geqs u$. Let $\varphi_\varepsilon$ be a standard mollifier supported in $B_\varepsilon$. Denote $H_\varepsilon=H_0*\varphi_\varepsilon$ and $u_\varepsilon=u*\varphi_\varepsilon$. Since $H_0=u$ in $B_{2R}-B_R$, it follows that $H_\varepsilon=u_\varepsilon$ on $\partial B_{R+\varepsilon}$ and $H_\varepsilon\geqs u_\varepsilon$ in $B_{R+\varepsilon}$. By Lemma \ref{lem21},
\begin{align*}
\int_{B_{R+\varepsilon}}|\nabla H_\varepsilon|\leqs \int_{B_{R+\varepsilon}}|\nabla u_\varepsilon|.
\end{align*}
Since $\Phi^k(B_{2R})\subset W^{1,q}_{loc}(B_{2R})$ for $q<\frac{nk}{n-k}$, then $DH_\varepsilon$ converges weakly in $L^q(B_R)$ to $DH_0$. Using the weak lower semi-continuity, we have (note that $u\in C^\infty$)
\begin{align*}
\int_{B_R}|\nabla H_0|\leqs\liminf_{\varepsilon\to0} \int_{B_R}|\nabla H_\varepsilon|\leqs\lim_{\varepsilon\to0} \int_{B_{R+\varepsilon}}|\nabla u_\varepsilon| =\int_{B_R}|\nabla u|.
\end{align*}
In particular,
\begin{align*}
\int_{B_{3R/4}}|\nabla h_0|\leqs\int_{B_R}|\nabla u|.
\end{align*}
Since $S_k[h_0]=0$, we obtain the following Lipschitz estimate by \eqref{formu37},
\begin{align}
\operatorname*{Lip}_{B_{R/4}}h_0\leqs\frac{C}{R^n}\int_{B_{3R/4}}|\nabla h_0|\leqs\frac{C}{R^n}\int_{B_R}|\nabla u|.
\label{formu315}
\end{align}
\\
\textbf{(2). Estimate on $h_l$}

By the comparison principle, $h_l\geqs U$ in $B_{3r_l/4}$. Restricted in $B_{3r_l/4}$, $S_k[U]$ is supported on $\overline{B}_{r_l/2}\Subset B_{3r_l/4}$. By approximation, there exists a solution $w_l$ to the Dirichlet problem
\begin{equation}
\left\{
\begin{array}{cc}
S_k[w_l]=S_k[U]  & \text{in }B_{3r_l/4}, \\
w_l=0  & \text{on }\partial B_{3r_l/4}.
\end{array}
\right.
\label{formu3new1}
\end{equation}
By the concavity of $\sigma_k^{1/k}$, $S_k[w_l+h_l]\geqs S_k[w_l]+S_k[h_l]=S_k[U]$. Applying the comparison principle again, we obtain $w_l+h_l\leqs U$ in $B_{3r_l/4}$.

Recall that $S_k[U]=0$ in $E_{l+1}=B_{r_{l+1}}-B_{r_{l+1}/2}$. By \eqref{formu35} and \eqref{formu313},
\begin{align*}
|w_l(x)|\leqs\frac{C}{r_{l+1}^n}\int_{B_{r_{l+1}}}|w_l| \leqs\frac{C'}{r_l^n}\Big|\int_{B_{3r_l/4}}w_l\Big|
\end{align*}
holds for all $x\in P_{l+1}$. Together with \eqref{formu310}, it yields
\begin{align}
\Vert w_l\Vert_{L^\infty(P_{l+1})}\leqs C\Big(\frac{S_k[U](B_{3r_l/4})}{r_l^{n-2k}}\Big)^{1/k}.
\label{formu316}
\end{align}
It was proved in \cite{TW97measureI} and \cite{TW97measureIII} that for bounded functions $g,h\in\Phi^k(\Omega)$,
\begin{align}
g=h\text{ on }\partial\Omega,\quad g\geqs h\text{ in }\Omega\quad\Longrightarrow\quad S_k[g](\Omega)\leqs S_k[h](\Omega).
\label{formu317}
\end{align}
Since $U\geqs u$ in $B_{r_l}$ and $U=u$ on $\partial B_{r_l}$, we get
\begin{align}
S_k[U](B_{3r_l/4})\leqs S_k[U](B_{r_l})\leqs S_k[u](\overline{B}_{r_l}).
\label{formu318}
\end{align}

By $h_{l+1}=U$ on $P_{l+1}$, we have $w_l\leqs h_{l+1}-h_l\leqs0$ on $P_{l+1}$. Together with \eqref{formu316} and \eqref{formu318}, it yields
\begin{align*}
\Vert h_{l+1}-h_l\Vert_{L^\infty(P_{l+1})}\leqs C\Big( \frac{S_k[u](\overline{B}_{r_l})}{r_l^{n-2k}}\Big)^{1/k}.
\end{align*}
Since $h_l,h_{l+1}$ solves \eqref{formu314}, by the comparison principle, we have
\begin{align*}
\Vert h_{l+1}-h_l\Vert_{L^\infty(B_{3r_{l+1}/4})}\leqs C\Big(\frac{S_k[u](\overline{B}_{r_l})}{r_l^{n-2k}}\Big)^{1/k}.
\end{align*}
Then, applying Lemma \ref{lem32} with $r=r_{l+1}/2$, we obtain
\begin{align*}
\operatorname*{Lip}_{B_{r_{l+1}/4}}h_{l+1}\leqs \operatorname*{Lip}_{B_{r_l/4}}h_l+C\Big(\frac{S_k[u](\overline{B}_{r_l})}{r_l^{n-k}}\Big)^{1/k}.
\end{align*}
The coefficient being exactly one is crucial to our proof, which enables us to carry out the iteration. That is,
\begin{align}
\operatorname*{Lip}_{B_{r_l/4}}h_{l}\leqs\operatorname*{Lip}_{B_{r_0/4}}h_{0}+C\sum_{j=0}^{l-1}\Big(\frac{S_k[u](\overline{B}_{r_j})}{r_j^{n-k}}\Big)^{1/k}.
\label{formu319}
\end{align}
\\
\textbf{(3). Estimate on $U$}

Considering $l=p-2$ for \eqref{formu3new1}, we obtain a solution $w_{p-2}$ to the Dirichlet problem
\begin{equation*}
\left\{
\begin{array}{cc}
S_k[w_{p-2}]=S_k[U]  & \text{in }B_{3r_{p-2}/4}, \\
w_{p-2}=0  & \text{on }\partial B_{3r_{p-2}/4}.
\end{array}
\right.
\end{equation*}
By the comparison principle, we have $w_{p-2}+h_{p-2}\leqs U$ in $B_{3r_{p-2}/4}$.

Denote $\widetilde E_{p-1}=B_{7r_{p-1}/8}-B_{5r_{p-1}/8}$. Then 
\begin{align*}
\widetilde E_{p-1}\subset E_{p-1}\quad\text{and}\quad \operatorname{dist}(\widetilde E_{p-1},\partial E_{p-1})=\frac{r_{p-1}}{8}.
\end{align*}
Since $S_k[U]=0$ in $E_{p-1}$, we utilize \eqref{formu35} to obtain
\begin{align*}
|w_{p-2}(x)|\leqs\frac{C}{r_{p-1}^n}\int_{B_{r_{p-1}}}|w_{p-2}|\leqs\frac{C'}{r_{p-2}^n}\Big|\int_{B_{3r_{p-2}/4}}w_{p-2}\Big|
\end{align*}
holds for any $x\in\widetilde{E}_{p-1}$. Together with \eqref{formu310}, it yields that
\begin{align*}
\Vert w_{p-2}\Vert_{L^\infty(\widetilde{E}_{p-1})}\leqs C\Big(\frac{S_k[U](B_{3r_{p-2}/4})}{r_{p-2}^{n-2k}}\Big)^{1/k}.
\end{align*}
Applying \eqref{formu318} with $l=p-2$ and by $w_{p-2}+h_{p-2}\leqs U$, we obtain
\begin{align}
\Vert U-h_{p-2}\Vert_{L^\infty(\widetilde{E}_{p-1})}\leqs C\Big(\frac{S_k[u](\overline{B}_{r_{p-2}})}{r_{p-2}^{n-2k}}\Big)^{1/k}.
\label{formu321}
\end{align}

Next, we prove the gradient estimate for $U$ on $P_{p-1}$. Indeed,
\begin{align*}
P_{p-1}\subset\widetilde E_{p-1}\quad\text{and}\quad \operatorname{dist}(P_{p-1},\partial\widetilde E_{p-1})=\frac{r_{p-1}}{8}.
\end{align*}
Hence, by applying Lemma \ref{lem32} with $r=r_{p-1}/8$ and using \eqref{formu321}, we have for $x\in P_{p-1}$,
\begin{align*}
\operatorname*{Lip}_{B(x,r_{p-1}/16)}U\leqs\operatorname*{Lip}_{B_{r_{p-1}}}h_{p-2}+C\Big(\frac{S_k[u](\overline{B}_{r_{p-2}})}{r_{p-2}^{n-k}}\Big)^{1/k}.
\end{align*}
In particular, taking the supremum over $P_{p-1}$ yields
\begin{align}
\max_{P_{p-1}}|\nabla U|\leqs\operatorname*{Lip}_{B_{r_{p-2}/4}}h_{p-2}+C\Big(\frac{S_k[u](\overline{B}_{r_{p-2}})}{r_{p-2}^{n-k}}\Big)^{1/k}.
\label{formu322}
\end{align}
\\
\textbf{(4). Estimate on $u$}

By our choice of $p$, $R2^{-2p}<r\leqs R2^{-2p+1}=r_{p-1}/2$. Therefore, $u=U$ in $B_r\subset B_{r_{p-1}/2}$. By Corollary \ref{cor21}, $U$ satisfies
\begin{align*}
\frac{1}{r_{p-1}^n}\int_{B_{3r_{p-1}/4}}|\nabla U|\leqs C\max_{P_{p-1}}|\nabla U| +C\Big(\frac{S_k[U](B_{3r_{p-1}/4})}{r_{p-1}^{n-k}}\Big)^{1/k}.
\end{align*}
Using $r>r_{p-1}/4$ and \eqref{formu318} applied with $l=p-1$, we obtain
\begin{align}
\frac{1}{r^n}\int_{B_r}|\nabla u|\leqs C\max_{P_{p-1}}|\nabla U| +C\Big(\frac{S_k[u](\overline{B}_{r_{p-1}})}{r_{p-1}^{n-k}}\Big)^{1/k}.
\label{formu323}
\end{align}

Combining \eqref{formu315}, \eqref{formu319}, \eqref{formu322} and \eqref{formu323}, we have
\begin{align*}
\frac{1}{r^n}\int_{B_r}|\nabla u|\leqs\frac{C}{R^n} \int_{B_R}|\nabla u|+C\sum_{j=0}^{p-1}\Big(\frac{S_k[u](\overline{B}_{r_j})}{r_j^{n-k}}\Big)^{1/k}.
\end{align*}
Recalling that $r_j=R2^{-2j}$, we actually prove \eqref{formu311}.
\\[0.8em]
\textbf{Step 2.} Let $u\in\Phi^k(\Omega)$ satisfy $\mu_k(u)=\mu$ in $\Omega$. Let $\varphi$ be a standard mollifier with compact support, and set $\varphi_\varepsilon(\cdot)=\varepsilon^{-n}\varphi(\cdot/\varepsilon)$. Denote the mollification of $u$ by $u_j=u*\varphi_{\varepsilon_j}$, $\varepsilon_j\downarrow0$. By the convexity of $\Gamma_k$, we have $u_j\in\Phi^k(\Omega')\cap C^\infty(\Omega')$ for $\Omega'\Subset\Omega$ and $j$ sufficiently large. The classical subharmonicity of $u$ shows that
\begin{align*}
u_j\downarrow u\quad\text{locally in }\Omega,
\end{align*}
and by the weak continuity of $k$-Hessian measure (see \cite{TW97measureII}),
\begin{align*}
\mu_j=S_k[u_j]\rightharpoonup\mu_k(u)=\mu.
\end{align*}
Consequently, the following holds for any closed set $K\Subset\Omega$,
\begin{align}
\limsup_{j\to\infty}\mu_j(K)\leqs\mu(K).
\label{formu324}
\end{align}

Assume that $B(x_0,3R)\subset\Omega$ and $u_j$ is well defined on $B(x_0,3R)$. By \eqref{formu311}, we have
\begin{align}
\fint_{B(x_0,r)}|\nabla u_j|\leqs C\fint_{B(x_0,R)}|\nabla u_j|+C\sum_{i=0}^{N(r)}\Big(\frac{\mu_j(\overline{B}(x_0,R2^{-i})}{(R2^{-i})^{n-k}}\Big)^{1/k},
\label{formu325}
\end{align}
where $0<r<R$ and $N(r)$ is defined as in \eqref{formu311}. Since $\Phi^k(\Omega)\subset W^{1,q}_{loc}(\Omega)$ for $q<\frac{nk}{n-k}$, we have $u\in W^{1,q}(B(x_0,R))$ and
$Du_j$ converges to $Du$ in $L^q(B(x_0,R))$. Passing the limit in \eqref{formu325} and making use of \eqref{formu38} and \eqref{formu324}, we obtain
\begin{align*}
\fint_{B(x_0,r)}|\nabla u|&\leqs C\fint_{B(x_0,R)}|\nabla u|+C\sum_{i=0}^{N(r)}\Big(\frac{\mu(\overline{B}(x_0,R2^{-i})}{(R2^{-i})^{n-k}}\Big)^{1/k}\\
&\leqs C\fint_{B(x_0,R)}|\nabla u| +C\mathbf{W}_k^\mu(x_0,2R).
\end{align*}
At the Lebesgue point $x_0$, it follows that
\begin{align}
|\nabla u(x_0)|=\lim_{r\to0}\fint_{B(x_0,r)}|\nabla u|\leqs C\fint_{B(x_0,R)}|\nabla u| +C\mathbf{W}_k^\mu(x_0,2R).
\label{formu326}
\end{align}
Applying the Lebesgue differentiation theorem to $Du\in L_{loc}^q(\Omega)$ yields that \eqref{formu326} holds almost everywhere in $\Omega$. If $\mathbf{W}_k^\mu(x,r)$ is finite for $x\in\Omega$ and some $r>0$, then the right-hand side of \eqref{formu326} is locally uniformly bounded. Since $u\in W^{1,q}_{loc}(\Omega)$, we conclude that $u$ is locally Lipschitz in $\Omega$ by a standard mollification argument. This finishes the proof.

\hfill $\square$
\\[0.3em]
\textbf{Proof of Corollary \ref{cor11}.}

It suffices to prove that $\mathbf{W}_k^\mu(x,R)$ is finite for $x\in\Omega$ and for some $R>0$. For $0<r<R$, if
$\mu(B(x,r))\leqs Mr^{n-k+\varepsilon}$ holds for some constants $M>0$ and $\varepsilon>0$, then
\begin{align*}
\mathbf{W}_k^\mu(x,R)=\int_0^R\Big(\frac{\mu(B(x,r))}{r^{n-k}}\Big)^{1/k}\frac{dr}{r}\leqs \int_0^RM^{1/k}r^{\varepsilon/k-1}dr=\frac{k}{\varepsilon}M^{1/k}R^{\varepsilon/k}.
\end{align*}
In particular, if $\mu=fdx$ is a $L^p$ density in $\Omega$ with $p>\frac{n}{k}$, then by H\"older's inequality,
\begin{align*}
\mu(B(x,r))=\int_{B(x,r)}f\leqs\Big(\int_{B(x,r)}f^p\Big)^{1/p}\Big(\int_{B(x,r)}1\Big)^{1-1/p}\leqs Mr^{n-n/p}.
\end{align*}
Since $p>\frac{n}{k}$, then $\mu(B(x,r))\leqs Mr^{n-k+\varepsilon}$ with $\varepsilon=k-\frac{n}{p}>0$. This completes the proof.

\hfill $\square$
\\[-0.7em]

Theorem \ref{thm11} shows that a $k$-subharmonic function is Lipschitz continuous if the Wolff potential of its $k$-Hessian measure is finite. However, the converse does not hold. Indeed, if $u$ is a Lipschitz $k$-subharmonic function in $B_R$, then $\mu=\mu_k(u)$ satisfies
\begin{align}
\mu(B_r)\leqs Mr^{n-k}\quad\text{for}\quad0<r<R/8,
\label{formu327}
\end{align}
see \cite[Theorem 3.5]{Labutin02potential} for the proof. We remark that although this theorem is stated under the assumption $1\leqs k\leqs n/2$, it remains valid for all $1\leqs k\leqs n$ and the proof is completely the same.

Estimate \eqref{formu327} is sharp, as there exists a convex function $u(x)=|x|$, which is Lipschitz in $\mathbb{R}^n$ and solves
\begin{align*}
\mu_k(u)(B_r)=\binom{n}{k}\omega_nr^{n-k}.
\end{align*}
A direct computation yields that $\mathbf{W}_k^\mu(0,R)$ is infinite for any $R>0$. This indicates that the Wolff potential for Lipschitz $k$-subharmonic functions is not necessarily finite.

\section{Interior $W^{2,1+\varepsilon}$ Regularity }
In this section, we prove the interior $W^{2,1+\varepsilon}$ regularity results for the quadratic Hessian equation. By a rescaling argument, we can assume that $u\in\Phi^2(B_4)$ satisfies $\mu_2(u)=fdx$ with $0<\lambda\leqs f\leqs\Lambda<\infty$ in $B_4$. Then, it suffices to prove that $u\in W^{2,1+\varepsilon}(B_1)$ for some constant $\varepsilon=\varepsilon(n,\lambda,\Lambda)\in(0,1)$. By Theorem \ref{thm11}, $u$ is locally Lipschitz in $B_4$.

Let $\{f_j\}\subset C^\infty(B_3)$ be a sequence of the standard mollification of $f$, so that $\lambda\leqs f_j\leqs\Lambda$ and $f_j$ converges to $f$ in $L^p(B_3)$, $1\leqs p<\infty$.
Consider the approximation solution $u_j$, which solves the Dirichlet problem
\begin{equation*}
\left\{
\begin{array}{cc}
S_2[u_j]=f_j & \text{in }B_3, \\
u_j=\varphi_j  & \text{on }\partial B_3,
\end{array}
\right.
\end{equation*}
where $\varphi_j\in C^\infty(\partial B_3)$ converges to $u$ in $C(\partial B_3)$. By the comparison estimate (see \cite{Trudinger97weak}),
\begin{align*}
\Vert u_m-u_l\Vert_{L^\infty(B_3)}\leqs\Vert\varphi_m-\varphi_l\Vert_{L^\infty(\partial B_3)}+C\Vert f_m^{1/2}-f_l^{1/2}\Vert_{L^{p}(B_3)},
\end{align*}
where $p>\max(n/2,2)$. By our choice of $f_j$ and $\varphi_j$, we have $u_j\to u$ uniformly in $\overline{B_3}$. In particular, $u_j$ has a uniform $L^\infty$ bound in $B_3$. Hence, using the interior gradient estimate in Theorem \ref{thm11}, $|Du_j|\leqs K$ holds in $\overline{B_2}$ with a uniform constant $K>0$. Let $W_j$ be the $2$-subharmonic solution of
\begin{equation*}
\left\{
\begin{array}{cc}
S_2[W_j]=\lambda/4 & \text{in }B_2, \\
W_j=u_j  & \text{on }\partial B_2.
\end{array}
\right.
\end{equation*}
By the comparison principle, $u_j\leqs W_j\leqs H_j$, where $H_j$ is the harmonic function in $B_2$ with boundary trace $u_j$. Using the Poisson formula and global H\"older estimates, we have
\begin{align}
0\leqs W_j-u_j\leqs H_j-u_j\leqs C\operatorname{dist}(x,\partial B_2)^\beta,
\label{formu41}
\end{align}
where $0<\beta<1$ and the constant $C$ depends on $\beta$ and $K$.

By the gradient estimate, we can assume that $|DW_j|\leqs K$ in $B_{3/2}$. To obtain a uniform lower bound for $W_j-u_j$, we apply the separation-propagation argument by \cite[\S3.1$\sim$\S3.3]{LW26sigma2}; note that the seed separation and the constructed barrier function only depend on $n$, $\lambda$ and $K$. Thus, there exists a uniform constant $\delta=\delta(n,\lambda,K)>0$ such that
\begin{align}
W_j-u_j\geqs2\delta\quad\text{in }B_1.
\label{formu42}
\end{align}
By \eqref{formu41}, we have $W_j-u_j\leqs\delta/2$ in $B_2-B_{2-l}$, where $l\leqs(\delta/C)^{1/\beta}$ is fixed.

\subsection{Comparison Solution}
We now present a specific comparison solution constructed by \cite{CZZ26sigma2} and \cite{Lyu26W2p}. Denote
\begin{align*}
G(H)=\sqrt{\sigma_2(H)}\ \ (H\in\Gamma_2),\quad m=\sqrt{\lambda},\quad M=\sqrt{\Lambda}.
\end{align*}
Fix a smooth nondecreasing function $\chi$ such that $\chi=0$ on $(-\infty,0]$ and $\chi=1$ on $[1,\infty)$. For a constant $L\geqs1$ to be determined below, define
\begin{align*}
\eta(s)=\chi(L^{-1}\log{s})\ \ (s>0),\quad\eta(0)=0, \quad\tau=e^L.
\end{align*}
Then, $\eta$ is a nondecreasing function on $[0,\infty)$ with $\eta=0$ on $[0,1]$, $\eta=1$ on $[\tau,\infty)$, and
\begin{align}
\sup_{s>0}(|s\eta'(s)|+|s^2\eta''(s)|)\leqs C_{\chi}/L.
\label{formu43}
\end{align}

Let $b,g,W\in C^\infty(\overline{B_2})$ satisfy $m\leqs b\leqs g\leqs M$. For $P>1$ and $t\in[0,1]$, define
\begin{gather*}
h_t(x,z,S)=g(x)-t\big(g(x)-b(x)\big)\eta\big((W-z-\delta)^4_+S/P\big),\\
F_t(x,z,H)=\frac{\sigma_2(H)-h_t(x,z,S)^2}{S},\qquad S=\operatorname{tr}H>0.
\end{gather*}
For brevity, denote $h=h_t$ and $s=(W-z-\delta)^4_+S/P$. Then,
\begin{align*}
m\leqs h\leqs M,\quad h_z\geqs0\quad\text{and}\quad h_S\leqs0.
\end{align*}
Moreover, $h^2-2Shh_S\geqs m^2$ and
\begin{align*}
(h-Sh_S)^2+S^2hh_{SS}&=(h+t(g-b)s\eta'(s))^2-ht(g-b)s^2\eta''(s)\\
&\geqs m^2-M(M-m)C_\chi/L>0,
\end{align*}
where we take $L=L(m,M,C_\chi)$ sufficiently large; thus $\tau=e^L$ only depends on $m,M$.

For completeness, denote $\phi(S)=h(S)^2/S$. By direct computation,
\begin{align*}
D_HF_t[\xi\otimes\xi]=\frac12\Big|\frac{H}{S}-\xi\otimes\xi\Big|^2+\frac{h^2-2Shh_S}{S^2}>0\qquad(|\xi|=1),
\end{align*}
and
\begin{align*}
D_H^2F_t[X,X]&=-\frac{1}{S}\Big|X-\frac{\operatorname{tr}X}{S}H\Big|^2-\phi''(S)(\operatorname{tr}X)^2<0,\\
\phi''(S)&=\frac{2}{S^3}((h-Sh_S)^2+S^2hh_{SS})>0.
\end{align*}
Hence, the operator $F_t$ is elliptic and concave with respect to $H$ on the entire $\{\operatorname{tr}H>0\}$. In particular, the zero set of $F_t$ with positive trace is contained in $\Gamma_2$. Furthermore, $F_t$ satisfies $(F_t)_z=-2hh_z/S\leqs0$.

With the above preparations in place, one can verify the following existence theorem by the method of continuity; consider the continuous path $F_t(x,v_t,D^2v_t)=0$ with the Dirichlet boundary condition $v_t=u$. Note that the cutoff function $\eta$ is locally constant across $W-z-\delta=0$ for every finite positive $S$, and thus $F_t$ is smooth. We omit the proof here and refer to \cite[Proposition 3.1]{Lyu26W2p} and \cite[Theorem 3.1]{CZZ26sigma2} for details.
\begin{thm}\label{thm41}
Let $u,W,g,b\in C^\infty(\overline{B_2})$ satisfy
\begin{align*}
u,W\in&\Phi^2(B_2),\quad G(D^2u)=g,\quad G(D^2W)=m/2,\\
&m\leqs b\leqs g\leqs M,\quad u=W\quad\text{on }\partial B_2.
\end{align*}
Then for every $P>1$, there exists a unique smooth solution $v\in\Phi^2(B_2)$ of
\begin{equation}
\left\{
\begin{array}{lc}
G(D^2v)+(g-b)\eta\big((W-v-\delta)^4_+\Delta v/P\big)=g  & \text{in }B_2, \\
v=u  & \text{on }\partial B_2.
\end{array}
\right.
\label{formu44}
\end{equation}
Moreover, it satisfies
\begin{align*}
u\leqs v\leqs W,\quad m\leqs G(D^2v)\leqs M.
\end{align*}
\end{thm}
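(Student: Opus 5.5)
The plan is to use the method of continuity along the path $F_t(x,v_t,D^2v_t)=0$, $t\in[0,1]$, with $v_t=u$ on $\partial B_2$, where $F_t$ and $h_t$ are as defined above. At $t=0$ we have $h_0=g$, so $v_0=u$ solves the problem, since $\sigma_2(D^2u)=g^2$. At $t=1$ the equation $\sigma_2(D^2v)=h_1^2$ is exactly \eqref{formu44} on $\{\operatorname{tr}D^2v>0\}$. Let $I\subset[0,1]$ be the set of $t$ admitting a smooth solution $v_t\in\Phi^2(B_2)$ with $u\leqs v_t\leqs W$. We will show that $I$ is open and closed, and deduce uniqueness from the comparison principle.

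\textbf{Openness and uniqueness.} The linearization of $F_t$ at $v_t$ is $L\phi=D_HF_t[D^2\phi]+(F_t)_z\phi$. It is uniformly elliptic on compact sets of $\{\operatorname{tr}H>0\}$ because $D_HF_t[\xi\otimes\xi]>0$, and it has zeroth-order coefficient $(F_t)_z\leqs0$. Hence the implicit function theorem in $C^{2,\alpha}$ applies. Since $F_t$ is smooth (because $\eta$ is locally constant across $W-z-\delta=0$), bootstrapping gives smooth solutions. Uniqueness follows from the standard comparison principle for operators that are elliptic in $H$ and nonincreasing in $z$.

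\textbf{A priori bounds.} Since $m\leqs h\leqs M$, the solution satisfies $m^2\leqs\sigma_2(D^2v_t)\leqs M^2$, so $m\leqs G(D^2v_t)\leqs M$. Because $h\leqs g$, $v_t$ is a supersolution of $G=g$, so $v_t\geqs u$. Because $G(D^2W)=m/2<h$, $W$ is a supersolution, so $v_t\leqs W$. Comparison with linear functions and with these barriers gives $C^0$ and boundary $C^1$ bounds. The main work is the $C^2$ estimate. Note that $h$ depends on $S=\Delta v$, so the equation $\sigma_2(D^2v)=h(x,v,\Delta v)^2$ is not of the standard form. This is why we rewrite it as $F_t=0$: the conditions $(h-Sh_S)^2+S^2hh_{SS}>0$ and $h^2-2Shh_S\geqs m^2$ make $F_t$ concave and elliptic on all of $\{\operatorname{tr}H>0\}$. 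We would then run the classical global second-derivative estimates for concave operators (Caffarelli--Nirenberg--Spruck / Trudinger type). Boundary double-normal bounds come from the uniform convexity of $\partial B_2$ and the barriers $u\leqs v\leqs W$. Interior and mixed bounds come from differentiating the concave equation twice. We must also control the $x,z$-derivatives of $h$. Here the factor $(W-v-\delta)_+^4$ and the logarithmic profile of $\eta$, through \eqref{formu43}, keep those derivatives bounded in terms of $S$. In particular $s\eta'(s)$ bounded means $h_S\cdot S$ is bounded, so the terms behave like lower order. Once $C^2$ bounds hold and $\operatorname{tr}D^2v\geqs c>0$ (true since $\sigma_2\geqs m^2$ in $\Gamma_2$), Evans--Krylov gives $C^{2,\alpha}$ bounds, and Schauder theory gives higher bounds, so $I$ is closed.

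The main obstacle is the global $C^2$ estimate, in particular the boundary mixed and double-normal derivatives, given the nonstandard $S$-dependence of $h$. I would first try to follow \cite[Proposition 3.1]{Lyu26W2p} and \cite[Theorem 3.1]{CZZ26sigma2}, treating $F_t$ as a general concave operator satisfying the CNS structure conditions. I would check in particular that $F_t\to\infty$-type growth and the unboundedness of the $\Gamma_2$ level sets persist when $h$ varies with $S$ within $[m,M]$.
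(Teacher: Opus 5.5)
Your proposal is correct and follows the route the paper indicates: the method of continuity along $F_t(x,v_t,D^2v_t)=0$ with $v_t=u$ on $\partial B_2$, using that $F_t$ is smooth (because $\eta$ vanishes near $W-z-\delta=0$), elliptic and concave on $\{\operatorname{tr}H>0\}$ with $(F_t)_z\leqs0$, and deferring the global second-derivative estimates to \cite{Lyu26W2p} and \cite{CZZ26sigma2} exactly as the paper does; your added details (comparison giving $u\leqs v_t\leqs W$ and $m\leqs G(D^2v_t)\leqs M$, the implicit function theorem for openness, Evans--Krylov for closedness, uniqueness via the linearized maximum principle) are sound. You only state the gradient bound at the boundary, but the $C^2$ step needs a global $C^1$ bound; this follows, for instance, from the barriers $u\leqs v_t\leqs W$ together with the interior gradient estimate of Theorem \ref{thm11}, since $\sigma_2(D^2v_t)\leqs M^2$.
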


Let $w=(|x|^2-4)/2$, then $w$ solves $G(D^2w)=\sqrt{n(n-1)/2}=:c_n$ and $w=0$ on $\partial B_2$. Set $\tilde{w}=(M-m)w/c_n$, then $G(D^2\tilde{w})=M-m$ in $B_2$. By the concavity of $G$, we have $G(D^2\tilde{w}+D^2v)\geqs G(D^2\tilde{w})+G(D^2v)\geqs G(D^2u)$. Since $\tilde{w}+v=u$ on $\partial B_2$, applying the comparison principle yields $\tilde{w}+v\leqs u$ in $B_2$. This implies
\begin{align}
v-u\leqs-\tilde{w}\leqs C_n(M-m)\quad\text{in }B_2.
\label{formu45}
\end{align}

Denote $g_j=\sqrt{f_j}$. Then $m\leqs g_j\leqs M$, and $u_j,W_j\in\Phi^2(B_2)$ satisfies
\begin{align*}
G(D^2u_j)=g_j,\quad G(D^2W_j)=m/2,\quad u_j=W_j\quad\text{on }\partial B_2.
\end{align*}
Therefore, by Theorem \ref{thm41}, the Dirichlet problem \eqref{formu44} admits a smooth $2$-subharmonic solution $v_j$ for $u=u_j$, $g=g_j$, $W=W_j$ and $b=m$. It satisfies
\begin{align*}
u_j\leqs v_j\leqs W_j\quad\text{and}\quad m\leqs G(D^2v_j)\leqs M.
\end{align*}
Since $W_j-u_j\leqs\delta/2$ in $B_2-B_{2-l}$, we have $\rho_j:=W_j-v_j-\delta<0$ in $B_2-B_{2-l}$. This implies that $\rho_{j,+}$ is compactly supported in $B_{2-l}$. Applying the interior gradient estimates in Theorem \ref{thm11}, we obtain a uniform constant $K_0$ such that
\begin{align*}
\sup_{B_{2-l}}\big(|Dv_j|+|DW_j|\big)\leqs K_0.
\end{align*}
Note that the constants $l,\delta,K_0$ are uniform for $u_j$ but may depend on $u$.

Set $\Omega_j=\{\rho_{j,+}>0\}$. Then $\Omega_j\Subset B_{2-l}$. Without loss of generality, assume that $\Omega_j$ is nonempty. On $\Omega_j$, consider the auxiliary function
\begin{align*}
\Psi_j(x,\xi)=\rho_{j}^4(x)\varphi(\frac{1}{2}|Dv_j|^2)\partial_{\xi\xi}v_j,\quad \xi\in\mathbb{S}^{n-1},
\end{align*}
where for $0\leqs t\leqs K_0^2/2$,
\begin{align}
\varphi(t)=\Big(1-\frac{t}{K_0^2}\Big)^{-a},\quad 1\leqs\varphi\leqs 2^a,\quad a=1/8.
\label{formu46}
\end{align}
Suppose that the maximum value of $\Psi_j$ is attained at $x_j\in\Omega_j$ and $\xi=e_1$. We consider two cases separately. If $\Psi_j(x_j,e_1)>2^a\tau P$, then
\begin{align*}
\rho_j^4\Delta v_j\geqs\rho_j^4\partial_{11}v_j\geqs2^{-a}\Psi_j(x_j,e_1)>\tau P\quad\text{at }x_j.
\end{align*}
Hence, $\rho_j^4\Delta v_j/P>\tau$ holds in a neighborhood $\mathcal{O}$ of $x_j$. Recall that $\eta=1$ on $[\tau,\infty)$. Then, $v_j$ satisfies the equation
\begin{align*}
G(D^2v_j)=m\quad \text{in }\mathcal{O}.
\end{align*}
Applying the Pogorelov's estimate by Chou-Wang \cite[(4.4)$\sim$(4.14)]{CW01variational}, we obtain that the maximum value of $\Psi_j$ is bounded by a constant $C$, which depends only on $n,m$ and $K_0$. Thus by \eqref{formu46}, we have $\rho_j^4\Delta v_j\leqs C_0$ in $\Omega_j$ for some constant $C_0=C_0(n,m,K_0)$.

On the other hand, if $\Psi_j(x_j,e_1)\leqs 2^a\tau P$, then by \eqref{formu46}, $\rho_j^4\Delta v_j\leqs \tau_* P$ in $\Omega_j$, where the constant $\tau_*=2^an\tau$ depends only on $n,m,M$. Combining both cases, we have
\begin{align}
\rho_{j,+}^4\Delta v_j\leqs\max(C_0,\tau_*P).
\label{formu47}
\end{align}

\subsection{Estimate on Trace Tails}
The proof strategy in this subsection is inspired by \cite{Lyu26W2p}. For $A,B\in\Gamma_2$, define
\begin{align*}
[A,B]=\frac{1}{2}\big(\operatorname{tr}A \operatorname{tr}B-\langle A,B\rangle\big),
\end{align*}
where $\langle\cdot,\cdot\rangle$ denotes the Frobenius inner product on Sym(n). Denote
\begin{align*}
d_2(A,B)^2=2\big([A,B]-G(A)G(B)\big).
\end{align*}
By \cite[Lemma 3.6]{CZZ26sigma2}, $d_2(A,B)^2\geqs0$. The following integral identity given in \cite[Proposition 3.8]{CZZ26sigma2} is also useful. For smooth $u,v\in\Phi^2(\Omega)$ with $u=v$ on $\partial\Omega$,
\begin{align}
\int_\Omega d_2(D^2u,D^2v)^2+\frac{1}{2}\int_{\partial\Omega}H_{\partial\Omega}[(u-v)_\nu]^2=\int_\Omega\big(G(D^2u)-G(D^2v)\big)^2,
\label{formu48}
\end{align}
where $\nu$ denotes the outward normal of $\partial\Omega$, and $H_{\partial\Omega}$ denotes the sum of the principal curvatures of $\partial\Omega$ with respect to its inner normal. In particular, when $\Omega$ is mean convex, the boundary integral is nonnegative. Indeed, integrating the identity 
\begin{align*}
d_2(A,B)^2=\big(G(A)-G(B)\big)^2-\sigma_2(A-B)
\end{align*}
for $A=D^2u$, $B=D^2v$ on $\Omega$, and using Reilly's formula gives \eqref{formu48}.

Another useful identity is given in \cite[(3.12)]{Lyu26W2p},
\begin{align*}
d_2(A,B)^2=\frac{1}{2}\operatorname{tr}A\operatorname{tr}B \bigg|\frac{A}{\operatorname{tr}A}-\frac{B}{\operatorname{tr}B}\bigg|^2+\bigg(G(A)\sqrt{\frac{\operatorname{tr}B}{\operatorname{tr}A}}-G(B)\sqrt{\frac {\operatorname{tr}A}{\operatorname{tr}B}}\bigg)^2.
\end{align*}
In particular, if $m\leqs G(A),G(B)\leqs M$, we have
\begin{align}
d_2(A,B)^2\geqs m^2\bigg(\frac{\operatorname{tr}B}{\operatorname{tr}A}+\frac{\operatorname{tr}A}{\operatorname{tr}B}\bigg)-2M^2.
\label{formu49}
\end{align}

Now, we are ready to prove the interior $W^{2,1+\varepsilon}$ regularity. 
\begin{proposition}\label{pro41}
Under the assumptions and notations as above, there exists a constant $\varepsilon=\varepsilon(n,m,M)>0$ such that $u\in W^{2,1+\varepsilon}(B_1)$ and
\begin{align*}
\int_{B_1}|D^2u|^{1+\varepsilon}\leqs C,
\end{align*}
where the constant $C$ depends on $n,m,M,\varepsilon, \delta,K_0$ and $C_0$.
\end{proposition}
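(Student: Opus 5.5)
The plan is a level-set (trace tail) decay estimate for $\Delta u_j$, uniform in $j$, followed by a limit $j\to\infty$. The tool is the comparison solution $v_j$ of Theorem \ref{thm41}, built with $b=m$ and a parameter $P\sim t$ that we tune to the level $t$. The integral identity \eqref{formu48} and the lower bound \eqref{formu49} turn "the traces of $D^2u_j$ and $D^2v_j$ are very different" into a measure estimate. Set $A_t=\{x\in B_1:\Delta u_j(x)>t\}$. The target is
\begin{align*}
|A_{Kt}|\leqs \theta\,|A_{t/C_1}|+\text{(lower order)},\qquad \theta<(KC_1)^{-1-2\varepsilon},
\end{align*}
for a large constant $K=K(n,m,M)$ and all $t\geqs t_0(\delta,K_0,C_0)$. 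Iterating along $t_i=(KC_1)^i t_0$ gives $|A_t|\leqs Ct^{-1-2\varepsilon}$. Since $|D^2u_j|\leqs C_n\Delta u_j$ for $2$-subharmonic $u_j$, this yields $\int_{B_1}|D^2u_j|^{1+\varepsilon}\leqs C$ uniformly in $j$. Finally $u_j\to u$ uniformly and $D^2u_j$ is bounded in $L^{1+\varepsilon}$, so weak compactness and lower semicontinuity transfer the bound to $u$.

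\textbf{Step 1: upper bound for $\Delta v_j$.} Fix $P=t$. By \eqref{formu47}, on the set $\{\rho_j\geqs\delta\}$ we have $\Delta v_j\leqs\max(C_0,\tau_*t)\delta^{-4}\leqs C_2t$ once $t\geqs C_0/\tau_*$. Hence on $A_{Kt}\cap\{\rho_j\geqs\delta\}$ the trace ratio is $\Delta u_j/\Delta v_j\geqs K/C_2$. Both $G(D^2u_j)$ and $G(D^2v_j)$ lie in $[m,M]$, so \eqref{formu49} gives $d_2(D^2u_j,D^2v_j)^2\geqs m^2K/C_2-2M^2\geqs c K$ there.

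\textbf{Step 2: upper bound for $\int d_2^2$.} Apply \eqref{formu48} on $B_2$, where the boundary term is nonnegative. This gives
\begin{align*}
\int_{B_2}d_2(D^2u_j,D^2v_j)^2\leqs\int_{B_2}\big(g_j-G(D^2v_j)\big)^2\leqs (M-m)^2\,\big|\{\rho_{j,+}^4\Delta v_j\geqs t\}\big|.
\end{align*}
The last inequality holds because the integrand vanishes wherever $\eta=0$, that is wherever $\rho_{j,+}^4\Delta v_j/P\leqs1$. Since $\rho_j\leqs\sup(W_j-u_j)$ is bounded, the set on the right lies inside $\{\Delta v_j\geqs t/C_3\}$, and this set is supported in $\Omega_j\Subset B_{2-l}$.

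\textbf{Step 3: closing the loop.} We need to return from $\{\Delta v_j\geqs t/C_3\}$ to a super-level set of $\Delta u_j$ at a comparable height. Split $\{\Delta v_j\geqs t/C_3\}$ into the part where $\Delta u_j\geqs t/(C_3K')$ and the part where $\Delta v_j/\Delta u_j\geqs K'$. On the second part, \eqref{formu49} again gives $d_2^2\geqs cK'$. So its measure is at most $(C/K')\int d_2^2$, which can be absorbed into the left-hand side of Step 2 when $K'$ is large. Combining Steps 1 and 2 then gives
\begin{align*}
cK\,|A_{Kt}\cap\{\rho_j\geqs\delta\}|\leqs C\,|A_{t/(C_3K')}\cap B_{2-l}|.
\end{align*}
This is the desired decay with $\theta\sim K^{-1}$ against a dilation factor $K C_3K'$. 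The exponent gain $\varepsilon>0$ has to be extracted by choosing $K$ large relative to the fixed constants $C_2,C_3,K'$ (all depending on $n,m,M$). Two further points need care. The estimate naturally lives on $B_{2-l}$ rather than $B_1$, so I would either run the iteration on nested balls or cover $B_{2-l}$ by rescaled copies of the same argument. I also need a crude a priori bound on $|A_t|$ for small $t$, which follows from $\int_{B_{2-l}}\Delta u_j\leqs C(K_0)$ by the divergence theorem.

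\textbf{Main obstacle.} The delicate point is that Step 1 only controls $A_{Kt}\cap\{\rho_j\geqs\delta\}$. The bound $W_j-u_j\geqs2\delta$ on $B_1$ from \eqref{formu42} does not immediately force $W_j-v_j\geqs2\delta$, because $v_j\geqs u_j$ could rise toward $W_j$. My first attempt would be to show $|\{v_j-u_j>\delta\}\cap B_1|$ is small, of order $t^{-1-2\varepsilon}$ or controlled by the same tail quantity. To do this I would test \eqref{formu48} against the $L^1$ smallness of $v_j-u_j$. The two functions differ only through the equation on $\{\rho_{j,+}^4\Delta v_j\geqs t\}$, a set of measure $O(t^{-1}\int\Delta v_j)$. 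Then the $L^\infty$ comparison estimate of \cite{Trudinger97weak} (as used at the start of this section, with $f^{1/2}$ differing only on a small set) gives $\|v_j-u_j\|_{L^\infty}\leqs C|\{\eta>0\}|^{1/p}\to0$ as $t\to\infty$. For $t\geqs t_0$ this gives $v_j\leqs u_j+\delta\leqs W_j-\delta$, hence $\rho_j\geqs0$ on $B_1$. To obtain $\rho_j\geqs\delta$ and not merely $\rho_j\geqs0$, I would define the iteration with $2\delta$ replaced by $3\delta$ in \eqref{formu42}, by halving the $\delta$ used in $\eta$; this removes the issue.
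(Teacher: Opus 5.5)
\textbf{There is a genuine gap in how you extract $\varepsilon$.} Your Steps 1--3 give
$|A_{Kt}\cap\{\rho_j\geqs\delta\}|\leqs \frac{A}{K}\,|A_{t/C_1}|$, with $A=4(M-m)^2C_2/m^2$ and $C_1=C_3K'$. The product of the decay factor $\theta=A/K$ and the dilation factor $KC_1$ is therefore $AC_1$, which does not depend on $K$. Enlarging $K$ never yields $\theta<(KC_1)^{-1-2\varepsilon}$. You would need $AC_1<1$, a smallness condition on $(M-m)/m$ that you do not have, and which your $\delta$-dependent $C_2=\tau_*\delta^{-4}$ makes worse. Without it, iteration gives at best $|A_s|\lesssim s^{-\gamma}$ with $\gamma\leqs 1$. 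This is sharp for an inequality of this form: whenever $AC_1\geqs 1$, a function with $|\{U>s\}|=s^{-1}$ for large $s$ satisfies it and is not even in $L^1$.

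The loss occurs in Step 1. There you use only $d_2^2\geqs cK$ on $A_{Kt}$, and discard the fact that \eqref{formu49} grows \emph{linearly} in the trace ratio. The missing idea is to keep this linear growth. Pointwise on the superlevel set, $\Delta u_j\leqs (2C_2t/m^2)\,d_2(D^2u_j,D^2v_j)^2$. Integrating gives a first-moment tail bound of Gehring type, $\int_{\{U>L_*P\}}U\leqs CP\,|\{U>c_*P\}|$, which is \eqref{formu416} in the paper. Such an inequality self-improves for \emph{any} size of the constants: multiply by $P^{\varepsilon-1}$, integrate in $P$ using Tonelli, and absorb, with $\varepsilon$ small depending on $C,L_*,c_*$.

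\textbf{Even with the right inequality, your iteration does not close.} The left side lives on $B_1\cap\{\rho_j\geqs\delta\}$, and this set changes with $P=t$ through $v_j$. The right side lives on $B_{2-l}$, where $\rho_j$ has no lower bound. Nested balls lose a fixed amount of room per step, so they cannot support infinitely many steps.

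The paper avoids this by working with the single $P$-independent function $U=\overline{\rho}_+^4\Delta u$, where $\overline{\rho}=W-u-\delta$, on all of $B_2$:
\begin{itemize}
\item Since $\overline\rho\geqs\rho$, one gets $S/T\geqs V/U$, which drives the absorption step.
\item On $\{\overline\rho>2z\}$ one has $\rho>\overline\rho/2$, hence $T/S\geqs U/(2^4\tau_*P)$.
\item The remaining set $\{0<\overline\rho\leqs 2z\}$ is handled by $\int z\Delta u\leqs C_nM^2\theta_P$. This comes from testing the exact linearization \eqref{formu411} against $w=(|x|^2-4)/2$.
\end{itemize}
Note that the last bound is linear in $\theta_P$. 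An $L^\infty$ bound $\|z\|_\infty\lesssim|\{\eta>0\}|^{1/p}$, as in your final paragraph, does not provide this.

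The weight also makes every constant in the tail estimate depend only on $n,m,M$. In your version, $C_2=\tau_*\delta^{-4}$ and $C_3\sim\sup(W_j-u_j)^4$ depend on $u$. So even a successful iteration would not give $\varepsilon=\varepsilon(n,m,M)$ as the statement requires.
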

\begin{proof}
We begin with the approximation solution $u_j$. Denote $\overline{\rho}_j=W_j-u_j-\delta$. By \eqref{formu42}, we have $\overline{\rho}_j\geqs\delta$ in $B_1$. For simplicity, we suppress the subscript $j$ and write $u,v,W,\rho,\overline{\rho},g$ for $u_j,v_j,W_j,\rho_j,\overline{\rho}_j,g_j$, respectively. Denote
\begin{align*}
T=\Delta u,\quad S=\Delta v,\quad U=\overline{\rho}^4_+ \Delta u,\quad V=\rho^4_+\Delta v.
\end{align*}
Since $u\leqs v\leqs W$, we have $\overline{\rho}\geqs\rho$. By \eqref{formu47}, $V\leqs \tau_*P$ for which $P>1$ is larger than $C_0/\tau_*$. Denote $\mathcal{E}_P=\{\eta(V/P)>0\}$. Recall that $\eta=0$ on $[0,1]$, hence $\mathcal{E}_P\subset\{V>P\}$.

Denote $\mathcal{D}=d_2(D^2u,D^2v)^2$. By \eqref{formu48}, we obtain
\begin{align*}
\int_{B_2}\mathcal{D}\leqs\int_{B_2}(g-m)^2\eta^2\leqs (M-m)^2|\mathcal{E}_P|.
\end{align*}
Set $c_*=m^2/4M^2$. On $\mathcal{E}_P\cap\{U\leqs c_*P\}$, we have $S/T\geqs V/U>1/c_*$ and thus $\mathcal{D}\geqs2M^2$ by virtue of \eqref{formu49}. Therefore,
\begin{align*}
|\mathcal{E}_P|\leqs|\{U>c_*P\}|+\frac{(M-m)^2}{2M^2}|\mathcal{E}_P|,
\end{align*}
which yields that, writing $\theta_P=|\{U>c_*P\}|$,
\begin{align}
|\mathcal{E}_P|\leqs2\theta_P\quad\text{and}\quad\int_{B_2}\mathcal{D}\leqs2(M-m)^2\theta_P.
\label{formu410}
\end{align}

Define $z=v-u\geqs0$. The quadratic structure of \(\sigma_2\) yields the exact linearization
\begin{align}
\sigma_2^{ij}\Big(D^2\frac{u+v}{2}\Big)\partial_{ij}z= \sigma_2(D^2v)-\sigma_2(D^2u).
\label{formu411}
\end{align}
Here, $\sigma_2^{ij}(H)=(\partial/\partial h_{ij})\sigma_2(H)$ and $\operatorname{tr}\sigma_2^{ij}(H)=(n-1)\sigma_1(H)$. Then multiplying \eqref{formu411} by $w=(|x|^2-4)/2$, integrating by parts twice and using the variational structure of $\sigma_2$,
\begin{align*}
\int_{B_2}\operatorname{tr}\sigma_2^{ij}\Big(D^2\frac{u+v}{2}\Big)z=\int_{B_2}w\big(\sigma_2(D^2v)-\sigma_2(D^2u)\big),
\end{align*}
since $D^2w=I_n$ and $w=z=0$ on $\partial B_2$. This implies
\begin{align*}
\int_{B_2}\frac{n-1}{2}(\Delta u+\Delta v)z\leqs\int_{B_2}(-w)2M^2\eta.
\end{align*}
By $\mathcal{E}_P=\{\eta>0\}$ and \eqref{formu410}, we obtain
\begin{align}
\int_{B_2}z\Delta u\leqs C_nM^2|\mathcal{E}_P|\leqs C_nM^2\theta_P.
\label{formu412}
\end{align}

On $\{\overline{\rho}>2z\}$, it follows $\rho=\overline{\rho}-z>\overline{\rho}/2$. Since $V\leqs\tau_*P$, we have
\begin{align}
\frac{T}{S}\geqs\frac{U}{2^4V}\geqs\frac{U}{2^4\tau_*P}.
\label{formu413}
\end{align}
Set $L_*=2^6\tau_*M^2/m^2$. On $\{U>L_*P\}\cap\{\overline{\rho}>2z\}$, by \eqref{formu49} and \eqref{formu413},
\begin{align*}
\mathcal{D}\geqs\frac{m^2U}{2^4\tau_*P}-2M^2\geqs\frac{m^2U}{2^5\tau_*P},
\end{align*}
and hence $U\leqs (2^5\tau_*/{m^2})P\mathcal{D}$. Together with $\eqref{formu410}$, it follows
\begin{align}
\int_{\{U>L_*P\}\cap\{\overline{\rho}>2z\}}U\leqs L_*P\theta_P.
\label{formu414}
\end{align}
Moreover, by \eqref{formu45} and \eqref{formu412},
\begin{align}
\int_{\{0<\overline{\rho}\leqs2z\}}U\leqs\int_{B_2}(2z)^4\Delta u\leqs C_nM^5\theta_P.
\label{formu415}
\end{align}
Let $P>A_0:=\max(C_0/\tau_*,C_nM^5)$. Combining \eqref{formu414} and \eqref{formu415}, we have
\begin{align}
\int_{\{U>L_*P\}}U\leqs(L_*+1)P|\{U>c_*P\}|.
\label{formu416}
\end{align}
Note that $c_*,\tau_*,L_*$ depend only on $n,m,M$; $A_0$ is uniform for $u_j$.

Multiply \eqref{formu416} by $P^{\varepsilon-1}$ with $\varepsilon>0$ to be determined, and integrate over $[A_0,\infty)$. By Tonelli's theorem,
\begin{align}
\int_{A_0}^\infty P^{\varepsilon-1}\int_{\{U>L_*P\}}U\,dP &=\frac{1}{\varepsilon}\int_{\{U>L_*A_0\}}U\bigg[\Big( \frac{U}{L_*}\Big)^\varepsilon-A_0^\varepsilon\bigg],
\label{formu417}\\
\int_{A_0}^\infty P^\varepsilon|\{U>c_*P\}|\,dP&\leqs \frac{1}{(1+\varepsilon)c_*^{1+\varepsilon}}\int_{B_2} U^{1+\varepsilon}.
\label{formu418}
\end{align}
For the remaining part,
\begin{align}
\int_{\{U\leqs L_*A_0\}}U\Big(\frac{U}{L_*} \Big)^ \varepsilon\leqs\int_{\{U\leqs L_*A_0\}}UA_0^\varepsilon.
\label{formu419}
\end{align}
Combining \eqref{formu416}$\sim$\eqref{formu419}, we obtain
\begin{align*}
\int_{B_2}U^{1+\varepsilon}\leqs L_*^\varepsilon A_0^\varepsilon\int_{B_2}U
+\frac{\varepsilon L_*^\varepsilon(L_*+1)}{(1+\varepsilon) c_*^{1+\varepsilon}}\int_{B_2}U^{1+\varepsilon}.
\end{align*}
Choose $\varepsilon=\varepsilon(n,m,M)>0$ sufficiently small, so that the coefficient of the last term is less than $1/2$. Absorbing the last term and restoring the subscript $j$, we have
\begin{align*}
\int_{B_2}\big(\overline{\rho}_{j,+}^4\Delta u_j \big)^{1+\varepsilon}\leqs 2L_*^\varepsilon A_0^\varepsilon\int_{B_2}\overline{\rho}_{j,+}^4\Delta u_j.
\end{align*}
Since $u_j$ has a uniform bound in $B_3$, $\overline{\rho}_{j,+}$ is uniformly bounded. For a positive smooth function $\zeta\in C_c^\infty(B_3)$ which equals one in $B_2$, integrating by parts twice yields a uniform bound for the integral of $\Delta u_j$,
\begin{align*}
\int_{B_2}\Delta u_j\leqs\int_{B_3}\zeta\Delta u_j= \int_{B_3}u_j\Delta\zeta\leqs\sup_{B_3}|u_j|\int_{B_3}|\Delta\zeta|\leqs C(K_0).
\end{align*}
Recall that $\overline{\rho}_j\geqs\delta$ in $B_1$. Combining the above estimates, we arrive at $\int_{B_1}(\Delta u_j)^{1+\varepsilon}\leqs C$ uniformly in $j$; the constant $C$ depends on $n,m,M,\varepsilon,\delta,K_0$ and $C_0$.

Since $u_j\in\Phi^2(B_2)$, $|D^2u_j|\leqs\Delta u_j$. Accordingly, we have $\int_{B_1}|D^2u_j|^{1+\varepsilon}\leqs C$. Therefore, $D^2u_j$ converges weakly in $L^{1+\varepsilon}(B_1)$, up to a subsequence. Since $u_j\to u$ uniformly in $B_3$, the weak limit of $D^2u_j$ coincides with $D^2u$ in the sense of distribution. By the weak lower semi-continuity,
\begin{align*}
\int_{B_1}|D^2u|^{1+\varepsilon}\leqs\liminf_{j\to\infty} \int_{B_1}|D^2u_j|^{1+\varepsilon}\leqs C.
\end{align*}
We conclude that $u\in W^{2,1+\varepsilon}(B_1)$ with a constant $\varepsilon=\varepsilon(n,m,M)>0$.
\end{proof}

\section{Interior $C^{1,\alpha}$ Regularity}
We now establish the interior $C^{1,\alpha}$ regularity for the quadratic Hessian equation. By a rescaling argument, we only need to deal with solutions on $B_4$. We first prove for smooth solutions by contradiction. The proof is inspired by the large-trace propagation argument in \cite[Section 2]{CZZ26sigma2}.
\subsection{The Functions Under Study}
\begin{lemma}\label{lem51}
Let $u_j$ be smooth $2$-subharmonic solutions of
\begin{align*}
S_2[u_j]=f_j\quad\text{in }B_3,
\end{align*}
such that $0<f_j\leqs\Lambda<\infty$, $\Vert u_j\Vert_{L^\infty(B_3)}\leqs C_0$ and $[Du_j]_{C^\alpha(B_1)}\to\infty$ for some $\alpha\in(0,1)$. Here, $\Lambda$ and $C_0$ are uniform constants. For $x\in \overline{B}_2$, define
\begin{align}
P_j(x)=\max_{y\in\overline{B}_2}\frac{|Du_j(y)-Du_j(x)|}{|y-x|^\alpha}.
\label{formu51}
\end{align}
Since $u_j$ is smooth, the maximum value of $P_j(x)$ can be obtained at some point $y\neq x$. Then up to a subsequence, there exist $x_j\in B_{3/2}$ and $M_j\to\infty$, such that
\begin{align}
P_j(x_j)=M_j,\quad0<P_j\leqs2M_j\quad\text{on }B_{R_j}(x_j),
\label{formu52}
\end{align}
where $R_j=M_j^{-\beta}$ with $\beta=(n-1)^{-1}$. Let $y_j$ be the maximizing point of $P_j(x_j)$ in \eqref{formu51}, and denote $\hat{h}_j=|y_j-x_j|$, then
\begin{align*}
\hat{h}_j/R_j\to0.
\end{align*}
\end{lemma}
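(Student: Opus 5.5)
The plan is a standard point-picking (blow-up) argument in the spirit of the large-trace propagation of \cite{CZZ26sigma2}. It uses only two inputs: the uniform Lipschitz bound from Theorem \ref{thm11} and a doubling/selection lemma. First I will record a uniform gradient bound. Since $0<f_j\leqs\Lambda$, we have $\mu(B(x,t))\leqs\Lambda\omega_nt^n$, so $\mathbf{W}_2^\mu(x,1/2)\leqs C(n,\Lambda)$. Together with $\Vert u_j\Vert_{L^\infty(B_3)}\leqs C_0$ and the $L^1$ gradient bound of \cite{TW97measureII}, Theorem \ref{thm11} gives $|Du_j|\leqs K$ on $\overline B_{5/2}$, with $K$ uniform in $j$. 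Consequently, for any $x$ and any $y$ with $|y-x|\geqs r$,
\begin{align*}
\frac{|Du_j(y)-Du_j(x)|}{|y-x|^\alpha}\leqs 2Kr^{-\alpha}.
\end{align*}
Also, $P_j(x)>0$: otherwise $Du_j$ would be constant on $\overline B_2$ and $S_2[u_j]=0$, contradicting $f_j>0$. Finally, $\sup_{\overline B_1}P_j\geqs[Du_j]_{C^\alpha(B_1)}\to\infty$.

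Next I will select $x_j$ by the usual doubling trick, weighted so that a ball of radius $M^{-\beta}$ stays inside $B_{3/2}$. Start at a point $z_0\in\overline B_1$ with $P_j(z_0)\geqs\frac12\sup_{\overline B_1}P_j=:N_j\to\infty$. Inductively, if some $z\in B_{R(z_k)}(z_k)$ with $R(z_k)=P_j(z_k)^{-\beta}$ has $P_j(z)>2P_j(z_k)$, set $z_{k+1}=z$; otherwise stop and put $x_j=z_k$, $M_j=P_j(z_k)$. Along the sequence the values double, so $P_j(z_k)\geqs2^kN_j$. The total displacement is at most
\begin{align*}
\sum_k(2^kN_j)^{-\beta}\leqs C_\beta N_j^{-\beta}\to0,
\end{align*}
so every $z_k$ lies in $B_{3/2}$ for large $j$. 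Since $P_j$ is continuous in $x$, it is bounded on $\overline B_2$, so the process must stop after finitely many steps. At the stopping point, \eqref{formu52} holds with $M_j\geqs N_j\to\infty$ and $R_j=M_j^{-\beta}$, and also $B_{R_j}(x_j)\subset B_{3/2}\subset \overline B_2$ for large $j$. Continuity of $P_j$ holds because $Du_j$ is smooth on $\overline B_2$.

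Finally I will show $\hat h_j/R_j\to0$. By the gradient bound, $M_j=P_j(x_j)\leqs 2K\hat h_j^{-\alpha}$, which gives $\hat h_j\leqs(2K/M_j)^{1/\alpha}$. Hence
\begin{align*}
\frac{\hat h_j}{R_j}\leqs (2K)^{1/\alpha}M_j^{\beta-1/\alpha}.
\end{align*}
This tends to zero provided $\beta<1/\alpha$, i.e.\ $\alpha<n-1$. That always holds, since $\alpha<1\leqs n-1$ when $n\geqs2$; the case $n=1$ is excluded, since $2$-Hessian equations need $n\geqs2$. For $n=2$ we have $\beta=1$ and the exponent $1-1/\alpha<0$ still. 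The only real content is therefore the uniform Lipschitz bound from Theorem \ref{thm11}, and that step is where care is needed. The main obstacle I anticipate is bookkeeping in the selection step: ensuring the iteration stays inside $B_{3/2}$ and terminates, and passing to a subsequence so that all large $j$ qualify. The positivity $P_j>0$ on $B_{R_j}(x_j)$ again follows from $f_j>0$, since $Du_j$ cannot be constant on $\overline B_2$.
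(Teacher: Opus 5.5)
Your proposal is correct and follows essentially the same route as the paper. Both use a doubling point-picking iteration on balls of radius $M^{-\beta}$, whose total displacement is $O(L_j^{-\beta})$ and which terminates because $P_j$ is bounded for smooth $u_j$. Both then use the uniform Lipschitz bound from Theorem~\ref{thm11} to get $\hat h_j\le(2K/M_j)^{1/\alpha}=o(R_j)$, since $\beta<1/\alpha$. Your explicit checks that $P_j>0$ (via $f_j>0$) and that $\mathbf{W}_2^\mu(x,r)\le(\Lambda\omega_n)^{1/2}r$ are details the paper leaves implicit.
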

\begin{proof}
Select $z_j\in B_1$ such that $L_j:=P_j(z_j)\to\infty$. Set $x_{j,0}=z_j$ and $M_{j,0}=L_j$. By induction, we stop at $k\in\mathbb{N}$ if
\begin{align*}
P_j\leqs2M_{j,k}\quad\text{on }B_{M_{j,k}^{-\beta}}(x_{j,k}).
\end{align*}
Otherwise, we choose
\begin{align*}
x_{j,k+1}\in B_{M_{j,k}^{-\beta}}(x_{j,k})\quad\text{and} \quad M_{j,k+1}:=P_j(x_{j,k+1})>2M_{j,k}.
\end{align*}
During the construction, we have $M_{j,k}>2^kM_{j,0}=2^kL_j$, and
\begin{align*}
|x_{j,k}-z_j|\leqs\sum_{l=0}^kM_{j,l}^{-\beta}\leqs \sum_{l\geqs0}(2^lL_j)^{-\beta}=\frac{L_j^{-\beta}}{1-2^{-\beta}}=o(1).
\end{align*}
Thus, after dropping finitely many indices $j$, every point used in the construction belongs to $B_{3/2}$. Moreover, since $u_j$ is smooth, the construction must terminate; otherwise the $C^{1,\alpha}$-norm at $x_{j,k}$ in $B_2$, which equals to $M_{j,k}$, tends to $\infty$ as $k\to\infty$. Denote by $k_j$ the terminal index, and set
\begin{align*}
x_j=x_{j,k_j},\quad M_j=M_{j,k_j}\quad\text{and}\quad R_j=M_j^{-\beta}.
\end{align*}
Hence, $M_j\geqs L_j\to\infty$. This gives \eqref{formu52}.

Moreover, by Theorem \ref{thm11}, we have $|Du_j|\leqs K$ in $B_2$, where the constant $K$ depends only on $n,\Lambda$ and $C_0$. Therefore,
\begin{align*}
M_j=\frac{|Du_j(y_j)-Du_j(x_j)|}{|y_j-x_j|^\alpha}\leqs \frac{2K}{\hat{h}_j^\alpha},
\end{align*}
which yields $\hat{h}_j\leqs(2K/M_j)^{1/\alpha}$. Since $\beta<1/\alpha$, we have $\hat{h}_j/R_j\to0$.
\end{proof}

We next derive a uniform positive second-order difference in a direction by a compactness argument. For a unit vector $e\in\mathbb{S}^{n-1}$, denote
\begin{align*}
Q_h^eu(x)=u(x+he)+u(x-he)-2u(x).
\end{align*}
We write $a\sim b$, if there exist positive constants $C_1,C_2$ such that $C_1a\leqs b\leqs C_2a$.
\begin{lemma}\label{lem52}
Under the hypotheses and notation of Lemma \ref{lem51}, there exist $h_j\sim\hat{h}_j$, points $\omega_j\in B_{R_j/2}(x_j)$ and directions $e_j\in\mathbb{S}^{n-1}$, such that
\begin{align}
Q_{h_j}^{e_j}u_j(\omega_j)\geqs cM_jh_j^{1+\alpha},
\label{formu5new1}
\end{align}
where the constant $c>0$ depends on $n$ and $\alpha$.
\end{lemma}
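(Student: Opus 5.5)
The plan is a blow-up-free, purely finite-difference argument carried out at scale $\hat h_j$. Throughout we work inside $B_{R_j}(x_j)$, where by \eqref{formu52} every point $p$ satisfies $P_j(p)\leqs 2M_j$, i.e. $|Du_j(q)-Du_j(p)|\leqs 2M_j|q-p|^\alpha$. Since $\hat h_j/R_j\to0$ (Lemma \ref{lem51}), all points within distance $C\hat h_j$ of $x_j$ lie in $B_{R_j/2}(x_j)$ for $j$ large. Integrating the gradient along segments then gives the two-sided Taylor control
\begin{align*}
|u_j(p+z)-u_j(p)-Du_j(p)\cdot z|\leqs \tfrac{2}{1+\alpha}M_j|z|^{1+\alpha},
\end{align*}
so that $|Q^e_hu_j(p)|\leqs CM_jh^{1+\alpha}$ at such points. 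Moreover, $e\mapsto Q_h^eu_j(p)$ is Hölder continuous with
\begin{align*}
|Q^e_hu_j(p)-Q^{e'}_hu_j(p)|\leqs CM_jh^{1+\alpha}|e-e'|^\alpha,
\end{align*}
which follows from writing $Q_h^eu_j(p)=h\int_0^1\big(Du_j(p+she)-Du_j(p-she)\big)\cdot e\,ds$.

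\textbf{Step 1: a quantitative mixed second difference.} Set $d=y_j-x_j$ with $|d|=\hat h_j$, and let $\nu$ be the unit vector along $Du_j(y_j)-Du_j(x_j)$, so that $(Du_j(y_j)-Du_j(x_j))\cdot\nu=M_j\hat h_j^\alpha$. Replace the derivatives $Du_j(\cdot)\cdot\nu$ by centered difference quotients $\big(u_j(\cdot+t\nu)-u_j(\cdot-t\nu)\big)/2t$ with $t=\epsilon\hat h_j$. By the Taylor control, the error is at most $CM_j\epsilon^\alpha\hat h_j^\alpha$, so fixing $\epsilon=\epsilon(\alpha)$ small we get
\begin{align*}
\big[u_j(y_j+t\nu)-u_j(y_j-t\nu)\big]-\big[u_j(x_j+t\nu)-u_j(x_j-t\nu)\big]\geqs M_jt\hat h_j^\alpha .
\end{align*}
This is a mixed second difference in the directions $d$ and $\nu$. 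By the polarization identity for the quadratic form of second differences, it equals a combination of second differences $Q$ in the two directions $(d\pm t\nu)/|d\pm t\nu|$, with steps comparable to $\hat h_j$, at a midpoint $\omega$ near $x_j$, up to an error controlled again by the Taylor bound. Consequently there is a direction $e$ and a step $h\sim\hat h_j$ with $|Q^{e}_hu_j(\omega)|\geqs c_1M_jh^{1+\alpha}$, where $c_1=c_1(\alpha)>0$.

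\textbf{Step 2: fixing the sign (the main obstacle).} If $Q^e_hu_j(\omega)$ is positive, we are done. If it is negative, we use that $u_j$ is subharmonic ($\Phi^2\subset\Phi^1$). The mean value property gives
\begin{align*}
\fint_{\mathbb{S}^{n-1}}Q^{e'}_hu_j(\omega)\,de'\geqs0 .
\end{align*}
By the Hölder continuity in $e'$, we have $Q^{e'}_hu_j(\omega)\leqs -\tfrac{c_1}{2}M_jh^{1+\alpha}$ on a cap of directions of measure $\geqs c(n,\alpha)$. Since the total $Q$ over the sphere is bounded by $CM_jh^{1+\alpha}$, nonnegativity of the spherical mean forces some direction $e_j$ with $Q^{e_j}_hu_j(\omega)\geqs c(n,\alpha)M_jh^{1+\alpha}$. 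This gives \eqref{formu5new1} with $\omega_j=\omega\in B_{R_j/2}(x_j)$ and $h_j=h\sim\hat h_j$.

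The delicate points are keeping all error terms in Step 1 below a fixed fraction of $M_j\hat h_j^{\alpha+1}$ through the choice of $\epsilon$, and making the cap measure in Step 2 depend only on $n$ and $\alpha$. If the polarization bookkeeping in Step 1 becomes messy, the fallback is to bound the difference of derivatives via $\int_0^1 D^2u_j(x_j+sd)[d,\nu]\,ds$ (using smoothness of $u_j$). One then polarizes pointwise, averages over $s$, and converts the result back to a second difference by integrating $D^2u_j[e,e]$ along a segment, using Fubini over a thin tube.
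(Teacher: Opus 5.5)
Your argument is correct, and it takes a genuinely different route from the paper's.

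\textbf{The paper's route.} The paper argues by blow-up and compactness. It rescales to $\psi_j(z)=\big(u_j(x_j+\hat h_jz)-u_j(x_j)-\hat h_jDu_j(x_j)\cdot z\big)/(M_j\hat h_j^{1+\alpha})$, which satisfies $\psi_j(0)=0$, $D\psi_j(0)=0$, $|D\psi_j(\xi_j)|=1$ and $[D\psi_j]_{C^\alpha(\overline B_4)}\le2$. If all second differences over $|z|\le2$, $|\eta|\le1$ became asymptotically nonpositive, a $C^1$ limit would be concave and also subharmonic (as a limit of subharmonic functions), hence affine, contradicting the normalization. The H\"older bound then forces $|\eta_j|\ge c_*$, so $h_j\sim\hat h_j$.

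\textbf{Your route.} You stay at the single scale $\hat h_j$ and produce the positive second difference directly, in three moves:
\begin{enumerate}
\item a large mixed difference, coming from the choice of $y_j$ and $\nu$;
\item a pure second difference of large absolute value, by polarization;
\item the sign fixed by the spherical sub-mean-value inequality together with H\"older continuity of $e\mapsto Q^e_hu_j(\omega)$.
\end{enumerate}
Both proofs use subharmonicity for the same purpose, namely to rule out that $u_j$ looks concave at scale $\hat h_j$; yours does so quantitatively.

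\textbf{Two simplifications.} Step 1 is cleaner than you suggest, because the polarization is exact, with no Taylor error. For $\omega=p+(a+b)/2$,
\[
u(p+a+b)-u(p+a)-u(p+b)+u(p)=Q^{e_+}_{h_+}u(\omega)-Q^{e_-}_{h_-}u(\omega),\qquad h_\pm e_\pm=\tfrac{a\pm b}{2}.
\]
Take $a=y_j-x_j$, $b=2t\nu$, $p=x_j-t\nu$. Then $\omega=(x_j+y_j)/2\in B_{R_j/2}(x_j)$, the directions are parallel to $d\pm2t\nu$ (not $d\pm t\nu$), and the steps satisfy $h_\pm\in[\hat h_j/4,\hat h_j]$ once $\epsilon\le1/4$. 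So the fallback through $D^2u_j[d,\nu]$ is unnecessary.

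In Step 2 the a priori upper bound on $Q$ is not needed. The mean of $Q^{e'}_hu_j(\omega)$ over $e'$ equals $2\big(\fint_{\partial B_h(\omega)}u_j-u_j(\omega)\big)\ge0$. Hence the negative cap alone forces
\[
\sup_{e'}Q^{e'}_hu_j(\omega)\ge\tfrac{c_1}{2}M_jh^{1+\alpha}\,|\mathrm{cap}|/|\mathbb{S}^{n-1}|.
\]

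\textbf{What each approach buys.} Your argument gives an explicit, constructive $c(n,\alpha)$ through $\epsilon(\alpha)$, the cap radius and the cap area. It needs no subsequence extraction, gives an explicit point $\omega_j$, and visibly uses only $[Du_j]_{C^\alpha}\le2M_j$ on $B_{R_j}(x_j)$ and $\Delta u_j\ge0$. The paper's compactness argument is shorter and avoids the finite-difference bookkeeping. Its price is a non-explicit constant, whose uniformity in $j$ implicitly rests on compactness of the normalized class.
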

\begin{proof}
For $z\in\overline{B}_4$, set 
\begin{align}
\psi_j(z)=\frac{u_j(x_j+\hat{h}_jz)-u_j(x_j)-\hat{h}_j Du_j(x_j)\cdot z}{M_j\hat{h}_j^{1+\alpha}}.
\label{formu53}
\end{align}
Since $\hat{h}_j=o(R_j)$, we can assume $4\hat{h}_j<R_j$ and $\psi_j$ is well-defined. Then, it satisfies
\begin{align*}
D\psi_j(z)=\frac{Du_j(x_j+\hat{h}_jz)-Du_j(x_j)}{M_j\hat{h}_j^\alpha}.
\end{align*}
Hence, by direct computation, for $\xi_j=(y_j-x_j)/\hat{h}_j$,
\begin{align}
\psi_j(0)=0,\quad D\psi_j(0)=0,\quad |D\psi_j(\xi_j)|=1\quad\text{and}\quad [D\psi_j]_{C^{\alpha}(\overline{B}_4)}\leqs 2.
\label{formu54}
\end{align}
Claim: there exists a constant $c=c(n,\alpha)>0$, such that after passing to a subsequence,
\begin{align}
\max_{|z|\leqs2,|\eta|\leqs1}[\psi_j(z+\eta)+\psi_j(z-\eta)-2\psi_j(z)]\geqs c.
\label{formu55}
\end{align}
Suppose on the contrary that for all $z\in\overline{B}_2, \eta\in\overline{B}_1$, \eqref{formu55} does not hold uniformly for any $c>0$. Since $\psi_j$ has a uniform $C^{1,\alpha}$ bound, a subsequence of $\psi_j$ converges in $C^1(\overline{B}_3)$ to a function $\psi_\infty$. Moreover, it solves
\begin{align*}
\psi_\infty(z+\eta)+\psi_\infty(z-\eta)-2\psi_\infty(z) \leqs0 \quad\text{for all }z\in\overline{B}_2, \eta\in\overline{B}_1.
\end{align*}
Thus, $\psi_\infty$ is concave in $B_2$. On the other hand, $\psi_j\in\Phi^2(B_4)$ is subharmonic. Hence, by passing to the limit, $\psi_\infty$ is subharmonic in the sense of distribution. This implies, the distributional Hessian of $\psi_\infty$ is negative semidefinite, while its distributional Laplacian is nonnegative. Consequently, $\psi_\infty$ is an affine function in $B_2$. This contradicts $D\psi_\infty(0)=0$ and $|D\psi_\infty(\xi_\infty)|=1$, where $\xi_\infty$ is a limit of $\xi_j$ on $\partial B_1$.

Therefore, we obtain $z_j\in\overline{B}_2$ and $\eta_j\in\overline{B}_1$, such that for every $j$,
\begin{align}
\psi_j(z_j+\eta_j)+\psi_j(z_j-\eta_j)-2\psi_j(z_j)\geqs c.
\label{formu56}
\end{align}
By \eqref{formu54}, the uniform gradient seminorm bound gives 
\begin{align*}
|\psi_j(z+\eta)+\psi_j(z-\eta)-2\psi_j(z)|&=\bigg|\int_0^1 \big(D\psi_j(z+t\eta)-D\psi_j(z-t\eta)\big)\cdot\eta dt\bigg|\\
&\leqs\int_0^12|2t\eta|^{\alpha}|\eta|dt\\
&=\frac{2^{1+\alpha}}{1+\alpha}|\eta|^{1+\alpha}.
\end{align*}
Thus, $|\eta_j|\geqs c_*$ for some $c_*=c_*(\alpha,c)>0$. Substituting \eqref{formu53} into estimate \eqref{formu56} yields
\begin{align*}
u_j(\omega_j+\hat{h}_j\eta_j)+u_j(\omega_j-\hat{h}_j \eta_j)-2u_j(\omega_j)\geqs cM_j\hat{h}_j^{1+\alpha},
\end{align*}
where $\omega_j=x_j+\hat{h}_jz_j\in B_{R_j/2}(x_j)$. Let $e_j=\eta_j/|\eta_j|\in\mathbb{S}^{n-1}$ and $h_j=\hat{h}_j|\eta_j|$. By $|\eta_j|\geqs c_*$, we conclude that $h_j\sim\hat{h}_j$ and $Q_{h_j}^{e_j}u_j(\omega_j)\geqs cM_jh_j^{1+\alpha}$.
\end{proof}

\subsection{Propagation of Large Trace}
For brevity, we suppress the subscript $j$ and translate $\omega_j$ to the origin. Accordingly, let $u,M,R,h,x_M,\omega_M,e$ stand for $u_j,M_j,R_j,h_j,x_j,\omega_j,e_j$, respectively. The translated domain we treat below is 
\begin{align*}
\Omega^h:=\{x:B_{h}(x)+[-h,h]e\subset B_R(x_M)\}.
\end{align*}
Fix a smooth positive radial mollifier $\varphi$ supported in $B_1$, and set $\varphi_{\varepsilon}(\cdot)=\varepsilon^{-n}\varphi(\cdot/\varepsilon)$. For $0<\delta<1$ to be chosen independently of the sequence, let
\begin{align*}
v=u*\varphi_{\delta h}\quad\text{in }\Omega^h.
\end{align*}
Since $[Du]_{C^\alpha(B_R(x_M))}\leqs2M$, we have
\begin{equation}
\begin{gathered}
|v-u|\leqs C_{\varphi}M(\delta h)^{1+\alpha},\quad |Dv-Du|\leqs C_{\varphi}M(\delta h)^\alpha,\\
|D^2v|\leqs C_{\varphi}M(\delta h)^{\alpha-1}.
\end{gathered}
\label{formu57}
\end{equation}
Indeed, the first and second estimates follow by averaging the zero-order and first-order difference of $u$ and using the gradient seminorm bound. For the last estimate,
\begin{align*}
|D^2v(x)|&=\bigg|\int\varphi(y)D_x^2u(x-\delta hy)dy\bigg|\\
&=(\delta h)^{-1}\bigg|\int D_y\varphi(y)D_xu(x-\delta hy)dy\bigg|\\
&=(\delta h)^{-1}\bigg|\int D_y\varphi(y)\big(D_xu(x-\delta hy)-D_xu(x)\big)dy\bigg|\\
&\leqs C_\varphi M(\delta h)^{\alpha-1},
\end{align*}
where we make use of integration by parts and $\int D\varphi=0$ in the equality.

Choose $\delta>0$ sufficiently small so that
\begin{align}
C_\varphi\delta^{1+\alpha}\leqs c/32,
\label{formu59}
\end{align}
where $c$ denotes the constant in \eqref{formu5new1}. Together with \eqref{formu5new1} and \eqref{formu57},
\begin{align}
Q_h^ev(0)=v(he)+v(-he)-2v(0)\geqs 7cMh^{1+\alpha}/8.
\label{formu510}
\end{align}
For convenience, we denote $a=cMh^{1+\alpha}$ and $\Theta=Mh^{\alpha-1}$. For a fixed sufficiently small $\theta\in(0,1/4)$, define
\begin{gather*}
V_T=\int_{-1}^1(1-|t|)v(x+the)dt,\quad Z_T=V_T-u,\\
V_\mu=\frac{1-\theta}{2}\big(v(x+he)+v(x-he)\big)+\theta V_T,\quad Z_\mu=V_\mu-u.
\end{gather*}
Since convolution preserves the local gradient seminorm, it follows that $[Dv]_{C^\alpha}\leqs2M$. Accordingly, $v$ satisfies
\begin{gather*}
|V_T-v|+|V_\mu-v|\leqs CMh^{1+\alpha},\\
|D(V_T-v)|+|D(V_\mu-v)|\leqs CMh^\alpha.
\end{gather*}
Together with \eqref{formu57}, we have $|Z_T|+|Z_\mu|\leqs C'Mh^{1+\alpha}$ and $|DZ_T|+|DZ_\mu|\leqs C'Mh^{\alpha}$. Fixing $\theta\in(0,1/4)$ small enough so that $\theta C'\leqs c/32$, we obtain by \eqref{formu57}$\sim$\eqref{formu510},
\begin{align}
Z_\mu(0)=\frac{1-\theta}{2}\big(v(he)+v(-he)-2u(0)\big) +\theta Z_T(0)\geqs a/4.
\label{formu511}
\end{align}
Set $a_0=a/8$. Let $\Sigma$ be the relative component of $\{Z_\mu>a_0\}$ containing the origin, and define $w=(Z_\mu-a_0)\textbf{1}_\Sigma$. Taking the positive part preserves the Lipschitz bound $C'Mh^\alpha$ for $w$ in $\Omega^h$. Furthermore, on $\{w>0\}$,
\begin{align*}
\frac{1-\theta}{2}Q_h^ev(x)&=Z_\mu-(1-\theta)(v-u)-\theta Z_T\\
&\geqs a_0-C_\varphi M(\delta h)^{1+\alpha}-\theta C'Mh^{1+\alpha}\\
&\geqs a/16.
\end{align*}
Hence, $Q_h^ev\geqs a/8$ on $\{w>0\}$.

By the convexity of the G\aa rding cone $\Gamma_2$, we have $v,V_T,V_\mu\in\Phi^2$. For $H\in\Gamma_2$, denote
\begin{align*}
\mathcal{S}(H)=(\operatorname{tr}H)I-H.
\end{align*}
We have $\mathcal{S}(H)=D\sigma_2(H)\geqs0$. Obviously, $\mathcal{S}(H)\leqs2(\operatorname{tr}H)I$.
\begin{lemma}[Ellipticity at upper contacts]\label{lem53}
On $\{w>0\}$, define
\begin{align*}
H_\mu=D^2V_\mu,\quad K=D^2u,\quad B_\mu=\mathcal{S}\Big(\frac{H_\mu+K}{2}\Big).
\end{align*}
Then,
\begin{align}
\langle B_\mu,D^2w\rangle\geqs-\Lambda.
\label{formu512}
\end{align}
Moreover, there exist constants $\tau,c_0,C_0>0$ such that if
\begin{align}
K\geqs H_\mu-\tau\Theta I,
\label{formu513}
\end{align}
then writing $q=\operatorname{tr}K$, we have
\begin{align}
c_0(q+\Theta)I\big|_{e^\perp}\leqs B_\mu\big|_{e^\perp} \leqs C_0(q+\Theta)I\big|_{e^\perp},\quad 0<B_\mu\leqs C_0(q+\Theta)I.
\label{formu514}
\end{align}
\end{lemma}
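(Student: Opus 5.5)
The first claim \eqref{formu512} should follow from an exact algebraic identity. On the open set $\{w>0\}\subset\Sigma$ we have $w=Z_\mu-a_0=V_\mu-u-a_0$, so $D^2w=H_\mu-K$ there. Since $\sigma_2$ is a quadratic form in the matrix entries and $D\sigma_2=\mathcal{S}$ is linear, exactly as in \eqref{formu411} we get
\begin{align*}
\langle B_\mu,D^2w\rangle=\Big\langle\mathcal{S}\Big(\frac{H_\mu+K}{2}\Big),H_\mu-K\Big\rangle=\sigma_2(H_\mu)-\sigma_2(K)=\sigma_2(H_\mu)-f.
\end{align*}
It therefore suffices to show $\sigma_2(H_\mu)\geqs0$, which gives the bound $-f\geqs-\Lambda$. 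For this, note that $v=u*\varphi_{\delta h}$ and $V_\mu$ are positive averages of translates of $u$. Each translate has Hessian in $\overline{\Gamma_2}$, and $\overline{\Gamma_2}$ is a convex cone. Hence $H_\mu\in\overline{\Gamma_2}$ and $\sigma_2(H_\mu)\geqs0$. By the same convexity, $(H_\mu+K)/2\in\Gamma_2$, because $K\in\Gamma_2$ as $f>0$. This gives $B_\mu=D\sigma_2\big((H_\mu+K)/2\big)\geqs0$, and in fact $B_\mu>0$, which I will use for the last claim of \eqref{formu514}.

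For the upper bounds in \eqref{formu514}, I plan to use $\mathcal{S}(A)\leqs2(\operatorname{tr}A)I$ with $A=(H_\mu+K)/2$. Taking traces in \eqref{formu513} gives $\operatorname{tr}H_\mu\leqs q+n\tau\Theta$. Hence $\operatorname{tr}A\leqs q+\frac{n\tau}{2}\Theta$, and so $B_\mu\leqs C_0(q+\Theta)I$. A second route to the same bound is available. From \eqref{formu57}, $|D^2v|\leqs C_\varphi\delta^{\alpha-1}\Theta$, and since $V_\mu$ averages translates of $v$, also $|H_\mu|\leqs C\Theta$ with $\delta,\theta$ already fixed. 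Therefore $\operatorname{tr}A\leqs\frac12 q+C\Theta$ directly.

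The real content is the lower bound on $e^\perp$, which I expect to be the main obstacle. For a unit vector $\xi\perp e$,
\begin{align*}
\mathcal{S}(A)[\xi,\xi]=\operatorname{tr}A-A[\xi,\xi]=\operatorname{tr}\big(A|_{\xi^\perp}\big),
\end{align*}
and $e\in\xi^\perp$. My plan is to show two things: first that $A[e,e]\gtrsim q+\Theta$, and then that the remaining diagonal entries of $A|_{\xi^\perp}$ cannot cancel this. The source of the large $e$-curvature is the second difference, via the exact identity
\begin{align*}
Q_h^ev(x)=h^2\int_{-1}^1(1-|t|)v_{ee}(x+the)\,dt=h^2\,\partial_{ee}V_T(x).
\end{align*}
Since $Q_h^ev\geqs a/8$ on $\{w>0\}$, this gives $\partial_{ee}V_T\geqs c\Theta/8$ there.

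The difficulty is passing from $V_T$ to $H_\mu$: the two point terms $\frac{1-\theta}{2}v_{ee}(x\pm he)$ may be negative, of size up to $C\Theta$. My first attempt is to use ellipticity of $V_\mu$ itself: $H_\mu\in\overline{\Gamma_2}$ bounds negative eigenvalues in terms of $\operatorname{tr}H_\mu$. If that fails, I will apply the Lemma \ref{lem31}-type fact in $\Gamma_2$, namely that $\mathcal{S}(A)[\xi,\xi]\geqs\operatorname{tr}A/(n-1)$ whenever $A[\xi,\xi]\leqs0$. This reduces the problem to directions $\xi\perp e$ with $A[\xi,\xi]>0$, which I will handle using $A[e,e]\geqs$ (large) together with $|A|\leqs\operatorname{tr}A$ on $\Gamma_2$.

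Once $H_\mu[e,e]\geqs c_1\Theta$ is known, \eqref{formu513} with $\tau\leqs c_1/2$ gives $K[e,e]\geqs\frac{c_1}{2}\Theta$, so $A[e,e]\geqs c\Theta$. Finally, I will compare $\operatorname{tr}(A|_{\xi^\perp})$ with $A[e,e]$ and with $q$, using \eqref{formu513} once more to convert statements about $H_\mu$ into statements about $K$. This should yield $\mathcal{S}(A)[\xi,\xi]\geqs c_0(q+\Theta)$ and complete \eqref{formu514}.
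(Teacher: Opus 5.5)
\textbf{Verdict: there is a genuine gap.} Your proofs of \eqref{formu512}, of $B_\mu>0$, and of the upper bounds in \eqref{formu514} are correct and match the paper. The transverse lower bound is where the plan fails.

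\textbf{First gap: the bound $H_\mu[e,e]\geqs c_1\Theta$.} Your route rests on this bound, which you never prove and which does not hold in general. We have $H_\mu[e,e]=\frac{1-\theta}{2}\big(v_{ee}(x+he)+v_{ee}(x-he)\big)+\theta\,\partial_{ee}V_T$.
\begin{itemize}
\item Only the last term is controlled from below, by $\theta c_*\Theta$ via \eqref{formu515}.
\item The endpoint terms are a priori only bounded below by $-C_\varphi\delta^{\alpha-1}\Theta$ from \eqref{formu57}. This swamps $\theta c_*\Theta$, since $\theta$ and $\delta$ were fixed small, so $H_\mu[e,e]$ may well be negative.
\item Membership $H_\mu\in\overline{\Gamma_2}$ only bounds negative eigenvalues by a multiple of $\operatorname{tr}H_\mu$, so it gives nothing positive.
\item Your fallback via Lemma \ref{lem31} only treats directions with $A[\xi,\xi]\leqs0$. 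The remaining directions go back to the same unavailable bound on $A[e,e]$.
\end{itemize}

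The missing idea is to never bound an $ee$-entry of $H_\mu$. Instead, bound $\mathcal{S}$ transversally for $H_T$ alone, then transfer by monotonicity.
\begin{itemize}
\item For a unit $\xi\perp e$, $H_T\in\Gamma_2$ gives $(\xi^TH_T\xi)^2+(e^TH_Te)^2\leqs|H_T|^2\leqs(\operatorname{tr}H_T)^2$.
\item Hence $\xi^T\mathcal{S}(H_T)\xi\geqs q_T-\sqrt{q_T^2-b_T^2}\geqs b_T^2/(2q_T)\geqs c\Theta$, using \eqref{formu515} and $q_T\leqs C\Theta$.
\item $\mathcal{S}$ is linear and positive semidefinite on $\Gamma_2$, so the endpoint Hessians can only increase it. Thus $B_\mu\geqs\frac12\mathcal{S}(H_\mu)\geqs\frac{\theta}{2}\mathcal{S}(H_T)$, i.e.\ $\mathcal{S}(H_\mu)|_{e^\perp}\geqs c_1\Theta$.
\end{itemize}
This alone settles the regime $q\leqs L_0\Theta$.

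\textbf{Second gap: the regime $q\gg\Theta$.} Even granting $A[e,e]\geqs c\Theta$, your plan does not reach the scale $q$. The same bound gives only $\mathcal{S}(A)|_{e^\perp}\gtrsim\Theta^2/q$. Your target $A[e,e]\gtrsim q+\Theta$ amounts to showing that the top eigenvector $\nu$ of $K$ is not almost orthogonal to $e$. Your closing sentence about comparing with $A[e,e]$ and $q$ gives no mechanism for this.

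The paper obtains it as follows.
\begin{itemize}
\item \eqref{formu513}, together with $|H_\mu|\leqs C\Theta$ and $\sigma_2(K)\leqs\Lambda$, forces $K$ to be nearly rank one: $|\lambda_i(K)|\leqs C\Theta$ for $i\geqs2$, and $q-\lambda_1\leqs C\Theta^2/q$.
\item Taking the trace of $H_\mu\leqs K+\tau\Theta I$ on $\nu^\perp$ gives $\nu^T\mathcal{S}(H_\mu)\nu\leqs C\Theta^2/q+(n-1)\tau\Theta$.
\item For $L_0$ large and $\tau$ small, this contradicts the transverse bound $\mathcal{S}(H_\mu)|_{e^\perp}\geqs c_1\Theta$ unless $|\nu\cdot e|>\kappa_0$.
\item Then $\xi^T\mathcal{S}(K)\xi\geqs\kappa_0^2q-C\Theta$ for every $\xi\perp e$.
\end{itemize}
So the transverse bound on $\mathcal{S}(H_\mu)$, which your plan lacks, is the key input in both regimes.
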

\begin{proof}
Quadratic polarization gives the exact linearization
\begin{align*}
\langle B_\mu,D^2w\rangle=\sigma_2(H_\mu)-\sigma_2(D^2u)\geqs-f,
\end{align*}
and \eqref{formu512} follows from $0<f\leqs\Lambda$.

Set $H_T=D^2V_T$. Recall that $a=cMh^{1+\alpha},\Theta=Mh^{\alpha-1}$. Since $Q_h^ev\geqs a/8$ on $\{w>0\}$, we have
\begin{align}
e^TH_Te=\frac{v(x+he)+v(x-he)-2v(x)}{h^2}\geqs\frac{a/8}{h^2}=c_*\Theta.
\label{formu515}
\end{align}
On the other hand, by \eqref{formu57} we have $|D^2v|\leqs C\Theta$. Consequently, $\operatorname{tr}H_T+\operatorname{tr}H_\mu\leqs C\Theta$. For any unit vector $\xi\perp e$, we have by $H_T\in\Gamma_2$,
\begin{align*}
(\xi^TH_T\xi)^2+(e^TH_Te)^2\leqs|H_T|^2\leqs (\operatorname{tr}H_T)^2.
\end{align*}
Denote $q_T=\operatorname{tr}H_T$ and $b_T=e^TH_Te$.
Then, by \eqref{formu515}
\begin{align*}
\xi^T\mathcal{S}(H_T)\xi\geqs q_T-\sqrt{q_T^2-b_T^2}= \frac{b_T^2}{q_T+\sqrt{q_T^2-b_T^2}}\geqs\frac{c_*^2}{2C}\Theta.
\end{align*}
Since $\mathcal{S}(\cdot)$ is linear and positive semidefinite on $\Gamma_2$, and $H_T$ occurs with a fixed positive weight $\theta$ in $H_\mu$, we obtain $\mathcal{S}(H_\mu)\big|_{e^\perp}\geqs c_1\Theta I\big|_{e^\perp}$ for some constant $c_1>0$.

The upper bound of \eqref{formu514} follows directly from the linearity of $\mathcal{S}$ and $\operatorname{tr}H_\mu\leqs C\Theta$. If $q\leqs L_0\Theta$ with some constant $L_0$ to be determined, then the lower bound follows from $B_\mu\geqs\mathcal{S}(H_\mu)/2$ and $\mathcal{S}(H_\mu)\big|_{e^\perp}\geqs c_1\Theta I\big|_{e^\perp}$, by letting $c_0=c_1/2(L_0+1)$.

Next, we assume $q>L_0\Theta$. Since $|H_\mu|\leqs\operatorname{tr}H_\mu\leqs C\Theta$, we obtain by \eqref{formu513},
\begin{align}
\lambda_i(K)\geqs-(C+\tau)\Theta.
\label{formu516}
\end{align}
Order the eigenvalues as $\lambda_1\geqs\cdots\geqs\lambda_n$. Then $\lambda_1\geqs q/n$ and $q-\lambda_1=\lambda_2+\cdots+\lambda_n\geqs0$. In particular, $\lambda_2\geqs0$. Expanding $\sigma_2(\lambda)$ and using \eqref{formu516},
\begin{align*}
\sigma_2(K)=\lambda_1\lambda_2+\sum_{i+j>3}\lambda_i\lambda_j\geqs\lambda_1\lambda_2-Cq\Theta.
\end{align*}
Since $\sigma_2(K)\leqs\Lambda$, $\lambda_1\geqs q/n$ and $\Theta>1$, we obtain $\lambda_2\leqs C\Theta$. Hence, $|\lambda_i|\leqs C\Theta$ for $i\geqs2$. Expanding $\sigma_2(\lambda)$ again yields
\begin{align*}
\lambda_1(q-\lambda_1)=\sigma_2(K)-\sigma_2(\lambda_2, \ldots,\lambda_n)\leqs C\Theta^2,
\end{align*}
which implies $q-\lambda_1\leqs C\Theta^2/q$.

Let $\nu$ be a unit eigenvector of $K$ for $\lambda_1$. Taking the trace of $H_\mu\leqs K+\tau\Theta I$ on $\nu^\perp$ shows that
\begin{align}
\nu^T\mathcal{S}(H_\mu)\nu=\operatorname{tr} \big|_{\nu^\perp}H_\mu\leqs(q-\lambda_1)+(n-1)\tau\Theta \leqs C\Theta^2/q+(n-1)\tau\Theta.
\label{formu517}
\end{align}
On the other hand, $\mathcal{S}(H_\mu)\big|_{e^\perp}\geqs c_1\Theta I\big|_{e^\perp}$ and $\mathcal{S}(H_\mu)\leqs C\Theta I$. We claim that there exists a positive number $\kappa_0$, such that if $|\nu\cdot e|\leqs\kappa_0$, then $\nu^T\mathcal{S}(H_\mu)\nu\geqs c_1\Theta/2$. Indeed, by writing $\nu=\nu_\perp+(\nu\cdot e)e$, we have $|\nu_\perp|^2\geqs1-\kappa_0^2$ and
\begin{align*}
\nu^T\mathcal{S}(H_\mu)\nu&=\nu_\perp^T\mathcal{S}(H_\mu)\nu_\perp+2(\nu\cdot e)e^T\mathcal{S}(H_\mu)\nu_\perp+(\nu\cdot e)^2e^T\mathcal{S}(H_\mu)e\\
&\geqs(1-\kappa_0^2)c_1\Theta-2\kappa_0C\Theta.
\end{align*}
The claim follows by taking $\kappa_0>0$ small. Let $L_0$ be sufficiently large and $\tau$ be sufficiently small, so that the right side of \eqref{formu517} is less than $c_1\Theta/2$. This forces $|\nu\cdot e|>\kappa_0$.

For any $\xi\perp e$, write $\xi=\xi_\perp+(\xi\cdot\nu)\nu$. Since $|\xi\cdot\nu|^2+|e\cdot\nu|^2\leqs1$, we have $|\xi\cdot\nu|^2\leqs1-\kappa_0^2$. By $|\lambda_i(K)|\leqs C\Theta$ for $i\geqs2$, we have $|K\xi_\perp|\leqs C\Theta$ and hence,
\begin{align*}
\xi^T\mathcal{S}(K)\xi&=\operatorname{tr}K-\xi^TK\xi\\
&=q-(\xi\cdot\nu)^2\nu^TK\nu-2(\xi\cdot\nu)\nu^TK\xi_\perp-\xi_\perp^TK\xi_\perp\\
&\geqs q-(1-\kappa_0^2)\lambda_1-C\Theta\geqs\kappa_0^2q-C\Theta.
\end{align*}
Since $q>L_0\Theta$, the last term is larger than $\kappa_0^2q/2$ by enlarging $L_0$ if necessary. Therefore, $\mathcal{S}(K)\big|_{e^\perp}\geqs c_2qI\big|_{e^\perp}$ holds for some constant $c_2>0$. This proves the lower bound of \eqref{formu514} in the remaining case.
\end{proof}

Now, we are ready to prove the transverse propagation estimate using the Alexandrov-Bakelman-Pucci estimate in a form that involves only the upper contact points.
\begin{lemma}\label{lem54}
For a fixed small constant $c_\rho>0$, let
\begin{align}
\rho=c_\rho\min(R,\Theta h),
\label{formu518}
\end{align}
such that the cylinder
\begin{align*}
Q_\rho:=(-\rho,\rho)e+B^{e^\perp}_\rho\Subset\Omega^h.
\end{align*}
Then there exists a measurable set $\mathcal{C}\subset Q_\rho$ satisfying
\begin{align}
|\mathcal{C}|\geqs\bar{c}\rho^{n-1}h,\qquad w>0\quad\text{on }\mathcal{C}.
\label{formu519}
\end{align}
\end{lemma}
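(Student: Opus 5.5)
I will prove the lemma by contradiction, applying an Alexandrov--Bakelman--Pucci estimate on each slab transverse to $e$ and then integrating along $e$. I argue on each $(n-1)$-dimensional slice $\{te\}+B^{e^\perp}_\rho$ with $|t|\leqs h$. The first step is to show that $w$ is large at one point of each of these slices. By \eqref{formu511} we have $w(0)\geqs a/4-a/8=a/8$. The Lipschitz bound $|Dw|\leqs C'Mh^\alpha$ then gives $w\geqs a/16$ on the segment $\{te:|t|\leqs c'h\}$, since $a=cMh^{1+\alpha}$. So for $h$-many parallel slices, $w$ has a value of order $a$ at the slice's center.

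The second step is to propagate positivity across each slice $B^{e^\perp}_\rho$ using the upper-contact ABP estimate. On $\{w>0\}$, Lemma \ref{lem53} gives $\langle B_\mu,D^2w\rangle\geqs-\Lambda$. At upper contact points of $w$ (with respect to its concave envelope) one has $D^2w\leqs0$, $D^2u\geqs D^2V_\mu$ in the viscosity sense, and so \eqref{formu513} holds with room to spare. Hence at those points $B_\mu$ is uniformly elliptic on $e^\perp$ with ellipticity ratio bounded, and its trace is comparable to $q+\Theta$. The determinant factor in ABP, $\det(B_\mu|_{e^\perp})^{1/(n-1)}\gtrsim q+\Theta\geqs\Theta$, turns the right-hand side $\Lambda$ into an error of order $\Lambda\rho/\Theta\cdot$(measure of the contact set)$^{1/(n-1)}$. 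This error is negligible compared with $a$ because $\rho\leqs c_\rho\Theta h$ and $\Theta>1$; the constant $c_\rho$ is chosen for exactly this purpose.

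Concretely, I plan a contradiction on each slice. Suppose $\{w>0\}$ occupies only a tiny fraction $\varepsilon\rho^{n-1}$ of $B^{e^\perp}_\rho$ on that slice. I apply ABP in the $(n-1)$ variables to $w$ restricted to the slice, using the boundary value $w\leqs 0$ outside the positivity set and near $\partial B_\rho$. This gives
\[
\frac{a}{16}\leqs \sup w\leqs C\rho\,\Big\Vert \frac{\Lambda}{\Theta}\Big\Vert_{L^{n-1}(\{w>0\}\cap\text{contact})}\leqs C\rho\,\Lambda\,\Theta^{-1}\varepsilon^{1/(n-1)}\rho,
\]
which is impossible once $\rho\lesssim \Theta h$ and $\varepsilon,c_\rho$ are small. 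It then follows that each slice carries $\{w>0\}$ with measure at least $\bar c\rho^{n-1}$. Integrating over $|t|\leqs c'h$ by Fubini yields the set $\mathcal C$ with $|\mathcal C|\geqs\bar c\rho^{n-1}h$.

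The main obstacle is justifying the restricted ABP. The full Hessian $D^2w$ involves the $e$-direction, where $B_\mu$ is not controlled from below, so the slice operator must absorb the term $B_\mu[e,e]\partial_{ee}w$ and the mixed terms. I would handle this by using the upper contact set of the $n$-dimensional envelope taken over the cylinder $Q_\rho$, where $\partial_{ee}w\leqs0$ and $B_\mu\geqs0$. The $e$-term then has the favorable sign, and the bound $0<B_\mu\leqs C_0(q+\Theta)I$ from \eqref{formu514} controls the remaining terms. I would also have to verify \eqref{formu513} at contact points: $D^2w\leqs0$ there gives $D^2u\geqs H_\mu$ directly, with $\tau$ arbitrary.
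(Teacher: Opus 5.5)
\textbf{There is a genuine gap.} It sits at the step you yourself call the main obstacle, and the remedy you propose does not close it.

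On a fixed slice $\{t_0e\}+B^{e^\perp}_\rho$, a contact point of the $(n-1)$-dimensional concave envelope controls only the $e^\perp\times e^\perp$ block of $D^2w$. Nothing is known about $\partial_{ee}w$ or the mixed derivatives $\partial_{e\xi}w$. In fact $\partial_{ee}w=H_\mu[e,e]-K[e,e]$ is not bounded above there: for $n\geqs3$ an element of $\Gamma_2$ can have a negative eigenvalue comparable to its trace $q$. So at such points you cannot verify \eqref{formu513}, and Lemma \ref{lem53} is unavailable. Nor can you extract the slice inequality $\langle B_\mu|_{e^\perp},D^2_yw\rangle\geqs-\Lambda-\dots$ that an $(n-1)$-dimensional ABP needs.

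Passing to contact points of the $n$-dimensional envelope over $Q_\rho$ does give $D^2w\leqs0$, hence \eqref{formu513}. But the mixed terms $2B_\mu[e,\xi]\partial_{e\xi}w$ are still not controlled by $B_\mu\leqs C_0(q+\Theta)I$, because Lemma \ref{lem53} gives no lower bound for $B_\mu$ in the $e$ direction. Concretely, let $v$ be a near-null vector of $B_\mu$; it can have an $e^\perp$-component comparable to its $e$-component. Then $D^2w=-s\,v\otimes v$ with $s\leqs\Lambda/B_\mu[v,v]$ is compatible with $D^2w\leqs0$ and $\langle B_\mu,D^2w\rangle\geqs-\Lambda$, yet its $e^\perp$ block is arbitrarily negative. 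For the same reason a full $n$-dimensional ABP fails, since it needs a lower bound on $\det B_\mu$. In any case, these contact points are not the contact points of the slice envelopes on which you run ABP.

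Separately, your boundary claim ``$w\leqs0$ near $\partial B_\rho$'' is unjustified. Since $w\geqs0$, it may be positive up to the lateral boundary, so a quadratic penalty in $y$ is needed.

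\textbf{The missing idea is to maximize along the fibers before applying ABP.} On $B^{e^\perp}_\rho$ set
\[
\Psi(y)=\max_{|t|\leqs\rho}\Big(w(te+y)-A_*a\frac{t^2}{\rho^2}\Big)-A_*a\frac{|y|^2}{\rho^2}-\frac{a}{16},\qquad A_*=\frac{4\sup w}{a}+1 .
\]
Take a contact point $y$ of the concave envelope $\Gamma$ of $\Psi_+$. There the function $\phi(t,y)=\Gamma(y)+A_*a(t^2+|y|^2)/\rho^2+a/16$ touches $w$ from above in all $n$ variables at $(t_y,y)$, with $|t_y|<\rho/2$, and $D^2\phi$ is block diagonal.

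\begin{itemize}[leftmargin=11pt]
\item This gives the full matrix bound $D^2w\leqs D^2\phi\leqs(2A_*a/\rho^2)I\ll\Theta I$, using $\rho\gg h$. Hence \eqref{formu513} holds.
\item Pairing $B_\mu\geqs0$ with $D^2\phi-D^2w\geqs0$ gives $\langle B_\mu|_{e^\perp},D^2\Gamma\rangle\geqs-\Lambda-C(q+\Theta)a/\rho^2$, with no cross terms.
\item Therefore $\operatorname{tr}(-D^2\Gamma)\leqs Ca/\rho^2$. Here $\rho\leqs c_\rho\Theta h$ is used only to ensure $\Theta^{-1}\lesssim a/\rho^2$.
\item The $(n-1)$-dimensional ABP then yields $|S|\geqs\hat c\rho^{n-1}$ directly.
\end{itemize}

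The conclusion concerns fibers, not fixed slices: over each $y\in S$ some point $t_ye+y$ satisfies $w\geqs a/16$. Only afterwards is the Lipschitz bound in $e$ used, to obtain an interval of length $\gtrsim h$ in each such fiber and conclude by Fubini. So the Lipschitz step belongs at the end, not in your Step 1. The slice-by-slice measure bound you aim for is stronger than the lemma needs and is not reachable with these tools.
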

\begin{proof}
Since $h/R\to0$ and $\omega_M\in B_{R/2}(x_M)$, the constant $c_\rho$ is universal for all large $M$. Take $A_*=4\sup w/a+1$, and define on the transverse ball $B^{e^\perp}_\rho$,
\begin{align*}
\Psi(y)=\max_{|t|\leqs\rho}\bigg(w(te+y)-A_*a\frac{t^2}{\rho^2}\bigg)-A_*a\frac{|y|^2}{\rho^2}-\frac{a}{16}.
\end{align*}
By \eqref{formu511}, we have $w(0)\geqs a/8$ and hence $\Psi(0)\geqs a/16$, whereas $\Psi(y)<0$ for $|y|\geqs\rho/2$. Let $\Gamma$ be the upper concave envelope of $\Psi_+$ on $B^{e^\perp}_\rho$, and let $S$ denote the interior positive contact set $\{\Gamma=\Psi_+\}$. At $y\in S$, $\Psi(y)>0$ and every maximizing point $t_y$ satisfies
\begin{align*}
A_*a\frac{t_y^2}{\rho^2}+A_*a\frac{|y|^2}{\rho^2}+\frac{a}{16}\leqs w(t_ye+y).
\end{align*}
Hence, $t_y$ lies on $(-\rho/2,\rho/2)$; the penalty exceeds $\sup w$ for $|t|\geqs\rho/2$ by our choice of $A_*$. In particular, $w(t_ye+y)\geqs a/16$ on $S$.

We next estimate the Monge-Ampère density of the concave envelope on $S$. At the second-order Alexandrov point $y_0$ of $\Gamma$, we have $D^2\Gamma\leqs0$ and
\begin{align*}
\phi:\quad(t,y)\mapsto\Gamma(y)+A_*a\frac{t^2}{\rho^2} +A_*a\frac{|y|^2}{\rho^2}+\frac{a}{16}
\end{align*}
touches $w$ from above at an interior maximizing point. Since the mixed Hessian block of $\phi$ with respect to $e\otimes e^\perp$ is exactly zero, we infer that
\begin{align}
D^2w\leqs D^2\phi=D^2\Gamma+\frac{2A_*a}{\rho^2}I\leqs \frac{2A_*a}{\rho^2}I.
\label{formu520}
\end{align}
Since $h/R\to0$ and $\Theta=Mh^{\alpha-1}\to\infty$, we can assume $\rho>C_*h$ with arbitrarily large $C_*$. Hence, by $w>0$ at the lifted contact point, we have
\begin{align*}
H_\mu-K=D^2w\leqs\frac{2A_*a}{\rho^2}I\leqs\frac{C}{C_*^2}\Theta I.
\end{align*}
Then \eqref{formu513} follows by taking $C_*$ sufficiently large. Consequently, Lemma \ref{lem53} yields
\begin{align}
c_0I\big|_{e^\perp}\leqs\frac{B_\mu\big|_{e^\perp}}{q+\Theta}\leqs C_0I\big|_{e^\perp},\qquad0<\frac{B_\mu}{q+\Theta}\leqs C_0I.
\label{formu521}
\end{align}
Together with \eqref{formu512} and \eqref{formu520}, we obtain
\begin{align*}
\Big\langle\frac{B_\mu\big|_{e^\perp}}{q+\Theta},D^2\Gamma \Big\rangle\geqs\Big\langle\frac{B_\mu}{q+\Theta},D^2w \Big\rangle-\Big\langle\frac{B_\mu}{q+\Theta},\frac{2A_*a}{\rho^2}I\Big\rangle\geqs-C\Big(\frac{1}{\Theta}+\frac{a}{\rho^2}\Big).
\end{align*}
Since $\rho\leqs c_\rho\Theta h$, then $\Theta^{-1}\leqs c'a/\rho^2$. Thus, the lower ellipticity bound in \eqref{formu521} implies
\begin{align*}
\operatorname{tr}(-D^2\Gamma)\leqs C\frac{a}{\rho^2}.
\end{align*}
Applying the arithmetic–geometric mean inequality, we obtain
\begin{align}
\det(-D^2\Gamma)\leqs C\Big(\frac{a}{\rho^2}\Big)^{n-1} \quad\text{at almost every point of }S.
\label{formu522}
\end{align}

To continue, we apply the ABP estimate for continuous solutions, see \cite[Section 3]{CCnonlinear},
\begin{align}
\sup\Psi_+\leqs C(n)\rho\Big(\int_S\det(-D^2\Gamma)\Big)^{1/(n-1)}.
\label{formu523}
\end{align}
It is valid for the present functions: $w$ is semiconvex, as the positive part of a smooth function; taking the maximum of $\big[w(te+y)-A_*at^2/\rho^2\big]$ over a compact axial interval preserves the semiconvexity with a finite constant. Therefore, $\Psi$ is semiconvex in $B^{e^\perp}_\rho$. Hence, the concave envelope $\Gamma$ is of class $C^{1,1}$, and the ABP estimate \eqref{formu523} holds in the full-measure set satisfying \eqref{formu522}. Recalling $\Psi(0)\geqs a/16$, we conclude that
\begin{align*}
|S|\geqs \hat{c}\rho^{n-1}.
\end{align*}
For every $y\in S$, $w(t_ye+y)\geqs a/16$ for some $|t_y|\leqs\rho/2$. Since $w$ has a Lipschitz bound $C'Mh^\alpha$, its fiber superlevel set $\{w>0\}$ contains an interval with length at least $c'h$. This interval remains in $(-\rho,\rho)e$. Applying Fubini's theorem yields
\begin{align*}
|\{w>0\}\cap Q_\rho|\geqs\bar{c}\rho^{n-1}h,
\end{align*}
where $\bar{c}>0$ is a uniform constant. This finishes the proof of Lemma \ref{lem54}.
\end{proof}
\begin{remark}\label{rmk51}
The proof does not assert uniform ellipticity on $e^\perp$ for non-contact points. In particular, we cannot infer a pointwise bound for $D^2u$ from an averaged Hessian bound. The essential tool is the matrix comparison \eqref{formu513}, which holds at contact points by virtue of the upper test.
\end{remark}

Recall that $v=u*\varphi_{\delta h}$ and
\begin{align*}
V_T=\int_{-1}^1(1-|t|)v(x+the)dt,\quad H_T=\int_{-1}^1(1-|t|)D^2v(x+the)dt.
\end{align*}
Since both the mollifier and the triangular weight have total mass one, we have
\begin{align*}
\operatorname{tr}H_T=\int_{-1}^1(1-|t|)(\Delta u*\varphi_{\delta h})(x+the)dt=:\mathcal{F}_h*\Delta u,
\end{align*}
where $\mathcal{F}_h$ is a positive probability measure compactly supported in $B_h+[-h,h]e$. Specifically, for $x\in\Omega^h$, $\mathcal{F}_h*\Delta u(x)$ depends only on the value of $\Delta u$ in $B_R(x_M)$.

Since $H_T\in\Gamma_2$, we have $\operatorname{tr}H_T\geqs e^TH_Te$. According to \eqref{formu515},
\begin{align}
\mathcal{F}_h*\Delta u=\operatorname{tr}H_T\geqs e^TH_Te\geqs c_*\Theta\quad \text{on }\{w>0\}.
\label{formu524}
\end{align}
In particular, it holds on $\mathcal{C}$, where $|\mathcal{C}|\geqs\bar{c}\rho^{n-1}h$.

\subsection{Completion of the Proof}
We first prove the interior $C^{1,\alpha}$ estimate by contradiction, under the $W^{2,p}$-regularity assumption on solutions.
\begin{proposition}\label{pro51}
Assume that $p>1$ and
\begin{align*}
0<\alpha\leqs\frac{p-1}{n+p-1}.
\end{align*}
Suppose that $u\in\Phi^2(B_3)\cap C^\infty(B_3)$ satisfies
\begin{align*}
0<S_2[u]\leqs\Lambda<\infty,\quad\Vert u\Vert_{L^\infty(B_3)}\leqs C_0,\quad\int_{B_2}(\Delta u)^p\leqs L_0,
\end{align*}
then it holds
\begin{align}
[Du]_{C^\alpha(B_1)}\leqs C(n,p,\alpha,\Lambda,C_0,L_0).
\label{formu525}
\end{align}
\end{proposition}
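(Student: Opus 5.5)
We argue by contradiction, using the machinery of Lemmas \ref{lem51}--\ref{lem54}. Suppose there is a sequence $u_j\in\Phi^2(B_3)\cap C^\infty(B_3)$ satisfying the hypotheses with the same $p,\alpha,\Lambda,C_0,L_0$, but with $[Du_j]_{C^\alpha(B_1)}\to\infty$. By Lemma \ref{lem51} we obtain points $x_j$, scales $M_j\to\infty$, $R_j=M_j^{-\beta}$ with $\beta=(n-1)^{-1}$, and $\hat h_j=o(R_j)$. By Lemma \ref{lem52} we obtain $h_j\sim\hat h_j$, directions $e_j$ and points $\omega_j$ with a positive second difference. After this, the constructions of Section 5.2 apply to each $u_j$, so by Lemma \ref{lem54} and \eqref{formu524} there is a set $\mathcal C_j\subset Q_{\rho_j}\subset B_{R_j}(x_j)$ with
\begin{align*}
|\mathcal C_j|\geqs \bar c\rho_j^{n-1}h_j,\qquad \mathcal F_{h_j}*\Delta u_j\geqs c_*\Theta_j\ \text{ on }\mathcal C_j,
\end{align*}
where $\Theta_j=M_jh_j^{\alpha-1}$ and $\rho_j=c_\rho\min(R_j,\Theta_jh_j)$.

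The next step is to convert this into a lower bound on $\int(\Delta u_j)^p$ and contradict the bound $L_0$. Since $\mathcal F_h$ is a probability measure and $t\mapsto t^p$ is convex, Jensen's inequality gives $(\mathcal F_h*\Delta u)^p\leqs\mathcal F_h*(\Delta u)^p$. Integrating over $\mathcal C_j$ and using that $\mathcal F_h*(\cdot)$ at points of $\Omega^h$ only sees values in $B_{R_j}(x_j)\subset B_2$, we obtain
\begin{align*}
c_*^p\Theta_j^p\,\bar c\,\rho_j^{n-1}h_j\leqs\int_{\mathcal C_j}\mathcal F_{h_j}*(\Delta u_j)^p\leqs\int_{B_2}(\Delta u_j)^p\leqs L_0 .
\end{align*}
Here the middle inequality uses Fubini and the fact that translation-averaging does not increase the $L^1$ norm.

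It then remains to show that the left side tends to infinity, which is where the exponent condition enters, and which I expect to be the delicate bookkeeping step. Set $\Theta h=Mh^{\alpha}$. There are two cases.

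\emph{Case} $\rho=c_\rho R=c_\rho M^{-\beta}$. The left side is comparable to $M^{p}h^{p(\alpha-1)+1}M^{-\beta(n-1)}=M^{p-1}h^{1-p(1-\alpha)}$. If $1-p(1-\alpha)\leqs0$, then since $h\to0$ this is at least $M^{p-1}\to\infty$. Otherwise, use $h\lesssim M^{-1/\alpha}$ from the proof of Lemma \ref{lem51}, which gives a lower bound of order $M^{p-1-(1-p(1-\alpha))/\alpha}$. Its exponent is positive iff $\alpha(p-1)>1-p+p\alpha$, i.e. iff $p-1>\alpha$; this holds, since $\alpha\leqs(p-1)/(n+p-1)<p-1$.

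\emph{Case} $\rho=c_\rho Mh^\alpha$. The left side is comparable to $M^{p+n-1}h^{p(\alpha-1)+1+(n-1)\alpha}$. Its $h$-exponent is $1-p+\alpha(n+p-1)\leqs0$ exactly by the hypothesis $\alpha\leqs\frac{p-1}{n+p-1}$. Since $h\to0$, the expression is therefore at least $M^{p+n-1}\to\infty$.

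In both cases the left side diverges, contradicting the bound $L_0$. Hence $[Du]_{C^\alpha(B_1)}$ is bounded by a constant depending on $n,p,\alpha,\Lambda,C_0,L_0$. These are the only quantities used, since the gradient bound $K$ in Lemma \ref{lem51} depends only on $n,\Lambda,C_0$ through Theorem \ref{thm11}.

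The main obstacle is checking that all constants in Lemmas \ref{lem52}--\ref{lem54} (namely $c,\delta,\theta,c_\rho,\bar c,c_*$) are uniform in $j$, and that $Q_{\rho_j}$ and its $h$-neighborhood stay inside $B_{R_j}(x_j)\subset B_2$. Only then is the integral compared against $L_0$ over $B_2$.
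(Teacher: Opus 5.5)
Your proposal follows the paper's proof step for step: contradiction via Lemmas \ref{lem51}--\ref{lem54}, Jensen plus Fubini to get $\rho_j^{n-1}h_j\Theta_j^p\leqs CL_0$, and the same two-case exponent count. One correction: in the case $\rho=c_\rho R$ your ``otherwise'' branch never occurs, because $\alpha\leqs\frac{p-1}{n+p-1}<\frac{p-1}{p}$ (as $n\geqs2$) forces $1-p(1-\alpha)<0$, which is what the paper uses; that is fortunate, since your argument there runs the wrong way: $h\lesssim M^{-1/\alpha}$ is an upper bound on $h$ and, for a positive exponent $\gamma$, only gives the upper bound $M^{p-1}h^{\gamma}\lesssim M^{p-1-\gamma/\alpha}$, not a lower bound.
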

\begin{proof}
Assume the assertion is false. Then there exist smooth functions $u_j\in\Phi^2(B_4)$, with
\begin{align*}
0<S_2[u_j]\leqs\Lambda<\infty,\quad\Vert u_j\Vert_{L^\infty(B_3)}\leqs C_0,\quad\int_{B_2}(\Delta u_j)^p\leqs L_0,
\end{align*}
but
\begin{align*}
[Du_j]_{C^\alpha(B_1)}\to\infty.
\end{align*}
Combining Lemma \ref{lem51}$\sim$\ref{lem54} and extracting a subsequence, we have
\begin{gather*}
M_j\to\infty,\quad R_j=M_j^{-\beta},\quad\beta=(n-1)^{-1},\\
h_j/R_j\to0,\quad\Theta_j=M_jh_j^{\alpha-1}, \quad\rho_j=c_\rho\min(R_j,\Theta_jh_j),
\end{gather*}
measurable sets $\mathcal{C}_j\subset\Omega^j\Subset B_2$ and positive probability measures $\mathcal{F}_j$, such that
\begin{align}
|\mathcal{C}_j|\geqs\bar{c}\rho_j^{n-1}h_j,\quad \mathcal{F}_j*\Delta u_j\geqs c_*\Theta_j\quad\text{on }\mathcal{C}_j.
\label{formu526}
\end{align}
On the other hand, Jensen's inequality and Fubini's theorem yields
\begin{align}
\int_{\Omega^j}(\mathcal{F}_j*\Delta u_j)^p\leqs\int_{B_2}(\Delta u_j)^p\leqs L_0.
\label{formu527}
\end{align}
Combining \eqref{formu526} and \eqref{formu527}, we obtain $\rho_j^{n-1}h_j\Theta_j^p\leqs CL_0$. We next consider two cases separately. For the case $R_j\geqs\Theta_jh_j$,
\begin{align*}
\rho_j^{n-1}h_j\Theta_j^p=cM_j^{p+n-1}h_j^{\alpha(p+n-1)-(p-1)}.
\end{align*}
The exponent of $h_j$ is nonpositive by our choice of $\alpha$. Hence, this expression tends to $\infty$ as $M_j\to\infty$ and $h_j\to0$. For the other case $R_j<\Theta_jh_j$,
\begin{align*}
\rho_j^{n-1}h_j\Theta_j^p=cM_j^{p-1}h_j^{\alpha p-(p-1)}.
\end{align*}
Similarly, the exponent of $h_j$ is negative, and this expression tends to $\infty$. Both alternatives contradict the boundedness of $\rho_j^{n-1}h_j\Theta_j^p$. This proves \eqref{formu525}.
\end{proof}

Finally, we prove the estimate for general $2$-subharmonic solutions with positive density bounded above and below, by adapting an approximation argument as in Section 4.
\begin{proposition}\label{pro52}
Let $u\in\Phi^2(B_4)$ and $\mu_2(u)=fdx$ with $f\in L^\infty(B_4)$. Suppose that
\begin{align*}
0<\lambda\leqs f\leqs\Lambda<\infty,\quad\Vert u\Vert_{L^\infty(B_4)}\leqs C_0.
\end{align*}
Then there exists a constant $\alpha=\alpha(n,\lambda,\Lambda)>0$ such that $u\in C^{1,\alpha}(B_1)$ and
\begin{align*}
[Du]_{C^\alpha(B_1)}\leqs C(n,\lambda,\Lambda,\alpha,C_0).
\end{align*}
\end{proposition}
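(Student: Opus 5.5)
We reduce to Proposition \ref{pro51} through the approximation scheme of Section 4. Let $f_j$ be the standard mollifications of $f$ on $B_3$, so that $\lambda\leqs f_j\leqs\Lambda$. Choose smooth boundary data $\varphi_j\to u$ in $C(\partial B_3)$, and let $u_j$ be the smooth $2$-subharmonic solutions of $S_2[u_j]=f_j$ in $B_3$ with $u_j=\varphi_j$ on $\partial B_3$. The comparison estimate of \cite{Trudinger97weak} quoted in Section 4 shows that $u_j\to u$ uniformly on $\overline{B_3}$. In particular $\Vert u_j\Vert_{L^\infty(B_3)}\leqs C_0+1$ for large $j$, and Theorem \ref{thm11} gives a uniform gradient bound $|Du_j|\leqs K(n,\Lambda,C_0)$ on $\overline{B_2}$.

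Next we need the integrability hypothesis of Proposition \ref{pro51}. We apply the argument of Proposition \ref{pro41} to each $u_j$, after rescaling so that the balls fit; concretely, we run the Section 4 construction on a slightly larger ball so that its conclusion covers $B_2$. This yields $\varepsilon=\varepsilon(n,\lambda,\Lambda)>0$ and a bound
\begin{align*}
\int_{B_2}(\Delta u_j)^{1+\varepsilon}\leqs L_0
\end{align*}
that is uniform in $j$. Here $L_0$ depends on $\delta,K_0,C_0$, which are determined by $n,\lambda,\Lambda$ and the uniform $L^\infty$ bound. This is the delicate point. The constants $\delta,l,K_0$ of Section 4 were noted to depend on $u$, but only through $\Vert u\Vert_{L^\infty}$ and the gradient bound $K$: the separation argument of \cite{LW26sigma2} depends on $n,\lambda,K$, the H\"older estimate \eqref{formu41} depends on $K$, and $K_0$ comes from Theorem \ref{thm11}. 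So all of them are controlled by $n,\lambda,\Lambda,C_0$. I would spell this dependence out explicitly, since it is the main obstacle to uniformity.

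Now set $p=1+\varepsilon$ and $\alpha=\frac{p-1}{n+p-1}=\frac{\varepsilon}{n+\varepsilon}$, which depends only on $n,\lambda,\Lambda$. Proposition \ref{pro51} then gives $[Du_j]_{C^\alpha(B_1)}\leqs C(n,\lambda,\Lambda,\alpha,C_0)$ uniformly in $j$; its hypotheses $0<S_2[u_j]\leqs\Lambda$, the $L^\infty$ bound and the $L^p$ Laplacian bound all hold. Together with $|Du_j|\leqs K$, Arzel\`a--Ascoli shows that a subsequence of $Du_j$ converges uniformly on $\overline{B_1}$. Because $u_j\to u$ uniformly, the limit is $Du$. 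Hence $u\in C^{1,\alpha}(B_1)$, and the seminorm bound passes to the limit by pointwise convergence of the difference quotients. Finally, the general statement of Theorem \ref{thm12} on $B(x_0,R)$ with $B(x_0,4R)\subset\Omega$ follows by translation and scaling.
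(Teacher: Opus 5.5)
Your proposal follows the paper's proof: mollify $f$, solve the smooth Dirichlet problems on $B_3$, use the comparison estimate and Theorem \ref{thm11} for uniform $L^\infty$ and Lipschitz bounds, get a uniform $L^{1+\varepsilon}$ bound on $\Delta u_j$ over $B_2$ from Proposition \ref{pro41}, apply Proposition \ref{pro51} with $\alpha\leqs\varepsilon/(n+\varepsilon)$, and pass to the limit. The one step to make precise is the $B_2$ bound: the paper covers $B_2$ with finitely many small balls inside $B_{5/2}$, where the uniform Lipschitz bound holds, and applies Proposition \ref{pro41} on each; a single ``slightly larger ball'' would not fit, because the Section 4 construction needs a domain four times the radius of the ball on which it concludes.
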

\begin{proof}
Let $\{f_j\}\subset C^\infty(B_3)$ be a sequence of mollification of $f$, so that $\lambda\leqs f_j\leqs\Lambda$ and $f_j$ converges to $f$ in $L^p(B_3)$, $1\leqs p<\infty$. As discussed in the beginning of Section 4, the following approximation Dirichlet problem admits a unique smooth solution $u_j$,
\begin{equation*}
\left\{
\begin{array}{cc}
S_2[u_j]=f_j & \text{in }B_3, \\
u_j=\varphi_j  & \text{on }\partial B_3,
\end{array}
\right.
\end{equation*}
where $\varphi_j\in C^\infty(\partial B_3)$ converges to $u$ in $C(\partial B_3)$. Moreover, using the comparison estimate, we have $u_j\to u$ uniformly in $\overline{B_3}$. In particular, $u_j$ has a uniform $L^\infty$ bound in $B_3$.

Applying the interior gradient estimate, we get a uniform Lipschitz bound $K_0$ in $B_{5/2}$. By covering $B_2$ with finitely many balls contained in $B_{5/2}$, we obtain by Proposition \ref{pro41} that there exists a constant $\varepsilon=\varepsilon(n,\lambda,\Lambda)>0$ such that $u_j\in W^{2,1+\varepsilon}(B_2)$ and
\begin{align*}
\int_{B_2}|D^2u_j|^{1+\varepsilon}\leqs L_0,
\end{align*}
where the constant $L_0$ depends on $n,\lambda,\Lambda, \varepsilon$ and $K_0$. Applying Proposition \ref{pro51} yields the uniform gradient seminorm for $u_j$ with a constant $0<\alpha\leqs\varepsilon/(n+\varepsilon)$,
\begin{align*}
[Du_j]_{C^{\alpha}(B_1)}\leqs C(n,\varepsilon, \alpha,\Lambda,C_0,L_0).
\end{align*}
The desired estimate follows by passing $j\to\infty$.
\end{proof}

\section*{Declaration on the use of AI}
During the preparation of this work, ChatGPT was used as an auxiliary tool for exploring mathematical ideas and performing calculations. For the interior gradient estimates, the author considered the $k$-Perron lifting on disjoint shells and reduced the proof to relate the gradient bounds for successive liftings. Then the Lipschitz stability lemma via two-point maximum principle was suggested by GPT. The interior $W^{2,1+\varepsilon}$ estimate was initially proposed with the aid of GPT, and the author simplified the proof and verified its correctness. For the interior $C^{1,\alpha}$ estimate, we were first motivated by the large-trace propagation argument in \cite{CZZ26sigma2}. However, the transverse uniform ellipticity of the linearization equation no longer holds generally, see Remark \ref{rmk51}. This problem was solved with the assistance of GPT, by establishing the uniform ellipticity at upper contact points and using the ABP estimate to achieve the transverse propagation. All mathematical arguments and results were verified by the author, who takes full responsibility for the content of this manuscript.

\small
\bibliographystyle{plain}
\bibliography{cite}

\medskip\medskip
{\em Address and E-mail:}
\medskip

{\em Rongxun He}

{\em School of Mathematical Sciences, Fudan University, Shanghai 200433, China}

{\em rxhe24@m.fudan.edu.cn}

\end{document}